\def\mod#1{\vert #1\vert}
\def\A{\mathbb A}
\def\B{\mathbb B}
\def\R{\mathbb R}
\def\C{\mathbb C}
\def\Z{\mathbb Z}
\def\Q{\mathbb Q}
\def\P{\mathbb P}
\def\F{\mathbb F}
\def\herm#1{\langle #1\rangle}
\newenvironment{proof}{\noindent\normalsize {\sc Proof}:}{{\hfill \rule{1mm}{3mm}}}
\newtheorem{theorem}{Theorem}[section]
\newtheorem{co}{Corollary}[section]
\newtheorem{prop}{Proposition}[section]
\newtheorem{lemma}{Lemma}[section]
\newtheorem{rmk}{Remark}[section]
\title{Moduli of triples of points in quaternionic hyperbolic geometry}
\author{Igor Almeida\ \ \ \ \ \ \ \ \ \ Nikolay Gusevskii\thanks{Corresponding author.}\\
almeida@mat.ufmg.br\ \ \ \ nikolay@mat.ufmg.br\\
\\
Departamento de Matem\'{a}tica\\
Universidade Federal de Minhas Gerais \\
Belo Horizonte -- MG\\
Brazil\\
30123-970}
\date{}
\begin{document}
\maketitle


\begin{abstract}
\noindent In this work, we describe the moduli  of triples of points
in quaternionic projective space which define uniquely the congruence classes of such triples relative to the action of the isometry group of quaternionic hypebolic space ${\rm H^{n}_{\Q}}$.
To solve this problem, we introduce some basic invariants of triples of points in quaternionic hyperbolic geometry.
In particular, we define  quaternionic analogues of the Goldman invariants for mixed configurations of points introduced by him in complex
hyperbolic geometry.
\end{abstract}

\quad {\sl MSC:} 32H20; 20H10; 22E40; 57S30; 32G07; 32C16

\quad {\sl Keywords:} Quaternionic hyperbolic space. Moduli of triples.

\section*{Introduction}

\noindent The  purpose of this paper is to describe some numerical invariants associated to an ordered triple of points in quaternionic projective space.
These invariants describe the equivalence classes of such triples relative to the action of the isometry group of quaternionic hypebolic space ${\rm H^{n}_{\Q}}$.
We  give a construction of the quaternionic angular invariant, an analogue of the Cartan invariant in complex hyperbolic geometry, see \cite{Car}, which parametrizes triples of isotropic points.
Also, we represent a quaternionic analogue of Brehm's shape invariant, see  \cite{Bre}, in complex hyperbolic geometry, which is used to parametrize triples of points in ${\rm H^{n}_{\Q}}$.
Then we define  a quaternionic analogue of the Goldman $\eta$-invariant for mixed configurations of points introduced by him in complex
hyperbolic geometry to study the intersection of bisectors, see \cite{Gol}. Using these invariants, we describe the moduli of the corresponding triples
relative to the action of the isometry group of quaternionic hypebolic space ${\rm H^{n}_{\Q}}$. In order to solve the congruence problems, we use the methods related to Gram matrices of configurations of points
developed in complex hyperbolic geometry in \cite{AGG}, \cite{Bre},  \cite{BrEt},  \cite{CDGT},  \cite{CG1},  \cite{CG2},  \cite{HS1}, \cite{HS2}. In this work, we describe the moduli of all possible configurations of three
points in quaternionic projective space of any dimension and give a geometric interpretation of them.

We remark that some of these problems were considered by Cao, see \cite{Cao}.
Unfortunately, some of the main results of this work are not correct as stated, see Theorem 1.1  (items (ii) and (iii)) in  \cite{Cao}.  We provide the corresponding counter-examples, see Section 2.4.1 and Section 2.4.2. It will be shown that in order to describe the moduli of the corresponding triples we need a quaternionic analogue of the Goldman $\eta$-invariant
for mixed configurations of points.

\medskip

The work is organized as follows. In Section 1,  we summarize some basic results about geometry of quaternionic hyperbolic space. In Section 2, we describe
the moduli of triples of points in quaternionic hyperbolic geometry.

\section{Preliminaries}

\noindent In this section, we recall some basic results  related to quarternions and geometry of projective and hyperbolic spaces.

\subsection{Quaternions}

\noindent First, we recall some basic facts about the quaternions we need. The quaternions $\Q$ are the $\R$-algebra generated by the symbols $i,j,k$ with the relations

$$ l^2=j^2=k^2=-1,  \ \ \  ij=-ji=k, \ \ \  jk=-kj=1, \ \ \  ki=-ik=j.$$

\medskip

So, $\Q$ is a skew field and a 4-dimensional division algebra over the reals.

\medskip

Let $a \in \Q$. We write $a=a_0 + a_1 i + a_2 j + a_3 k$, $a_i \in \R$,
then by definition

$$\bar{a}= a_0 - a_1 i - a_2 j - a_3 k, \ \ \ {\rm Re} \ a=a_0, \ \ \ {\rm Im} \ a=a_1 i + a_2 j + a_3 k.$$

\medskip

Note that, in contrast with the complex numbers, ${\rm Im} \ a$ is not a real number (if $a_i\neq 0$ for some $i=1,2,3$), and that conjugation obeys the rule

$$\overline{ab} = \bar{b}\bar{a}.$$

\medskip

Also, we define $\mod{a}=\sqrt{a\bar{a}}$. We have that if $a\neq 0$ then $a^{-1}=\bar{a}/\mod{a}^2.$

\medskip

In what follows, we will identify the reals  numbers $\R$ with ${\R} 1$ and the complex numbers $\C$ with the subfield of $\Q$ generated over $\R$ by $1$ and $i$.

\medskip

Two quaternions $a$ and $b$ are called {\sl similar} if there exists $\lambda \neq 0$ such that $a=\lambda b \lambda^{-1}.$
By replacing $\lambda$ by $\tau=\lambda/\mod{\lambda}$, we may always assume $\lambda$ to be unitary.

\medskip

The following proposition was proved in \cite{Bren}.

\begin{prop}\label{qsim} Two quaternions $a$ and $b$ are similar if and only if ${\rm Re} \ a ={\rm Re} \ b$ and
$\mod{a} = \mod{b}$. Moreover, every similarity class contains a complex number, unique up to conjugation.
\end{prop}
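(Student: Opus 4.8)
The plan is to prove both implications and then the refinement, the key reduction being that every quaternion is similar to a complex number. For the forward implication, suppose $a=\lambda b\lambda^{-1}$ with $\lambda$ unitary, so that $\lambda^{-1}=\bar\lambda$. The equality of moduli is immediate from multiplicativity of the norm: $\mod{a}=\mod{\lambda}\,\mod{b}\,\mod{\lambda^{-1}}=\mod{b}$. For the real part I would observe that conjugation by a unitary $\lambda$ commutes with quaternionic conjugation, since $\overline{\lambda b\lambda^{-1}}=\overline{\lambda^{-1}}\,\bar b\,\bar\lambda=\lambda\bar b\lambda^{-1}$; adding this to $a=\lambda b\lambda^{-1}$ and using that $2\,{\rm Re}\ b=b+\bar b$ is a central real number fixed by conjugation by $\lambda$ gives ${\rm Re}\ a={\rm Re}\ b$.

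The heart of the argument is the claim that every similarity class contains a complex number. Write $a=a_0+{\rm Im}\ a$, where ${\rm Im}\ a$ is a pure imaginary quaternion. Since conjugation by $\lambda$ fixes the central real part $a_0$, it suffices to rotate the pure imaginary part onto the $i$-axis: concretely, to show that for any pure imaginary $u$ there is a unitary $\lambda$ with $\lambda u\lambda^{-1}=\mod{u}\,i$. This is exactly the statement that the unit quaternions act transitively, by conjugation, on each sphere $\{u:\ {\rm Re}\ u=0,\ \mod{u}=r\}$ of pure imaginary quaternions of fixed norm. I expect this transitivity to be the main obstacle, and I would establish it either by invoking the standard fact that conjugation by unit quaternions realizes the double cover ${\rm Sp}(1)\to{\rm SO}(3)$, whose image is all rotations of $\R^3={\rm Im}\ \Q$, or by exhibiting an explicit unitary $\lambda$ carrying a given unit vector to $i$. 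Applying such a $\lambda$ yields $\lambda a\lambda^{-1}=a_0+\mod{{\rm Im}\ a}\,i\in\C$.

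For the reverse implication, assume ${\rm Re}\ a={\rm Re}\ b$ and $\mod{a}=\mod{b}$. From $\mod{a}^2=a_0^2+\mod{{\rm Im}\ a}^2$ and the analogous identity for $b$, equality of real parts and of moduli forces $\mod{{\rm Im}\ a}=\mod{{\rm Im}\ b}$. By the previous paragraph $a$ and $b$ are similar to $a_0+\mod{{\rm Im}\ a}\,i$ and $b_0+\mod{{\rm Im}\ b}\,i$ respectively, and these two complex numbers coincide; since similarity is an equivalence relation, $a$ and $b$ are similar. Finally, for uniqueness up to conjugation, suppose two complex numbers $z=x+yi$ and $w=x'+y'i$ (with $x,y,x',y'\in\R$) lie in the same class; the already-proved forward implication gives $x={\rm Re}\ z={\rm Re}\ w=x'$ and $x^2+y^2=\mod{z}^2=\mod{w}^2=x'^2+y'^2$, whence $y'=\pm y$, that is $w=z$ or $w=\bar z$. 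This completes the plan.
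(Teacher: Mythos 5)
Your argument is correct, but note that the paper offers no proof of this proposition at all: it is quoted as a known result of Brenner and simply cited, so your self-contained derivation is necessarily a different route. All the steps check out: multiplicativity of the norm gives $\mod{a}=\mod{b}$; the identity $\overline{\lambda b\lambda^{-1}}=\lambda\bar b\lambda^{-1}$ for unitary $\lambda$ (or, even more directly, ${\rm Re}(xy)={\rm Re}(yx)$) gives ${\rm Re}\ a={\rm Re}\ b$; and the reverse implication plus uniqueness up to conjugation follow formally once every class is known to contain a complex number. The one step you leave as a placeholder -- transitivity of conjugation by unit quaternions on each sphere of pure imaginary quaternions of fixed norm -- is genuinely the crux, and while the ${\rm Sp}(1)\to{\rm SO}(3)$ argument is standard, it can be closed in one line without any Lie theory: for unit pure imaginary $u,v$ with $u\neq -v$, set $\lambda=u+v$; then $\lambda u=(u+v)u=-1+vu=v(u+v)=v\lambda$, so $\lambda u\lambda^{-1}=v$, and the excluded case $v=-u$ is handled by composing two such conjugations. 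Inserting this makes your proof complete and fully elementary, which is arguably preferable to the paper's bare citation.
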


\begin{co} \label{qua1} Any quaternion $a$ is similar to a unique complex number $b=b_0 +b_1 i $ such that $b_1 \geq 0.$
\end{co}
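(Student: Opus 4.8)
The plan is to deduce this directly from Proposition~\ref{qsim}, handling existence and uniqueness separately. The statement is essentially a normalization of the ``unique up to conjugation'' clause, so the work is to pin down exactly what that clause allows and then eliminate the spurious sign.

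For existence, Proposition~\ref{qsim} already guarantees that the similarity class of $a$ contains a complex number $z = b_0 + b_1 i$. If $b_1 \geq 0$, there is nothing more to do. Otherwise I would pass to $\bar{z} = b_0 - b_1 i$, whose imaginary part $-b_1$ is then positive. The key observation is that $z$ and $\bar{z}$ are themselves similar: they have the same real part $b_0$ and the same modulus $\sqrt{b_0^2 + b_1^2}$, so Proposition~\ref{qsim} applies to the pair $(z,\bar{z})$. Since similarity is an equivalence relation, transitivity gives that $a$ is similar to $\bar{z}$, and $\bar{z}$ is the required representative with non-negative imaginary part. One can also exhibit a conjugating unitary explicitly, for instance $j z j^{-1} = \bar{z}$ for $z \in \C$, but this is not needed.

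For uniqueness, suppose $b = b_0 + b_1 i$ and $c = c_0 + c_1 i$ with $b_1, c_1 \geq 0$ are both similar to $a$. By transitivity $b$ and $c$ are similar, so the final clause of Proposition~\ref{qsim} forces either $c = b$ or $c = \bar{b}$. In the first case we are done. In the second case $c_0 = b_0$ and $c_1 = -b_1$; combined with the hypotheses $b_1 \geq 0$ and $c_1 \geq 0$, this forces $b_1 = c_1 = 0$, and hence again $c = b$. This establishes uniqueness.

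I do not anticipate a genuine obstacle: the only point requiring a moment's care is recognizing that the two complex representatives of a similarity class form a conjugate pair $\{z, \bar{z}\}$, so that imposing $b_1 \geq 0$ singles out exactly one of them, with the degenerate case $b_1 = 0$ yielding a single real representative.
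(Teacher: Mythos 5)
Your proof is correct and follows exactly the route the paper intends: the corollary is stated as an immediate consequence of Proposition~\ref{qsim} (the paper gives no separate argument), and your existence-via-conjugation and uniqueness-from-the-conjugate-pair steps are precisely the routine verification of that deduction.
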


Also, this proposition implies that every quaternion is similar to its conjugate.

\medskip

We say that $a\in \Q$ is {\sl purely imaginary} if ${\rm Re}(a)=0 $. Let us suppose that $a$ is purely imaginary and $|a|=1$. Then $a^2=-1$. This implies that the real span of $1$ and $a$ is a subfield of $\Q$ isomorphic
to the field of complex numbers. We denote this subfield by $\C(a)$. It is easy to prove that any subfield of $\Q$ containing real numbers and isomorphic to the field of complex numbers is of the form $\C(a)$
for some $a$ purely imaginary with $|a|=1$.

\medskip

The following was proved in \cite{CGr}.

\medskip

\begin{prop}\label{qua2}Let $a$ be as above. Then the centralizer of $a$ is $\C(a)$.
\end{prop}

\medskip

More generally, for any $\lambda \in \Q$, let $\C(\lambda)$ also denote the real span of $1$ and $\lambda$.

\begin{prop} \label{qua3} Let $\lambda \in \Q \setminus \R$. Then the centralizer of $\lambda$ is $\C(\lambda)$.
\end{prop}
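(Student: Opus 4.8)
The plan is to reduce this statement to Proposition~\ref{qua2}, which already handles the case of a purely imaginary quaternion of unit norm. Since $\lambda \in \Q \setminus \R$, its imaginary part $\mathrm{Im}(\lambda)$ is nonzero, so I would first write $\lambda = \mathrm{Re}(\lambda) + \mathrm{Im}(\lambda)$ and set $a = \mathrm{Im}(\lambda)/\mod{\mathrm{Im}(\lambda)}$. By construction $a$ is purely imaginary with $\mod{a}=1$, and
$$\lambda = \mathrm{Re}(\lambda) + \mod{\mathrm{Im}(\lambda)}\, a,$$
so $\lambda$ lies in the real span of $1$ and $a$; conversely $a$ lies in the real span of $1$ and $\lambda$. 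Hence the real spans coincide, i.e. $\C(\lambda) = \C(a)$.

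Next I would compare the two centralizers. The key observation is that $\mathrm{Re}(\lambda)$, being identified with an element of $\R\cdot 1$, is central in $\Q$, and that $\mod{\mathrm{Im}(\lambda)}$ is a nonzero real scalar. Therefore, for any $x \in \Q$, the commutator $x\lambda - \lambda x$ equals $\mod{\mathrm{Im}(\lambda)}\,(xa - ax)$, which shows that $x$ commutes with $\lambda$ if and only if $x$ commutes with $a$. Consequently the centralizer of $\lambda$ coincides with the centralizer of $a$.

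Finally, I would invoke Proposition~\ref{qua2}, which gives that the centralizer of $a$ is $\C(a)$. Combining this with the two equalities established above, the centralizer of $\lambda$ equals $\C(a) = \C(\lambda)$, as claimed.

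I do not anticipate a genuine obstacle here, since the argument is a normalization followed by a direct application of the previous proposition; the only point requiring care is the verification that passing from $\lambda$ to its normalized imaginary part $a$ affects neither the relevant real span nor the centralizer, which rests entirely on the centrality of the real part and the invertibility of the real scalar $\mod{\mathrm{Im}(\lambda)}$.
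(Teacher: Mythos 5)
Your proof is correct. The paper states Proposition~\ref{qua3} without any proof (it is presented as an immediate generalization of Proposition~\ref{qua2}, which is itself cited from the literature), and your reduction --- normalizing to $a = \mathrm{Im}(\lambda)/\vert\mathrm{Im}(\lambda)\vert$, checking that $\C(\lambda)=\C(a)$ and that the commutators $x\lambda-\lambda x$ and $xa-ax$ differ by the nonzero real factor $\vert\mathrm{Im}(\lambda)\vert$, then invoking Proposition~\ref{qua2} --- is exactly the standard argument the authors evidently have in mind.
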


\subsection{Hyperbolic spaces}

\noindent In this section, we discuss two models for the hyperbolic spaces, its isometry group and totally geodesics submanifolds.

\subsubsection{Projective Model}

\noindent We denote by $\F$ one of the real division algebras $\R$, $\C$, or $\Q$.
Let us write $\F ^{n+1}$ for a right $\F$ - vector space of dimension $n+1$.
The $\F$- projective space $\P\F^{n}$ is the manifold of right $\F$-lines in
$\F^{n+1}$. Let $\pi$ denote a natural projection
from $\F^{n+1} \setminus \{0\}$ to the projective space $\P\F^{n}$.

\medskip

Let $\F^{n,1}$ denote a $(n+1)$-dimensional $\F$-vector space
equipped with a Hermitian form $\Psi= \herm{-,-}$ of signature $(n,1)$.
Then there exists a (right) basis in $\F ^{n+1}$ such that the Hermitian
product is given by
$\herm{v,w}=v^*J_{n+1}w,$ where $v^*$ is the Hermitian transpose of
$v$ and  $J_{n+1}=(a_{ij})$ is the $(n+1)\times (n+1)$-matrix with
$a_{ij}=0$ for all $i \neq j$, $a_{ii}=1$ for all $i=1, \ldots, n$,
and $a_{ii}=-1$ when $i=n+1$.

\medskip

That is,
$$\herm{v,w}=\bar{v}_1 w_1+\ldots +  \bar{v}_nw_n -\bar{v}_{n+1}w_{n+1},$$
where $v_i$ and $w_i$ are coordinates of $v$ and $w$ in this basis. We call such a basis in $\F^{n,1}$ an {\sl orthogonal basis} defined by
a Hermitian form $\Psi= \herm{-,-}$.

\medskip

Let $V_{-}, V_0, V_{+}$ be the subsets of $\F^{n,1} \setminus \{0\}$
consisting of vectors where $\herm{v,v}$ is negative, zero, or
positive respectively. Vectors in $V_0$ are called {\sl null} or
{\sl isotropic}, vectors in $V_{-}$ are called {\sl negative}, an
vectors in  $V_{+}$ are called {\sl positive}. Their projections to $\P\F^{n}$
 are called {\sl isotropic}, {\sl negative}, and {\sl
positive} points respectively.

\medskip

The projective model of {\sl  hyperbolic space} ${\rm H^{n}_{\F}}$ is
the set of negative points in  $\P\F^{n}$, that is,
${\rm H^{n}_{\F}}=\pi(V_{-}).$

\vspace{2mm}

We will consider ${\rm H^{n}_{\F}}$ equipped with the Bergman metric \cite{CGr}:

$$d(p,q)=\cosh^{-1}\{|\Psi(v,w)| [\Psi(v,v)\Psi(w,w)]^{-1/2}\},$$

where $p,q \in{\rm H^{n}_{\F}}$, and $\pi(v)=p, \pi(w)=q$.

\medskip

The boundary $\partial
{\rm H^{n}_{\F}}=\pi(V_{0})$ of ${\rm H^{n}_{\F}}$  is the sphere formed by
all isotropic  points.

\medskip

Let ${\rm U}(n,1; \F)$ be the unitary group
corresponding to this Hermitian form $\Phi$. If $g \in {\rm U}(n,1; \F)$, then
$g(V_{-})=V_{-}$ and $g(v\lambda)=(g(v))\lambda$, for all $\lambda \in
\F$. Therefore ${\rm U}(n,1; \F)$ acts in $\P\F^{n}$, leaving ${\rm H^{n}_{\F}}$
invariant.

\medskip

The group ${\rm U}(n,1; \F)$ does not act effectively in ${\rm H^{n}_{\F}}$. The kernel of this
action is the center ${\rm Z}(n,1; \F)$. Thus, the projective group ${\rm PU}(n,1; \F)={\rm U}(n,1; \F)/{\rm Z}(n,1; \F)$ acts
effectively. The center $\Z(n,1, \F)$ in ${\rm U}(n,1; \F)$ is $\{\pm {\rm E}\}$ if $\F=\R$ or $\Q$, and is the circle group $\{\lambda {\rm E} : \mod \lambda =1\}$
if $\F=\C$. Here ${\rm E}$ is the identity  transformation of  $\F^{n,1}$.

\medskip

It is well-known, see for instance \cite{CGr}, that ${\rm PU}(n,1; \F)$
acts transitively in ${\rm H^{n}_{\F}}$ and doubly transitively on
$\partial{\rm H^{n}_{\F}}$.

\medskip

We remark that
\begin{itemize}
\item if $\F=\R$ then  ${\rm H^{n}_{\F}}$ is a real hyperbolic space ${\rm H^{n}_{\R}}$,

\item if $\F=\C$ then  ${\rm H^{n}_{\F}}$ is a complex hyperbolic space ${\rm H^{n}_{\C}}$,

\item if $\F=\Q$ then  ${\rm H^{n}_{\F}}$ is a quaternionic  hyperbolic space ${\rm H^{n}_{\Q}}$.
\end{itemize}

\medskip

It is easy to show \cite{CGr} that ${\rm H^{1}_{\Q}}$ is isometric to ${\rm H^{4}_{\R}}$.

\subsubsection{The ball model}\label{ball}

\noindent In this section, we consider the space $\F^{n,1}$ equipped by an orthogonal basis
$$e=\{e_1, \ldots, e_n, e_{n+1} \}.$$

For any $v \in \F^{n,1}$, we write $v=(z_1, \ldots, z_n, z_{n+1})$, where $z_i$, $i=1,\ldots, n+1$ are coordinates
of $v$ in this basis.

\medskip

If $v=(z_1, \ldots, z_n, z_{n+1}) \in V_{-}$, the condition $ \herm{v,v} < 0$ implies that $z_{n+1} \neq 0.$ Therefore, we may define a set of coordinates $w=(w_1, \ldots, w_n)$ in ${\rm H^{n}_{\F}}$ by $w_i(\pi(z))=z_i z_{n+1}^{-1}.$
In this way  ${\rm H^{n}_{\F}}$ becomes identified with the ball

$$\B=\B(\F)= \{w=(w_1, \ldots, w_n) \in \F^n : \Sigma_{i=1}^n |w_i|^2 < 1\}.$$

\medskip

With this identification the map $\pi: V_{-} \rightarrow {\rm H^{n}_{\F}}$ has the coordinate representation $\pi(z)=w$, where
$w_i= z_i z_{n+1}^{-1}$.

\subsubsection{Totally geodesic submanifolds}\label{tgsm}

We will need the following result, see \cite{CGr}, which describes all totally
geodesic submanifolds of ${\rm H^{n}_{\F}}$.

\medskip

Let $F$ be a subfield of $\F$.  An $F$-{\sl unitary subspace} of $\F^{n,1}$ is an $F$-subspace of $\F^{n+1}$
in which the Hermitian form $\Phi$ is $F$-valued. An $F$-{\sl hyperbolic subspace} of $\F^{n,1}$ is an $F$-unitary subspace in which
the Hermitian form $\Phi$ is non-degenerate and indefinite.

\medskip

\begin{prop}\label{ptgsm}
Let $M$ be a totally geodesic submanifold of ${\rm H^{n}_{\F}}$. Then either

\vspace{2mm}

(a) $M$ is the intersection of the projectivization of an $F$-hyperbolic subspace of $\F^{n,1}$ with ${\rm H^{n}_{\F}}$ for some
subfield $F$ of $\F$, or

\vspace{2mm}

(b) $\F=\Q$, and $M$ is a 3-dimensional  totally geodesic submanifold of a totally geodesic quaternionic line  ${\rm H^{1}_{\Q}}$  in ${\rm H^{n}_{\Q}}$.
\end{prop}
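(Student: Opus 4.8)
The plan is to reduce the statement to a linear-algebraic question at a single point and then classify the resulting curvature-invariant subspaces. Since ${\rm PU}(n,1;\F)$ acts transitively on ${\rm H}^{n}_{\F}$, I may assume that $M$ passes through the origin $o$ of the ball model $\B$, represented by a negative vector $v_0\in V_-$. A connected totally geodesic submanifold through $o$ is determined by its tangent space $W=T_oM\subseteq T_o{\rm H}^{n}_{\F}\cong\F^{n}$, and the classical criterion is that $M$ is totally geodesic if and only if $W$ is a Lie triple system, i.e. $R(X,Y)Z\in W$ for all $X,Y,Z\in W$, where $R$ is the curvature tensor of the Bergman metric. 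Thus the whole statement reduces to classifying the real subspaces $W\subseteq\F^{n}$ closed under $R$, and then reconstructing an $F$-hyperbolic subspace from each.

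First I would record the curvature tensor at $o$. Writing $g={\rm Re}\,\Phi$ for the real inner product, and letting $\{R_\nu\}$ denote the structures on $\F^{n}$ induced by the imaginary units $\nu$ (a single $R_i$ when $\F=\C$, the full triple when $\F=\Q$), the curvature has the shape of a constant-curvature term plus corrections built from the K\"ahler-type forms $\omega_\nu(X,Y)=g(X,R_\nu Y)$; schematically the correction to $R(X,Y)Z$ is a combination of the vectors $R_\nu X,R_\nu Y,R_\nu Z$ weighted by the $\omega_\nu$. When $\F=\R$ there are no corrections, the metric has constant curvature, every $W$ is a Lie triple system, and each spans an $\R$-hyperbolic subspace, so (a) holds with $F=\R$. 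For $\F=\C,\Q$ the constant-curvature part $g(Y,Z)X-g(X,Z)Y$ always keeps $R(X,Y)Z$ inside $W$, so being a Lie triple system is equivalent to the correction terms lying in $W$, and this forces a compatibility between $W$ and the structures $R_\nu$.

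The heart of the argument is the resulting trichotomy. I would analyze the forms $\omega_\nu$ restricted to $W$ together with the set $\{\nu:R_\nu W\subseteq W\}$ of imaginary units preserving $W$, and show that, up to the isotropy action, a Lie triple system is one of: totally real ($\Phi|_W$ real-valued, so $W$ spans an $\R$-hyperbolic subspace and $M={\rm H}^{k}_{\R}$); complex ($R_\nu W=W$ for a single $\nu$ with $\Phi|_W$ taking values in $\C(\nu)$, giving $M={\rm H}^{k}_{\C}$); or quaternionic ($R_\nu W=W$ for all $\nu$, giving $M={\rm H}^{k}_{\Q}$). In each case the subfield $F\in\{\R,\C,\Q\}$ together with $U={\rm span}_F\bigl(v_0,\widetilde W\bigr)$, where $\widetilde W$ is a lift of $W$, satisfies $\pi(\P U)\cap{\rm H}^{n}_{\F}=M$; verifying that $\Phi|_U$ is $F$-valued, nondegenerate and indefinite is routine from the compatibility conditions, and yields alternative (a).

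The main obstacle, and the source of alternative (b), is precisely the failure of this trichotomy inside a single quaternionic line. On a quaternionic line ${\rm H}^{1}_{\Q}$ the Bergman metric has constant curvature (indeed ${\rm H}^{1}_{\Q}$ is isometric to ${\rm H}^{4}_{\R}$), so the correction terms no longer constrain $W$ and every real subspace of the four-dimensional tangent space is a Lie triple system. The even-dimensional ones are again totally real, complex, or quaternionic and fall under (a); but a three-dimensional real subspace $W\subseteq\Q$ is excluded from (a) on both counts: no $R_\nu$ can preserve an odd-dimensional subspace, so $W$ is neither complex nor quaternionic, while a curvature comparison rules out a totally real form, since a totally real ${\rm H}^{3}_{\R}$ and a quaternionic line realize the two extreme values of the pinched sectional curvature of ${\rm H}^{n}_{\Q}$, so the former cannot sit totally geodesically inside the latter. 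These three-dimensional submanifolds are exactly the ${\rm H}^{3}_{\R}$ of alternative (b). The crux of the write-up is therefore to prove that this degeneration occurs only within a quaternionic line, equivalently that for $n\ge 2$ a Lie triple system not contained in a single quaternionic line is forced by the nondegenerate correction terms into the trichotomy above; this pins down the dichotomy (a)/(b).
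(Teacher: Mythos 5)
The paper gives no proof of this proposition: it is quoted verbatim from Chen and Greenberg \cite{CGr}, so there is no internal argument to compare yours against. Your route --- use transitivity to place $M$ through a base point, invoke the correspondence between complete totally geodesic submanifolds of a symmetric space through a point and Lie triple systems in its tangent space, write down the curvature tensor of the Bergman metric, and classify the real subspaces $W$ closed under it --- is in substance the argument of \cite{CGr}; you have reconstructed the right strategy rather than found a different one. The individual observations you make along the way are sound: the constant-curvature case $\F=\R$ is immediate; $R_\nu^2=-1$ forces any $R_\nu$-invariant subspace to be even-dimensional, so the $3$-dimensional subspaces are neither complex nor quaternionic; and the curvature comparison (a totally real ${\rm H^{3}_{\R}}$ and a quaternionic line sit at opposite ends of the pinching) correctly shows the case (b) submanifolds cannot be realized as in (a).

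The gap is that the two statements carrying all the weight are announced, not proved. First, the trichotomy for Lie triple systems not contained in a quaternionic line (totally real with $\Phi|_W$ real-valued, complex for a single $\nu$, or quaternionic) is asserted ``up to the isotropy action'' with no argument; this is precisely where the linear algebra lives --- one must show that the correction terms $\omega_\nu(X,Y)R_\nu Z$, $\omega_\nu(Y,Z)R_\nu X$, $\omega_\nu(X,Z)R_\nu Y$ lying in $W$ for all $X,Y,Z\in W$ forces the set $\{\nu : R_\nu W\subseteq W\}$ and the restricted forms $\omega_\nu|_W$ into exactly the three patterns you list, and that each pattern makes $\Phi$ take values in the corresponding subfield on the lifted span, so that $U={\rm span}_F(v_0,\widetilde W)$ is genuinely an $F$-hyperbolic subspace. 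Second, you explicitly defer what you yourself call the crux: that the degeneration to arbitrary subspaces happens only when $W$ lies in a single quaternionic tangent line, i.e.\ that for $W$ meeting two distinct quaternionic lines the curvature terms are nondegenerate enough to enforce the trichotomy. Without these two steps the proposal is a correct and well-aimed outline of the known proof, but not yet a proof.
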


\medskip

From the last proposition follows that

\begin{itemize}
\item in the real hyperbolic space ${\rm H^{n}_{\R}}$ any totally geodesic submanifold  is isometric to ${\rm H^{k}_{\R}}, \ k=1, \ldots , n,$

\item in the  complex hyperbolic space ${\rm H^{n}_{\C}} $ any totally geodesic submanifod  is isometric to ${\rm H^{k}_{\C}},$ or to
${\rm H^{k}_{\R}},$ $\ k=1, \ldots , n,$

\item in the  quaternionic  hyperbolic space ${\rm H^{n}_{\Q}}$ any totally geodesic submanifold  is isometric to ${\rm H^{k}_{\Q}},$
 or to ${\rm H^{k}_{\C}},$ or to ${\rm H^{k}_{\R}},$
$k=1, \ldots , n,$ or to a 3-dimensional  totally geodesic submanifold of a totally geodesic quaternionic line  ${\rm H^{1}_{\Q}}$.
\end{itemize}

\medskip

In what follows we will use the following terminology:

\begin{itemize}
\item A totally geodesic submanifold of ${\rm H^{n}_{\Q}}$ isometric to ${\rm H^{1}_{\Q}}$ is called a {\sl quaternionic geodesic}.
\item A totally geodesic submanifold of ${\rm H^{n}_{\Q}}$ isometric to ${\rm H^{1}_{\C}}$ is called a {\sl  complex geodesic}.
\item A totally geodesic submanifold of ${\rm H^{n}_{\Q}}$ isometric to  ${\rm H^{2}_{\R}}$ is called a {\sl real plane}.
\end{itemize}

\medskip

It is clear that two distinct points in ${\rm H^{n}_{\Q}} \cup \partial
{\rm H^{n}_{\F}}$  span a unique quaternionic geodesic. We also remark that any $2$-dimensional totally geodesic submanifold of a totally geodesic
quaternionic line  ${\rm H^{1}_{\Q}}$ is isometric to ${\rm H^{1}_{\C}}$.

\begin{prop}
Let $V$ be a subspace of $\F^{n,1}$. Then each linear isometry of $V$ into $\F^{n,1}$ can be extended to an element of ${\rm U}(n,1; \F)$.
\end{prop}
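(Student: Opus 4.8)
The statement is Witt's extension theorem for the Hermitian form $\Psi$ over the (possibly noncommutative) division algebra $\F$. The plan is to prove it by induction on $\dim_{\F}V$, carried out not only for the ambient space $\F^{n,1}$ but for an arbitrary nondegenerate Hermitian $\F$-space $E$ (writing ${\rm U}(E)$ for its unitary group); this extra generality is what lets the inductive hypothesis be applied to orthogonal complements, which are themselves nondegenerate and of strictly smaller dimension. Throughout I will use that, since $\Psi$ is Hermitian, $\herm{v,v}=\overline{\herm{v,v}}$ is a real number for every $v$, so ``norms'' behave as in the real case even when $\F=\Q$.

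The inductive engine is the following reduction. Let $\phi\colon V\to E$ be a linear isometry and suppose $\dim V\geq 1$. Choose $0\neq u\in V$ and put $u'=\phi(u)$; then $\herm{u',u'}=\herm{u,u}$. Suppose first I can produce $\tau\in{\rm U}(E)$ with $\tau(u)=u'$. Then $\psi=\tau^{-1}\circ\phi$ is an isometry of $V$ fixing $u$, hence it maps $W=V\cap\langle u\rangle^{\perp}$ into $\langle u\rangle^{\perp}$. When $u$ is anisotropic one has the orthogonal splitting $E=\langle u\rangle\perp\langle u\rangle^{\perp}$ with $\langle u\rangle^{\perp}$ nondegenerate, $V=\langle u\rangle\oplus W$, and $\dim W=\dim V-1$; applying the inductive hypothesis to $\psi|_{W}\colon W\to\langle u\rangle^{\perp}$ gives an isometry $h$ of $\langle u\rangle^{\perp}$, and then $g=\tau\circ(\mathrm{id}_{\langle u\rangle}\oplus h)$ is the desired extension of $\phi$.

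So the crux is the reduction lemma: if $\herm{u,u}=\herm{u',u'}$, there is $\tau\in{\rm U}(E)$ with $\tau(u)=u'$. When this common value is nonzero I would build $\tau$ from a reflection $R_{a}(x)=x-2a\,\herm{a,a}^{-1}\herm{a,x}$ in an anisotropic vector $a$; the reality of $\herm{a,a}$ makes $R_{a}$ a well-defined involutive isometry of the right $\F$-space (one checks $R_a(a)=-a$ and $R_a$ fixes $\langle a\rangle^{\perp}$). A short computation gives $\herm{u-u',u-u'}=2\herm{u,u}-2\,{\rm Re}\,\herm{u,u'}$ and $\herm{u+u',u+u'}=2\herm{u,u}+2\,{\rm Re}\,\herm{u,u'}$, so at least one of $u\pm u'$ is anisotropic. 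Here the genuine difficulty of the noncommutative case appears: unlike over $\R$, the vector $u-u'$ need not be orthogonal to $u+u'$, since $\herm{u,u'}-\herm{u',u}=2\,{\rm Im}\,\herm{u,u'}$ may be nonzero, and a single reflection then fails to interchange $u$ and $u'$. I would remove this obstruction by first replacing $u'$ with $u'\mu$ for a suitable unit quaternion $\mu$ chosen so that $\herm{u,u'\mu}$ becomes real --- this is exactly where Propositions \ref{qsim}--\ref{qua3} on similarity classes and centralizers in $\Q$ enter --- and then compensating for the phase $\mu$ by a rotation supported on the quaternionic line through $u'$, which lies in ${\rm U}(E)$ and fixes $\langle u'\rangle^{\perp}$ pointwise.

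The remaining, and in my view most delicate, point is the isotropic case, both inside the lemma (when $\herm{u,u}=0$) and in the recursion (when $u$ is isotropic, $\langle u\rangle^{\perp}$ is degenerate, so the clean orthogonal splitting above breaks down). I would treat it by the hyperbolic-pair technique: since $E$ is nondegenerate I can find an isotropic $y$ with $\herm{u,y}=1$, so that $\langle u,y\rangle$ is a hyperbolic plane, and likewise an isotropic $y'$ with $\herm{u',y'}=1$; mapping the pair $(u,y)$ to $(u',y')$ defines an isometry of the two planes, which by the already-settled nondegenerate case extends to $\tau\in{\rm U}(E)$ with $\tau(u)=u'$. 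To run the induction past the isotropic radical $V\cap V^{\perp}$ of a degenerate $V$, I would first split off and handle a nondegenerate complement of the radical in $V$, then enlarge the totally isotropic radical to an orthogonal sum of hyperbolic planes and match these planes under $\phi$, reducing again to the nondegenerate situation. The main obstacle to watch throughout is the noncommutativity of $\Q$: it forces the phase correction in the reflection step and demands care that every auxiliary map is genuinely right-$\F$-linear and form-preserving; but the reality of $\herm{v,v}$ keeps the signature bookkeeping (and hence the Witt cancellation that falls out of the same induction) exactly parallel to the classical real and complex cases.
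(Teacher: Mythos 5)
Your argument is correct in outline, but it does far more work than the paper itself: the paper's entire ``proof'' of this proposition is the one sentence that it is a particular case of the Witt theorem, with a citation to Scharlau \cite{Sch}. What you have written is, in effect, a sketch of the standard proof of that cited theorem, specialized to $\F\in\{\R,\C,\Q\}$: induction on $\dim V$ carried out over arbitrary nondegenerate Hermitian spaces, the reflection $R_a$, the observation that at least one of $u\pm u'$ is anisotropic, the hyperbolic-pair device for isotropic vectors, and the enlargement of the radical for degenerate $V$. Your identification of the genuinely noncommutative wrinkle --- that $\herm{u,u'}$ need not be real, so a single reflection fails to swap $u$ and $u'$, and one must first twist $u'$ by a unit quaternion $\mu$ with $\herm{u,u'}\mu\in\R$ and then undo the twist by the unitary map equal to $u'\mapsto u'\mu^{-1}$ on $\langle u'\rangle$ and the identity on $\langle u'\rangle^{\perp}$ --- is exactly right, and it is the point most often glossed over when Witt's theorem is quoted for $\Q$. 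The trade-off is clear: the paper's citation is shorter and inherits the full generality of \cite{Sch}, while your route is self-contained and makes visible precisely where the quaternionic structure matters. Before your sketch could stand as a complete proof, two places need fleshing out: the degenerate case (one must check that the radical of $V$ can be completed to hyperbolic planes inside $\F^{n,1}$ compatibly on both the source and the target side, using nondegeneracy of the ambient form), and the bookkeeping when $u+u'$ rather than $u-u'$ is the anisotropic vector, where the reflection produces $-u'$ and a further sign correction supported on $\langle u'\rangle$ is required; both are routine.
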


\medskip

This is a particular case of the Witt theorem, see \cite{Sch}.

\medskip

\begin{co}
Let $S \subset {\rm H^{n}_{\F}}$ be a totally geodesic submanifold. Then each linear isometry of $S$ into ${\rm H^{n}_{\F}}$ can be extended to an
element of the isometry group of ${\rm H^{n}_{\F}}$.
\end{co}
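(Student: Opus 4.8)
The plan is to transfer the statement from the submanifold $S$ to the Hermitian subspace of $\F^{n,1}$ carrying it, and then to invoke the preceding Proposition (the special case of Witt's theorem for $\F^{n,1}$). First I would use Proposition \ref{ptgsm} to describe $S$ concretely. In case (a), there is an $F$-hyperbolic subspace $V \subset \F^{n,1}$ with $S = \pi(V) \cap {\rm H^{n}_{\F}}$, and the Bergman metric on $S$ is exactly the metric induced by the restriction of the Hermitian form $\Phi$ to $V$, via the formula $d(p,q)=\cosh^{-1}\{|\Phi(v,w)|\,[\Phi(v,v)\Phi(w,w)]^{-1/2}\}$. Thus $S$, as a metric space, is completely encoded by the pair $(V,\Phi|_V)$.

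Next I would read a linear isometry $f$ of $S$ into ${\rm H^{n}_{\F}}$ as being induced by an $\F$-linear isometry $L\colon V \to \F^{n,1}$ onto its image $V' = L(V)$, that is, an $\F$-linear map preserving $\Phi|_V$ and satisfying $f \circ \pi = \pi \circ L$ on $V \cap V_{-}$. This is the crux of the matter: the distance formula above shows that an $\F$-linear map of the underlying subspaces descends to a Bergman isometry of the associated totally geodesic submanifolds \emph{precisely} when it preserves $\Phi|_V$, so the word ``linear'' in the hypothesis is exactly what guarantees that $f$ is the projectivization of such an $L$ (and sidesteps the field-automorphism ambiguity that would otherwise enter in the complex and quaternionic settings).

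Once $L$ is in hand the conclusion is immediate. By the preceding Proposition, $L$ extends to an element $g \in {\rm U}(n,1;\F)$. Passing to the quotient, $g$ determines an element $[g]$ of ${\rm PU}(n,1;\F) \subset \isom{{\rm H^{n}_{\F}}}$ acting on ${\rm H^{n}_{\F}}$; since $g$ extends $L$ we get $[g]|_S = f$, so $f$ extends to a global isometry.

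The remaining point, which I expect to be the main obstacle, is the exceptional case (b) of Proposition \ref{ptgsm}: $\F = \Q$ and $S$ is a $3$-dimensional totally geodesic submanifold of a quaternionic geodesic ${\rm H^{1}_{\Q}}$. Here $S$ is \emph{not} the projectivization of a $\Q$-hyperbolic subspace of $\Q^{n,1}$, so the argument above does not apply verbatim. I would treat it in two stages. The ambient quaternionic geodesic ${\rm H^{1}_{\Q}}$ is the projectivization of a $2$-dimensional $\Q$-hyperbolic subspace $W \subset \Q^{n,1}$, and under the isometry ${\rm H^{1}_{\Q}} \cong {\rm H^{4}_{\R}}$ the submanifold $S$ corresponds to a totally geodesic ${\rm H^{3}_{\R}} \subset {\rm H^{4}_{\R}}$. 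First I would extend $f$ to an isometry of the ambient ${\rm H^{1}_{\Q}}$, applying the $\F = \R$ case of the Proposition inside the real form realizing ${\rm H^{4}_{\R}}$; then I would extend that isometry of ${\rm H^{1}_{\Q}}$ to all of ${\rm H^{n}_{\Q}}$ by applying the Proposition to the $\Q$-hyperbolic subspace $W$. Composing the two extensions yields the desired element of $\isom{{\rm H^{n}_{\Q}}}$ restricting to $f$ on $S$.
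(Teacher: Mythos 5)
Your overall strategy---lift the isometry of $S$ to a form-preserving linear map of the underlying subspace of $\F^{n,1}$ and extend it via the preceding Proposition (Witt's theorem)---is exactly the argument the paper intends; the paper in fact states this Corollary with no written proof, treating it as immediate from the Proposition. For case (a) your reduction is essentially right, but note that for a subfield $F\subsetneq\F$ (a real plane in ${\rm H^{n}_{\C}}$, a real or complex-type submanifold of ${\rm H^{n}_{\Q}}$) the subspace $V$ is only an $F$-subspace and the induced map is only $F$-linear, so before invoking the Proposition you must extend it $\F$-linearly to the $\F$-span of $V$ and verify that the extension still preserves the Hermitian form; this works because the form on the $\F$-span is determined by sesquilinearity from its values on an $F$-basis of $V$, but it is a step you skip.

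The genuine problem is your treatment of case (b). You propose to apply the $\F=\R$ case of the Proposition ``inside the real form realizing ${\rm H^{4}_{\R}}$,'' but no such real form exists: the isometry ${\rm H^{1}_{\Q}}\cong{\rm H^{4}_{\R}}$ is an exceptional isomorphism, not one induced by an $\R$-unitary subspace of the $2$-dimensional $\Q$-hyperbolic subspace $W$. As a real quadratic space $W$ has signature $(4,4)$, and its $\R$-unitary subspaces of signature $(4,1)$ project to totally geodesic submanifolds distinct from the quaternionic geodesic $\pi(W)\cap{\rm H^{n}_{\Q}}$ (they do not even have the same curvature), so the Proposition cannot be invoked there. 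What is actually needed is that the stabilizer of a quaternionic geodesic acts on it through ${\rm PSp}(1,1)\cong SO_{0}(1,4)$, i.e.\ through all orientation-preserving isometries of ${\rm H^{4}_{\R}}$, together with the elementary observation that an isometry of a totally geodesic ${\rm H^{3}_{\R}}$ into ${\rm H^{4}_{\R}}$ always admits an orientation-preserving extension (compose with the reflection in the image hyperplane if necessary). With that substitute, your two-stage plan for case (b) goes through; alternatively one may argue that the hypothesis ``linear'' excludes case (b) altogether, since such an $S$ is not the projectivization of any subspace.
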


\medskip

An interesting class of totally geodesic submanifolds of the quaternionic hyperbolic space ${\rm H^{n}_{\Q}}$ are submanifods which we call
totally geodesic submanifolds of complex type, or simply, submanifolds of complex type. Their construction is the following.
Let $\C^{n+1}(a) \subset\ \Q^{n+1} $ be the subset of vectors in $\Q^{n+1} $ with coordinates in $\C(a)$, where $a$ is a purely imaginary quaternion,
$\mod a =1$. Then $\C^{n+1}(a)$ is a vector space over the field $\C(a)$. The projectivization of $\C^{n+1}(a)$, denoted by ${\rm M}^n(\C(a))$, is a
projective submanifold of $\P\Q^{n}$ of real dimension $2n$. We call this submanifold  ${\rm M}^n(\C(a))$ a {\sl  projective submanifold of complex type of maximal dimension}.
It is clear that  the space $\C^{n+1}(a)$ is indefinite. The intersection
${\rm M}^n(\C(a))$  with ${\rm H^{n}_{\Q}}$ is a totally geodesic submanifold of  ${\rm H^{n}_{\Q}}$, called  a {\sl totally geodesic submanifold of complex type of maximal dimension}. It was proven in \cite{CGr} that all these submanifolds are isometric, and, moreover, they are globally equivalent
with respect to the isometry group of  ${\rm H^{n}_{\Q}}$, that is, for any two such submanifolds ${\rm M}$ and ${\rm N}$ there exists an element
$g \in  {\rm PU}(n,1; \Q)$ such that ${\rm M=g(N)}$. In particular, all of them are globally equivalent with respect to ${\rm PU}(n,1; \Q)$ to the
{\sl canonical totally geodesic complex  submanifold} ${\rm H^{n}_{\C}}$  defined by $\C^{n+1} \subset \Q^{n+1}$. This corresponds to the canonical
subfield of complex numbers $\C=\C(i) \subset  \Q$ in the above.

\medskip

If $V^{k+1} \subseteq  \C^{n+1}(a)$ is a subspace of complex dimension $k+1$, then its projectivization $W$ is called a {\sl projective submanifold of complex type of complex dimension k}. When $V^{k+1} \subseteq \C^{n+1}$, then its projectivization $W$ is called a {\sl canonical projective submanifold of complex type of complex dimension $k$} In this case, we will denote $W$ as $\P\C^{k}$.

\medskip

If  $V^{k+1} \subseteq  \C^{n+1}(a)$  is indefinite, then the intersection
of its projectivization with ${\rm H^{n}_{\Q}}$ is a totally geodesic submanifold of ${\rm H^{n}_{\Q}}$. We call this  submanifold of ${\rm H^{n}_{\Q}}$ a {\sl totally geodesic submanifold of complex type of complex dimension $k$}. When $V^{k+1} \subseteq  \C^{n+1}$, we call this totally geodesic submanifold a {\sl canonical totally geodesic  submanifold of complex type of complex dimension $k$}, or a {\sl canonical complex hyperbolic submanifold of dimension $k$} of ${\rm H^{n}_{\Q}}$.  In this case, we will denote this submanifold as ${\rm H^{k}_{\C}}$.

\subsubsection{A little more about the isometry group of the quaternionic hyperbolic space}\label{lmaig}

\noindent Let us consider the complex hyperbolic space ${\rm H^{n}_{\C}} $. It has a natural complex structure related to its isometry group, and the isometry group of ${\rm H^{n}_{\Q}}$
is generated by the holomorphic isometry group, which is the projective group ${\rm PU}(n,1; \C)$, and the anti-holomorphic isometry $\sigma$ induced by complex
conjugation in $\C ^{n+1}$. This anti-holomorphic isometry corresponds to the unique non-trivial automorphism of the field of complex numbers. Below we consider a similar isometry
of quaternionic hyperbolic space ${\rm H^{n}_{\Q}}$.

\medskip
We recall that if $f: \Q \rightarrow \Q$ is an automorphism of $\Q$, then $f$ is an inner automorphism of $\Q$, that is,
$f(q)= a q a^{-1}$ for some $a \in \Q, a \neq 0$.

\medskip

It follows from the fundamental theorem of projective geometry, see \cite{artin}, that each projective map $L: \P\Q^{n}\rightarrow \P\Q^{n}$ is induced by a semilinear or linear map $\tilde{L}: \Q^{n+1}\rightarrow \Q^{n+1}$.

\medskip

It is easy to see that if a projective map

$$L: \P\Q^{n}\rightarrow \P\Q^{n}$$

is induced by a semilinear map

$$\tilde{L}: \Q^{n+1}\rightarrow \Q^{n+1}, \tilde{L}(v)=a v a^{-1},v \in \Q^{n+1}, a\in \Q \setminus \R,$$

then it is  also induced by a linear map $v \mapsto av$.

\medskip

Therefore, the projective group of
$\P\Q^{n}$  is the projectivization of the linear group of  $\Q^{n+1}$.

\medskip

This implies that if

$$L: {\rm H^{n}_{\Q}} \rightarrow {\rm H^{n}_{\Q}} $$

is an isometry, then $L$ is induced  by a linear isometry

$$\tilde{L}: \Q^{n,1} \rightarrow \Q^{n,1}.$$

\medskip

This explains why the group of all isometries of ${\rm H^{n}_{\Q}}$
is the projectivization of the linear group  ${\rm U}(n,1; \Q)$, that is, ${\rm PU}(n,1; \Q).$

\medskip

Next we consider a curious map, which is an isometry of the quaternionic hyperbolic space, that has no analogue in geometries over
commutative fields. Let $\tilde{L}_a : v \mapsto av$, $v \in \Q^{n+1}$,  $a\in\Q$, $a$ is not real. The projectivization of this linear map
defines a non-trivial map $L_a:\P\Q^{n}\rightarrow \P\Q^{n}$. We remark that in projective spaces over commutative fields this map $L_a$ is identity.
It easy to see that $\tilde{L}_a \in {\rm U}(n,1; \Q)$ if and only if $|a|=1$, so $L_a$ in ${\rm PU}(n,1; \Q)$ if and only if $|a|=1$.

\begin{prop}\label{fixp} Let $a \in \Q$,  $|a|=1$. Then the fixed point set $S_a$ of $L_a$  is a totally geodesic submanifold of complex type of maximal dimension in ${\rm H^{n}_{\Q}}.$
This submanifold is globally equivalent to the canonical complex hyperbolic submanifold ${\rm H^{n}_{\C}}$ of ${\rm H^{n}_{\Q}}$.
\end{prop}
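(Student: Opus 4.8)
The plan is to determine the fixed point set of $L_a$ by solving the fixed point equation in coordinates and then recognising the resulting set as a submanifold of complex type. Since a point of $\P\Q^{n}$ is a right $\Q$-line, $\pi(v)$ is fixed by $L_a$ precisely when $av$ spans the same right line as $v$, that is, when
$$ av=v\lambda \quad\text{for some } \lambda \in \Q\setminus\{0\}. $$
First I would dispose of the trivial case: if $a\in\R$ then $L_a$ is the identity, so I may assume $a\notin\R$, and then $\C(a)$, the real span of $1$ and $a$, is a subfield of $\Q$ isomorphic to $\C$. Since $\mod a =1$, we have $\tilde{L}_a\in{\rm U}(n,1; \Q)$ as recalled above, so $L_a\in{\rm PU}(n,1; \Q)$ is an isometry of ${\rm H^{n}_{\Q}}$.

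The key step is to convert this right-eigenvalue condition into a centralizer condition. Assuming $av=v\lambda$, let $z_k$ be a nonzero coordinate of $v$; comparing the $k$-th coordinates gives $\lambda=z_k^{-1}az_k$, so $\lambda$ is similar to $a$. Replacing $v$ by $v'=vz_k^{-1}$, which represents the same projective point, I would compute $av'=v'(z_k\lambda z_k^{-1})=v'a$, so that every coordinate of $v'$ commutes with $a$. By Proposition \ref{qua3} the centralizer of $a$ is exactly $\C(a)$, hence all coordinates of $v'$ lie in $\C(a)$, i.e. $\pi(v)=\pi(v')\in{\rm M}^n(\C(a))$. Conversely, any $v$ with coordinates in $\C(a)$ satisfies $av=va$ because $\C(a)$ is commutative, so $\pi(v)$ is fixed with $\lambda=a$. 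This shows that the fixed points of $L_a$ in $\P\Q^{n}$ constitute exactly ${\rm M}^n(\C(a))$. Since the Hermitian form is $\C(a)$-valued and indefinite on $\C(a)^{n+1}$, intersecting with ${\rm H^{n}_{\Q}}$ exhibits $S_a$ as the totally geodesic submanifold of complex type of maximal dimension attached to $\C(a)$.

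For the last assertion I would produce an explicit $g\in{\rm PU}(n,1; \Q)$ carrying $S_a$ onto the canonical ${\rm H^{n}_{\C}}$; alternatively one may simply invoke the result of \cite{CGr}, recalled above, that all maximal complex-type submanifolds are globally ${\rm PU}(n,1; \Q)$-equivalent. Explicitly, writing $a=\cos\theta+u\sin\theta$ with $u$ a purely imaginary unit (so $\C(a)=\C(u)$), Proposition \ref{qsim} yields a unit $\tau$ with $\tau u\tau^{-1}=i$; then for $v\in\C(a)^{n+1}$ a coordinate $x+yu$ satisfies $\tau(x+yu)=(x+yi)\tau$, so $\tau v=w\tau$ with $w\in\C^{n+1}$ and $\pi(\tau v)=\pi(w)\in\P\C^{n}$, whence $L_\tau(S_a)={\rm H^{n}_{\C}}$ with $L_\tau\in{\rm PU}(n,1; \Q)$. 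The main obstacle throughout is the noncommutativity of $\Q$: projective points are right lines while $\tilde{L}_a$ acts on the left, so the natural invariant object is not a left eigenspace but the centralizer of $a$. The rescaling $v\mapsto vz_k^{-1}$ in the second step, and the absorption of the right factor $\tau$ in the third, are exactly the devices needed to reconcile the two sides.
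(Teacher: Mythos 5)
Your proof is correct and follows essentially the same route as the paper, whose entire argument is the observation that the result follows from Proposition \ref{qua3} (the centralizer of a non-real quaternion is $\C(a)$); you have simply written out in full the reduction of the right-eigenvalue fixed-point condition $av=v\lambda$ to that centralizer statement, plus the standard global equivalence of maximal complex-type submanifolds. No gaps; your version is a faithful expansion of the paper's one-line proof.
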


\begin{proof} The proof follows from Proposition \ref{qua3}.
\end{proof}

\medskip

It is also easy to see that if $a$ is purely imaginary, then  $L_a$ is an involution. We call this isometry $L_a$ a {\sl geodesic reflection} in $S_a$.

\section{Moduli of triples of points in quaternionic projective space}

\noindent In this section, we describe numerical invariants associated to an ordered triple of points in  $\P\Q^{n}$ which define the
equivalence class of a triple relative to the diagonal action of ${\rm PU}(n,1; \Q)$.

\subsection{The Gram matrix}

\noindent Let $p=(p_1, \ldots ,p_m)$ be an ordered $m$-tuple of distinct
points in $\P\Q^{n}$ of quaternionic  dimension $n \geq 1$. Then we
consider a Hermitian quaternionic $m\times m$-matrix
$$G= \ G(p,v) \ = \ (g_{ij}) \ = \ (\herm{v_i,v_j}),$$
where $v=(v_1, \ldots, v_m)$, $v_i \in \Q^{n,1}$, $\pi(v_i)=p_i$, is a lift of $p$.

\medskip

We call $G$ a {\sl Gram matrix} associated to a $m$-tuple $p$
defined by $v$. Of course, $G$ depends on the chosen lifts $v_i$. When replacing $v_i$ by $ v_i \lambda_i,$ $\lambda_i \in \Q$, $\lambda_i \neq 0$, we get
$\tilde{G}=D^{*}\ G D,$ where $D$ is a diagonal quaternionic matrix,
$D=diag(\lambda_1,\ldots,  \lambda_m)$,

\medskip

We say that two  Hermitian quaternionic $m \times m$ - matrices $H$ and
$\tilde{H}$ are {\sl equivalent} if there exists a
diagonal quaternionic matrix $D=diag(\lambda_1,\ldots,  \lambda_m)$,
$\lambda_i\neq 0$, such that $\tilde{H}=D^{*} \ H \ D.$

\medskip

Thus, to each ordered $m$-tuple $p$ of distinct points in
$\P\Q^{n}$ is associated an equivalence class of Hermitian quaternionic $m\times
m$ - matrices.

\begin{prop}\label{gram1}
Let $p=(p_1, \cdots ,p_m)$  be an ordered $m$-tuple of distinct
negative points in  $\P\Q^{n}$. Then the equivalence class of Gram
matrices associated to $p$ contains a matrix $G=(g_{ij})$ such that
$g_{ii}=-1$ and $g_{1j}=r_{1j}$ are real positive numbers for
$j=2, \ldots , m$.
\end{prop}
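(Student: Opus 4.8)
The plan is to use the one remaining freedom in the choice of lift—rescaling each $v_i$ by a nonzero quaternion $\lambda_i$—to push the Gram matrix into the stated normal form. The starting point is the transformation law: since the Hermitian form on a right vector space satisfies $\herm{v\alpha, w\beta} = \bar\alpha\,\herm{v,w}\,\beta$, replacing $v_i$ by $v_i\lambda_i$ sends $g_{ij}=\herm{v_i,v_j}$ to $\tilde g_{ij}=\bar\lambda_i\, g_{ij}\,\lambda_j$, consistently with $\tilde G = D^{*}GD$. I would write each scalar in polar form $\lambda_i=r_i\tau_i$ with $r_i>0$ real and $\mod{\tau_i}=1$, so that the modulus $r_i$ and the unit quaternion $\tau_i$ can be chosen independently and used for two different purposes.

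First I would normalize the diagonal. Because each $p_i$ is a negative point, $g_{ii}=\herm{v_i,v_i}$ is a negative real number, and under the scaling it becomes $\bar\lambda_i\, g_{ii}\,\lambda_i = \mod{\lambda_i}^2 g_{ii}=r_i^2\, g_{ii}$, where I have used that $g_{ii}$ is real and that $\mod{\lambda_i}^2=\bar\lambda_i\lambda_i$ is central, hence insensitive to $\tau_i$. Choosing $r_i=(-g_{ii})^{-1/2}$ makes every diagonal entry equal to $-1$, and—crucially—this leaves the unit quaternions $\tau_i$ entirely free for the next step, since any further unit scaling has $\mod{\tau_i}^2=1$ and preserves the normalized diagonal.

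The second step makes $g_{1j}$ real and positive for $j=2,\ldots,m$ using the phases $\tau_i$. The one point that genuinely uses the geometry is that $g_{1j}\neq 0$ for every $j$. Indeed, if $\herm{v_1,v_j}=0$ then, since $v_1$ and $v_j$ are linearly independent (the points are distinct) and both negative, every vector in their span satisfies $\herm{v_1\alpha+v_j\beta,\,v_1\alpha+v_j\beta}=g_{11}\mod{\alpha}^2+g_{jj}\mod{\beta}^2<0$ for $(\alpha,\beta)\neq 0$, exhibiting a $2$-dimensional negative-definite subspace. This contradicts the signature $(n,1)$, which admits negative-definite subspaces of dimension at most $1$; hence $g_{1j}\neq 0$.

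Granting $g_{1j}\neq 0$, I fix $\tau_1=1$ and set $\tau_j=\overline{g_{1j}}\,\mod{g_{1j}}^{-1}$ for $j\geq 2$; each is a unit quaternion, so the diagonal normalization survives. Then $g_{1j}\tau_j=g_{1j}\,\overline{g_{1j}}\,\mod{g_{1j}}^{-1}=\mod{g_{1j}}^2\,\mod{g_{1j}}^{-1}=\mod{g_{1j}}$, a positive real, whence the new entry $\tilde g_{1j}=r_1 r_j\,\mod{g_{1j}}>0$ is as required. The main subtlety to keep in view throughout is non-commutativity: it is precisely the centrality of $a\bar a=\mod{a}^2$ that makes both the modulus rescaling of the diagonal and the phase rotation $g_{1j}\mapsto\mod{g_{1j}}$ emerge as genuine positive reals, even though quaternionic products depend on order.
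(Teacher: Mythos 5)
Your proof is correct and follows essentially the same two-step normalization as the paper: first rescale by real positive $\lambda_i$ to set the diagonal to $-1$, then rotate by the unit quaternions $\overline{g_{1j}}/\mod{g_{1j}}$ to make the first row positive real. The only difference is that you supply a self-contained signature argument for $g_{1j}\neq 0$ (a two-dimensional negative-definite subspace would contradict signature $(n,1)$), where the paper simply cites Scharlau; that is a welcome, and correct, addition.
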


\begin{proof}
Let $v=(v_1, \ldots , v_m)$ be a lift of $p$. Since the vectors
$v_i$ are negative, we have that $g_{ij} \neq 0$ for all $i,j=1, \ldots , m$, see, for instance, \cite{Sch}. First, by appropriate re-scaling, we may assume
that $g_{ii}=\herm{v_i,v_i}=-1$. Indeed, since $\herm{v_i,v_i}<0$, then
$\lambda_i =1/\sqrt{-\herm{v_i,v_i}} $ is well defined. Since $\lambda_i \in \R$, we have that
$$\herm{v_i \lambda_i , v_i \lambda_i}= \lambda_i^2 \herm{v_i,v_i}= \herm{v_i,v_i}/\mod{\herm{v_i,v_i}}=-1.$$
Then we get the result we need by
replacing the vectors $v_i$, $i=2, \ldots , m,$ if necessarily, by $ v_i \lambda_i$,
where
$$\lambda_i= \overline{\herm{v_1,v_i}}/\mod{\herm{v_1,v_i}}. $$
Indeed, since $\mod{\lambda_i}=1$, we have that $\herm{v_i \lambda_i , v_i \lambda_i}=-1$, $i=2, \ldots , m.$
On the other hand, for all $i>1$
$$\herm{v_1,v_i\lambda_i}=\herm{v_1,v_i}\lambda_i=\mod{\herm{v_1,v_i}}>0.$$

\end{proof}

\medskip

Let $p$ and $q$ be two points in $\P\Q^{n}$. We say that $p$ and $q$ are {\sl orthogonal} if $\herm{v,w}=0$ for some lifts $v$
and $w$ of $p$ and $q$ respectively. It is clear that if $p$ and $q$ are orthogonal then $\herm{v,w}=0$ for all lifts $v$ and $w$ of $p$ and $q$.

\medskip

Let $p=(p_1, \ldots ,p_m)$  be an ordered $m$-tuple of distinct points in $\P\Q^{n}$.  We call $p$ {\sl generic}
if $p_i$ and $p_j$ are not orthogonal for all $i,j = 1, \ldots ,m$, $i\neq j$.

\medskip

Let $G=(g_{ij})$ be a Gram matrix associated to $p$. Then $p$ is generic if and only if $g_{ij}\neq 0$ for all $i,j = 1, \ldots ,m$, $i\neq j$.

\medskip

\begin{prop}\label{gram2}
Let $p=(p_1, \cdots ,p_m)$  be an ordered generic $m$-tuple of distinct
positive points in  $\P\Q^{n}$. Then the equivalence class of Gram
matrices associated to $p$ contains a matrix $G=(g_{ij})$ such that
$g_{ii}=1$ and $g_{1j}=r_{1j}$ are real positive numbers for
$j=2, \ldots , m$.
\end{prop}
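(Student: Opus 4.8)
The plan is to follow the argument of Proposition~\ref{gram1} almost verbatim, adapting it to the positive signature and isolating precisely where the genericity hypothesis is needed. Starting from an arbitrary lift $v=(v_1,\ldots,v_m)$ of $p$, I would perform the normalization in two successive re-scalings $v_i\mapsto v_i\lambda_i$, using throughout the elementary identity $\herm{v\lambda,w\mu}=\bar\lambda\,\herm{v,w}\,\mu$, which follows directly from the coordinate expression for the Hermitian form.

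First I would fix the diagonal entries. Since each $p_i$ is a positive point, $\herm{v_i,v_i}>0$, so $\lambda_i=1/\sqrt{\herm{v_i,v_i}}$ is a well-defined positive real number. Replacing $v_i$ by $v_i\lambda_i$ and using $\lambda_i\in\R$ yields $\herm{v_i\lambda_i,v_i\lambda_i}=\lambda_i^2\,\herm{v_i,v_i}=1$, so after this step every $g_{ii}=1$. This is the exact positive-signature analogue of the first re-scaling in Proposition~\ref{gram1}; the only change is the sign, which is why the normalized value is $1$ rather than $-1$.

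Next I would make the first row real and positive, and this is the single place where the hypotheses genuinely differ from the negative case. For negative vectors one automatically has $g_{ij}\neq 0$, since two negative vectors can never be orthogonal in signature $(n,1)$; for positive points orthogonality is possible, and it is exactly this that the genericity assumption forbids, guaranteeing $\herm{v_1,v_i}\neq 0$ for $i=2,\ldots,m$. Granting this, I set $\lambda_i=\overline{\herm{v_1,v_i}}/\mod{\herm{v_1,v_i}}$, which is unitary, $\mod{\lambda_i}=1$. Unitarity preserves the diagonal normalization of the previous step, since $\herm{v_i\lambda_i,v_i\lambda_i}=\overline{\lambda_i}\,\herm{v_i,v_i}\,\lambda_i=\overline{\lambda_i}\lambda_i=1$, while $\herm{v_1,v_i\lambda_i}=\herm{v_1,v_i}\lambda_i=\mod{\herm{v_1,v_i}}>0$, so each $g_{1j}=r_{1j}$ becomes real and positive. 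The argument is otherwise routine; the only (modest) obstacle is the bookkeeping of which quaternionic scalars act on which side and the verification that the second, unitary, re-scaling does not disturb the diagonal entries produced by the first, real, one. As the two families of scalars are real and unitary respectively, the two normalizations are compatible, and the resulting matrix $G=(g_{ij})$ lies in the required equivalence class.
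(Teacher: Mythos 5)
Your proof is correct and is exactly what the paper intends: its own proof of Proposition~\ref{gram2} consists of the single remark that it is ``a slight modification of the proof of Proposition~\ref{gram1}'', and your two-step re-scaling (real scalars for the diagonal, unitary scalars for the first row) is precisely that modification. You also correctly pinpoint the one genuine difference, namely that genericity must replace the automatic non-orthogonality of negative vectors in order to guarantee $\herm{v_1,v_i}\neq 0$.
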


\begin{proof} The proof is a slight modification of the proof of Proposition \ref{gram1}.
\end{proof}

\medskip

It is easy to see that a matrix $G=(g_{ij})$ defined in Propositions \ref{gram1} and \ref{gram2} is unique. We call this matrix
$G$ a {\sl normal form} of the associated Gram matrix. Also, we call $G$ the {\sl normalized} Gram
matrix.

\medskip

We recall that a subspace $V \subset \F^{n,1}$ is called {\sl singular} or {\sl degenerate} if it contains at least one
non-zero vector that is orthogonal to all vectors in $V$. Otherwise, $V$ is called {\sl regular}.

\medskip

\begin{rmk} It is easy to see that if $V$ is singular then $V$ contains at least one isotropic vector and does not contain
negative vectors.
\end{rmk}

\begin{lemma}\label{gram3} Let $V=\{v_1, \ldots , v_m \}$ and $W=\{w_1, \ldots , w_m \}$ be two subspaces of $ \Q^{n,1}$ spanned by $v_i$ and $w_i$.
Suppose that $V$ and $W$ are regular, and $\herm{v_i,v_j} = \herm{w_i,w_j}$, for all $i, j = 1, \ldots  m$. Then the correspondence
$v_i \mapsto w_i$ can be extended to an isometry of $ \Q^{n,1}$.
\end{lemma}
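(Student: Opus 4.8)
The plan is to reduce the statement to the classical Witt extension theorem (the Proposition stated just before the definition of submanifolds of complex type), which guarantees that any linear isometry between subspaces of $\Q^{n,1}$ extends to an element of $\u n$. The only thing I must supply is that the prescribed correspondence $v_i \mapsto w_i$ actually defines a well-defined linear isometry $V \to W$; once that is established, Witt's theorem finishes the argument. The hypothesis that $V$ and $W$ are regular is exactly what is needed to make this reduction work, and pinning down where regularity is used will be the crux.

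First I would show that the correspondence extends to a well-defined $\Q$-linear map. The natural attempt is to send $v_i \mapsto w_i$ and extend by right-linearity. The subtlety is that the $v_i$ need not be linearly independent: if $\sum_i v_i \lambda_i = 0$ is a nontrivial relation among the generators of $V$, one must check that the same relation holds among the $w_i$, i.e. $\sum_i w_i \lambda_i = 0$, so that the map is well defined. This is where regularity enters. Setting $u = \sum_i w_i \lambda_i \in W$, I would compute for each $j$
$$\herm{w_j, u} = \sum_i \herm{w_j, w_i}\lambda_i = \sum_i \herm{v_j, v_i}\lambda_i = \herm{v_j, \sum_i v_i \lambda_i} = \herm{v_j, 0} = 0,$$
using the hypothesis $\herm{v_i,v_j}=\herm{w_i,w_j}$ for all $i,j$. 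Hence $u$ is orthogonal to every generator $w_j$ of $W$, so $u$ is orthogonal to all of $W$; since $W$ is regular, the only such vector is $u=0$. Thus $\sum_i w_i \lambda_i = 0$, and the map $L\colon V \to W$, $v_i \mapsto w_i$, is well defined and $\Q$-linear. By the symmetric argument (interchanging the roles of $V$ and $W$, using that $V$ is regular) the map is also injective, hence a linear isomorphism onto $W$.

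Next I would verify that $L$ is an isometry of the Hermitian form, which is immediate on generators and extends by linearity: for $v = \sum_i v_i \lambda_i$ and $v' = \sum_j v_j \mu_j$ in $V$ one has
$$\herm{L v, L v'} = \sum_{i,j} \bar\lambda_i \herm{w_i, w_j}\mu_j = \sum_{i,j}\bar\lambda_i \herm{v_i, v_j}\mu_j = \herm{v, v'}.$$
So $L\colon V \to W$ is a linear isometry between subspaces of $\Q^{n,1}$. Finally, I would invoke the Witt extension Proposition stated earlier in the paper to extend $L$ to an element of $\u n$, completing the proof. I expect the main obstacle to be purely expository rather than technical: making the well-definedness step airtight, since it is precisely here that one cannot simply assume the $v_i$ are a basis, and the regularity hypothesis must be invoked explicitly to rule out nonzero vectors in the radical of the restricted form.
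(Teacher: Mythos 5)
Your proof is correct, but it takes a genuinely different route from the paper's. The paper disposes of this lemma in one line by citing Theorem~1 of Hofer's article on $m$-point invariants (reference [Hof] in the bibliography), which is precisely the statement that tuples with equal Gram matrices spanning regular subspaces are congruent. You instead give a self-contained argument: you first show that the correspondence $v_i \mapsto w_i$ is a well-defined linear isometry $V \to W$ --- using regularity of $W$ to kill the vector $u=\sum_i w_i\lambda_i$ arising from a relation $\sum_i v_i\lambda_i=0$, and regularity of $V$ symmetrically for injectivity --- and then you invoke the Witt extension proposition already stated in the paper (itself quoted from Scharlau) to extend this isometry to all of $\Q^{n,1}$. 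The computations are consistent with the paper's convention that $\herm{-,-}$ is conjugate-linear in the first slot and right-linear in the second, and you correctly identify that the whole content of the regularity hypothesis is the well-definedness and injectivity of the map on the (possibly linearly dependent) generators. What your approach buys is transparency: the reader sees exactly where regularity is used and the lemma is reduced to a result already present in the paper, rather than to an external citation; what the paper's approach buys is brevity. Either is acceptable.
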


\begin{proof} The proof follows from Theorem 1  in \cite{Hof}.
\end{proof}

\begin{prop}\label{grami}
Let $p=(p_1, \ldots ,p_m)$  and $p^\prime=(p_1 ^\prime, \ldots ,p_m ^\prime)$ be two ordered $m$-tuples of distinct
negative points in  $\P\Q^{n}$. Then $p$ and $p^\prime$  are congruent relative to the diagonal action of ${\rm PU}(n,1; \Q)$ if
and only if their associated Gram matrices are equivalent.
\end{prop}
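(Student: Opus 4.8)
The plan is to prove both directions of the equivalence, with the nontrivial direction being that equivalence of Gram matrices implies congruence of the tuples. The easy direction is the forward implication: if $p$ and $p'$ are congruent, then there is $g \in \pu{n,1;\Q}$ with $g(p_i) = p_i'$ for all $i$. Lifting $g$ to a linear isometry $\tilde g \in \u{n,1;\Q}$ and choosing lifts $v_i$ of $p_i$, the vectors $\tilde g(v_i)$ are lifts of $p_i'$. Since $\tilde g$ preserves the Hermitian form, $\herm{\tilde g(v_i), \tilde g(v_j)} = \herm{v_i, v_j}$, so the Gram matrix $G(p', \tilde g(v))$ equals $G(p,v)$; any other lift of $p'$ changes this by a diagonal conjugation $D^* G D$, so the associated Gram matrices are equivalent. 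This direction uses only the definition of $\u{n,1;\Q}$ and the fact (established in Section \ref{lmaig}) that every isometry of $\rh{n}_\Q$ lifts to a linear isometry.

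For the converse, suppose the Gram matrices of $p$ and $p'$ are equivalent. By definition this means I can choose lifts $v = (v_1, \ldots, v_m)$ of $p$ and $w = (w_1, \ldots, w_m)$ of $p'$ whose Gram matrices coincide, i.e. after absorbing the diagonal matrix $D$ into one of the lifts, $\herm{v_i, v_j} = \herm{w_i, w_j}$ for all $i,j$. The strategy is then to apply Lemma \ref{gram3}: if the subspaces $V = \operatorname{span}\{v_i\}$ and $W = \operatorname{span}\{w_i\}$ are both regular, the correspondence $v_i \mapsto w_i$ extends to a linear isometry $\tilde L$ of $\Q^{n,1}$, whose projectivization $L \in \pu{n,1;\Q}$ satisfies $L(p_i) = p_i'$, giving the desired congruence.

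The main obstacle is verifying the hypothesis of Lemma \ref{gram3}, namely that $V$ and $W$ are regular. This is where the assumption that the points are negative is essential. By the Remark preceding the lemma, a singular subspace contains no negative vectors; but $V$ is spanned by the negative vectors $v_i$, so I must argue that $V$ itself contains a negative vector (which it does, since $v_1 \in V$ is negative) and use this to rule out singularity. More precisely, I would show directly that the restriction of the Hermitian form to $V$ is nondegenerate: a vector $u \in V$ orthogonal to all of $V$ would in particular satisfy $\herm{u,u} = 0$, making $u$ isotropic, and one checks this is incompatible with $V$ containing the negative vector $v_1$ together with the remaining spanning relations — equivalently, the signature of the restricted form cannot be degenerate once a negative direction is present in a form of signature $(n,1)$. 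The same argument applies verbatim to $W$ since it has the identical Gram matrix.

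A subtlety worth flagging is the interaction between the diagonal equivalence $D^* G D$ and the choice of lifts: I should make sure that rescaling the $v_i$ or $w_i$ by nonzero quaternions $\lambda_i$ to match the two Gram matrices does not disturb the projective points $p_i = \pi(v_i)$, which it does not since $\pi(v_i \lambda_i) = \pi(v_i)$ for $\lambda_i \neq 0$. Thus the freedom in choosing lifts is exactly the freedom encoded in the equivalence relation on Gram matrices, and once the two Gram matrices are made literally equal, Lemma \ref{gram3} delivers the isometry and the proof concludes.
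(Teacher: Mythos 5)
Your proposal is correct and follows essentially the same route as the paper: make the Gram matrices literally equal by rescaling lifts, observe that the spanned subspaces are regular because a singular subspace contains no negative vectors while $V$ and $V'$ are spanned by negative vectors, and then invoke Lemma \ref{gram3} to extend $v_i \mapsto v_i'$ to a linear isometry of $\Q^{n,1}$ whose projectivization gives the congruence. The only difference is that you spell out the regularity check and the trivial forward direction, which the paper leaves implicit.
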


\begin{proof}
Let $V$ and $V^\prime$ be the subspaces spanned by $v_i$ and $v_i^\prime $, $i=1, \ldots, m.$ Then it is clear that $V$ and $V^\prime$ are regular.
Since all the points $p_i$ are distinct, Lemma \ref{gram3} implies  that the map defined by $v \mapsto v^\prime$ extends to a linear isometry
 of $\Q^{n,1}$. The projectivization of this isometry maps $p$ in $p^\prime$.
\end{proof}

\begin{co}
Let $p=(p_1, \ldots ,p_m)$  and  $p^\prime=(p_1^\prime, \ldots ,p_m^\prime)$ be two ordered $m$-tuples of distinct
negative points in  $\P\Q^{n}$. Then $p$ and $p^\prime$ are congruent relative to the diagonal action of ${\rm PU}(n,1; \Q)$ if
and only if their normalized Gram matrices are equal.
\end{co}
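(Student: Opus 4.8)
The plan is to deduce this corollary by combining Proposition \ref{grami} with the uniqueness of the normal form asserted just above the statement. Proposition \ref{grami} already reduces the congruence problem to the equivalence of Gram matrices, so the only additional ingredient I need is that the normalized Gram matrix is a complete invariant of the equivalence class, that is, two Gram matrices are equivalent if and only if their normal forms coincide. Once this is in place, both implications follow formally.

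First I would dispose of the reverse implication, which requires no uniqueness. Suppose the normalized Gram matrices of $p$ and $p^\prime$ are equal, say both equal to $G$. By Proposition \ref{gram1} the associated Gram matrix of $p$ is equivalent to $G$, and likewise the associated Gram matrix of $p^\prime$ is equivalent to $G$; since equivalence of Hermitian quaternionic matrices is transitive, the Gram matrices of $p$ and $p^\prime$ are equivalent to one another. Proposition \ref{grami} then yields that $p$ and $p^\prime$ are congruent relative to the diagonal action of ${\rm PU}(n,1; \Q)$.

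For the forward implication I would argue as follows. If $p$ and $p^\prime$ are congruent, then by Proposition \ref{grami} their associated Gram matrices are equivalent, hence lie in a single equivalence class. By Proposition \ref{gram1} this class contains a normal form, and the normalized Gram matrices of $p$ and of $p^\prime$ are both normal forms lying in this same class. It therefore suffices to know that a given equivalence class contains exactly one normal form; the two normalized matrices then necessarily coincide.

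The hard part, and the step on which everything hinges, is precisely this uniqueness. To verify it I would suppose that $G=(g_{ij})$ and $G^\prime=(g^\prime_{ij})$ are both in normal form and equivalent, so that $G^\prime=D^{*}GD$ for a diagonal matrix $D=diag(\lambda_1,\ldots,\lambda_m)$ with $\lambda_i\neq 0$. Comparing diagonal entries gives $-1=g^\prime_{ii}=\bar\lambda_i(-1)\lambda_i=-\mod{\lambda_i}^2$, so $\mod{\lambda_i}=1$ for every $i$. Comparing the first row, and using that $g_{1j}$ and $g^\prime_{1j}$ are real positive so that these scalars are central, gives $g^\prime_{1j}=g_{1j}\bar\lambda_1\lambda_j$; this forces $\bar\lambda_1\lambda_j$ to be a positive real of modulus one, whence $\bar\lambda_1\lambda_j=1$ and, since $\bar\lambda_1=\lambda_1^{-1}$, one gets $\lambda_j=\lambda_1$ for every $j$. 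Thus $D=\lambda_1{\rm E}$ acts by a single global conjugation $g_{ij}\mapsto\bar\lambda_1 g_{ij}\lambda_1$. Establishing that this residual conjugation by the unit quaternion $\lambda_1$ actually fixes every remaining entry, so that $G^\prime=G$, is the genuinely delicate point, since noncommutativity of $\Q$ means such a conjugation is not automatically trivial; I expect this to be the main obstacle, and I would control it using the similarity and centralizer results of Section 1.1.
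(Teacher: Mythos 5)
Your overall strategy is exactly the one the paper intends: the corollary is stated without proof and is meant to follow from Proposition \ref{grami} together with the assertion, made just before the corollary, that the normal form of Proposition \ref{gram1} is unique. Your reverse implication and your reduction of the forward implication to uniqueness of the normal form are both correct, and so is your computation showing that any two equivalent normal forms are related by $D=\lambda_1 {\rm E}$ with $\mod{\lambda_1}=1$, i.e.\ by the simultaneous conjugation $g_{ij}\mapsto\bar\lambda_1 g_{ij}\lambda_1$ of all entries.

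The step you flag as delicate is, however, a genuine gap, and it cannot be closed as stated: the residual conjugation does \emph{not} fix the remaining entries in general. By Proposition \ref{qua3} the centralizer of a unit quaternion $\lambda_1\notin\R$ is $\C(\lambda_1)$, so $\bar\lambda_1 g_{ij}\lambda_1=g_{ij}$ for $i,j\ge 2$ only when $g_{ij}$ is real or lies in $\C(\lambda_1)$. For $m\ge 3$ one easily builds triples of negative points whose normalized Gram matrix has a non-real entry $g_{23}$ (take $v_1=(0,0,1)$, $v_2=(\sqrt{t^2-1},0,-t)$ and a suitable $v_3$ in $\Q^{2,1}$); replacing every lift $v_i$ by $v_i\mu$ for a unit quaternion $\mu$ with $g_{23}\notin\C(\mu)$ leaves the points unchanged but produces a second, different normal form of the same tuple. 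So the normal form is unique only up to simultaneous conjugation of all entries by a unit quaternion; the uniqueness assertion preceding the corollary fails for $m\ge 3$, and the corollary is correct only if ``normalized Gram matrices are equal'' is read as ``equal for some choice of normalizing lifts'', equivalently ``equal up to simultaneous conjugation by a unit quaternion''. This reading is consistent with the paper's own later treatment, where the extra entry of a triple is further normalized to a complex number of fixed sign of imaginary part and the $\tau^{*}$-invariant is defined via the similarity class. Under that reading your first two paragraphs already constitute a complete proof and the problematic uniqueness step is not needed.
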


\medskip

By applying the similar arguments, we get the following

\begin{prop}
Let $p=(p_1, \ldots ,p_m)$  and  $p^\prime=(p_1^\prime, \ldots ,p_m^\prime)$ be two ordered generic $m$-tuples of distinct
positive points in  $\P\Q^{n}$  such that the subspaces  $V$ and $V^\prime$ spanned by some lifts of $p$ and $p^\prime$  are regular.
Then $p$ and $p^\prime$ are congruent relative to the diagonal action of ${\rm PU}(n,1; \Q)$ if only if their associated Gram matrices are equivalent.
\end{prop}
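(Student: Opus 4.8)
The plan is to mirror the proof of Proposition \ref{grami} as closely as possible, since the final statement is its direct analogue for positive points. The overall strategy has two directions. For the forward direction, suppose $p$ and $p^\prime$ are congruent relative to the diagonal action of $\pu{n,\Q}$. Then there is an element $g \in \pu{n,\Q}$ with $g(p_i)=p_i^\prime$ for all $i$. Lifting $g$ to a linear isometry $\tilde g \in \u{n,\Q}$, I would compare the images $\tilde g(v_i)$ with the chosen lifts $v_i^\prime$ of $p_i^\prime$: since both project to the same point, they differ by a right scalar, so $\tilde g(v_i)=v_i^\prime \mu_i$ for some $\mu_i \in \Q \setminus \{0\}$. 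Because $\tilde g$ preserves the Hermitian form, $\herm{v_i,v_j}=\herm{\tilde g(v_i),\tilde g(v_j)}=\bar\mu_i \herm{v_i^\prime,v_j^\prime}\mu_j$, which is exactly the statement that the two Gram matrices are equivalent via the diagonal matrix $D=\mathrm{diag}(\mu_1,\ldots,\mu_m)$.

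For the converse direction, suppose the associated Gram matrices are equivalent. By definition there is a diagonal matrix $D=\mathrm{diag}(\lambda_1,\ldots,\lambda_m)$, $\lambda_i \neq 0$, such that replacing the lifts $v_i$ by $v_i\lambda_i$ produces Gram entries equal to $\herm{v_i^\prime,v_j^\prime}$. After this re-scaling I may simply assume $\herm{v_i,v_j}=\herm{v_i^\prime,v_j^\prime}$ for all $i,j$. Now the hypothesis that the spanned subspaces $V$ and $V^\prime$ are regular allows me to invoke Lemma \ref{gram3} directly: the correspondence $v_i \mapsto v_i^\prime$ extends to a linear isometry of $\Q^{n,1}$. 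Projectivizing this isometry gives an element of $\pu{n,\Q}$ carrying $p$ to $p^\prime$, which is the desired congruence.

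The structural reason this argument is nearly identical to Proposition \ref{grami} is that Lemma \ref{gram3} is stated for arbitrary regular subspaces and does not use the sign of $\herm{v,v}$; all that matters is regularity. This is precisely why the hypothesis on regularity of $V$ and $V^\prime$ appears explicitly in the statement here but was automatic for negative points. The genericity hypothesis on the positive tuples plays only an auxiliary role: in the negative case every pair of distinct points is automatically non-orthogonal, whereas for positive points this must be imposed so that the Gram matrices have nonzero off-diagonal entries and the normalization in Proposition \ref{gram2} is available.

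The main obstacle, and the only place the argument genuinely departs from the negative case, is the regularity assumption. For negative points one can argue a priori that the spanned subspace contains a negative vector and hence, by the Remark following Lemma \ref{gram3}, cannot be singular; for positive points no such automatic conclusion holds, since a span of positive vectors may well be degenerate. Thus the essential point is not a new calculation but the recognition that regularity of $V$ and $V^\prime$ is exactly the hypothesis needed to apply Lemma \ref{gram3}, and that once this is granted the Witt-type extension and subsequent projectivization proceed verbatim as before.
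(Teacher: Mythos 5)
Your argument is correct and follows essentially the same route as the paper, which simply transfers the proof of the negative-point case (rescale lifts to equalize the Gram matrices, then invoke Lemma \ref{gram3} on the regular spans and projectivize the resulting linear isometry). You are in fact more explicit than the paper — spelling out the forward direction and correctly identifying that regularity must now be assumed rather than deduced, while genericity serves only the later normalization — but there is no substantive difference in method.
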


\begin{co}\label{gramii} Let $p=(p_1, \ldots ,p_m)$  and  $p^\prime=(p_1^\prime, \ldots ,p_m^\prime)$ be two ordered generic $m$-tuples of distinct
positive points in  $\P\Q^{n}$ such that the subspaces  $V$ and $V^\prime$ spanned by some lifts of $p$ and $p^\prime$  are regular.
Then $p$ and $p^\prime$ are congruent relative to the diagonal action of ${\rm PU}(n,1; \Q)$ if and only if their normalized Gram matrices are equal.
\end{co}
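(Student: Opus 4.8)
The plan is to deduce this Corollary from the preceding Proposition together with the uniqueness of the normal form, exactly as in the negative-point case. By that Proposition, under the stated genericity and regularity hypotheses $p$ and $p'$ are congruent relative to the diagonal action of ${\rm PU}(n,1;\Q)$ if and only if their associated Gram matrices $G$ and $G'$ are equivalent. By Proposition \ref{gram2} the equivalence class of each of these matrices contains a normal form (diagonal entries $1$, first row real and positive), and by definition the normalized Gram matrix of a tuple is precisely this normal form. The whole statement therefore reduces to the claim that two equivalent matrices which are both in normal form must coincide; granting this, $G$ and $G'$ are equivalent if and only if they share the same normal form, i.e. if and only if the normalized Gram matrices of $p$ and $p'$ are equal.

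First I would establish the reduction step by a direct computation. Suppose $\tilde G = D^{*} G D$ with $D = \mathrm{diag}(\lambda_1,\dots,\lambda_m)$, $\lambda_i \neq 0$, where both $G=(g_{ij})$ and $\tilde G=(\tilde g_{ij})$ are in normal form. Comparing diagonal entries gives $\tilde g_{ii} = \bar\lambda_i g_{ii}\lambda_i = \mod{\lambda_i}^2 = 1$, so $\mod{\lambda_i}=1$ for every $i$. Comparing the first row, for $j\geq 2$ we have $\tilde g_{1j} = \bar\lambda_1 g_{1j}\lambda_j = g_{1j}\,\bar\lambda_1\lambda_j$, since $g_{1j}=r_{1j}$ is real; as both $g_{1j}$ and $\tilde g_{1j}$ are real and positive, the factor $\bar\lambda_1\lambda_j$ must be real and positive, and since $\mod{\bar\lambda_1\lambda_j}=1$ it equals $1$, forcing $\lambda_j=\lambda_1$. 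Thus $D = \lambda_1\,{\rm E}$ with $\mod{\lambda_1}=1$.

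The main obstacle is the final identification, and it is precisely where the non-commutativity of $\Q$ intervenes. From $D = \lambda_1\,{\rm E}$ one gets $\tilde g_{ij} = \bar\lambda_1 g_{ij}\lambda_1$ for all $i,j$; over a commutative field $\lambda_1$ would be central and we would conclude $\tilde G = G$ at once, but over $\Q$ this is merely a conjugation, which by Proposition \ref{qsim} replaces each off-diagonal entry $g_{ij}$ ($i,j\geq 2$) by a quaternion only \emph{similar} to it. Indeed, right-multiplying every lift $v_i$ by the same unit $\lambda_1$ leaves the points $p_i=\pi(v_i)$ unchanged (projective points are right $\Q$-lines) while conjugating $G$ into $\bar\lambda_1 G\lambda_1$, so a priori the normal form is canonical only up to the global conjugation $G\mapsto\bar\lambda G\lambda$ with $\mod{\lambda}=1$. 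The crux is therefore to show that this residual freedom is harmless: either by reading equality of normalized Gram matrices as equality up to this global conjugation — the conjugation itself being realized inside ${\rm PU}(n,1;\Q)$ — or by pinning down a genuinely canonical representative through one further normalization of an off-diagonal entry. Apart from this point, the argument is the same bookkeeping already carried out in Propositions \ref{gram1} and \ref{gram2}.
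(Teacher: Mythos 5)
Your reduction is exactly the one the paper intends: Corollary \ref{gramii} is meant to follow from the preceding Proposition together with the assertion, made just after Propositions \ref{gram1} and \ref{gram2}, that the normal form of the Gram matrix is unique; the paper supplies no further argument. Your computation of the rescaling matrix $D$ is correct, and the obstruction you isolate at the end is genuine: for $m\geq 3$ the normal form over $\Q$ is canonical only up to the simultaneous conjugation $G\mapsto \overline{\lambda}\,G\,\lambda$ with $\mod{\lambda}=1$ of the entries $g_{ij}$, $i,j\geq 2$, so the paper's uniqueness claim is not literally correct as stated over a non-commutative field. The repair you sketch is the right one and closes immediately: that conjugation is precisely the effect of replacing every lift $v_i$ by $v_i\lambda$, which changes no point $p_i$, so the corollary holds provided ``normalized Gram matrices are equal'' is read as equality for a suitable common choice of lifts (equivalently, equality up to this global conjugation). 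Note that the paper itself implicitly concedes the point later: the ``complex normalized'' Gram matrix of Section \ref{pos} and the dichotomy $G=G^\prime$ or $\overline{G}=G^\prime$ in Proposition \ref{pos1} are exactly the trace of this residual freedom, since conjugating a complex entry by a unit quaternion such as $j$ yields its complex conjugate. In short, your proof follows the paper's route but carries it out with more care at the one place where care is actually required.
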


\begin{rmk} It is easy to see that a subspace $V$ in $\Q^{n,1}$ is singular if and only if its projectivization
is a projective submanifold of $\P\Q^{n}$ tangent  to $\partial{\rm H^{n}_{\Q}}$ at an unique isotropic point $p$, lying, except this point $p$,
in the positive part of $\P\Q^{n}$.
\end{rmk}

\subsection{Invariants of triangles in quaternionic hyperbolic geometry: inside and on the boudary}

\noindent In this section, we define some invariants of ordered triples of points in $\P\Q^{n}$
which generalize Cartan's angular invariant and Brehm's shape invariants in complex hyperbolic geometry
to quaternionic hyperbolic geometry. 

\subsubsection {Quaternionic Cartan's angular invariant}\label{ideal}

\noindent First,  we recall the definition of Cartan' s angular invariant in complex hyperbolic geometry.
\medskip

Let  $p=(p_1, p_2, p_3)$ be an ordered triple of points in $\partial {\rm H^{n}_{\C}}$. Then Cartan's invariant $\A(p)$ of $p$ is defined as

$$\A(p)= \arg (- \herm{v_1,v_2}\herm{v_2,v_3}\herm{v_3,v_1}),$$
where $v_i$ is a lift of $p_i$.

It is easy to see that $\A(p)$ is well-defined,  that is, it is independent of the chosen  lifts, and it satisfies the inequality

$$-\pi/2 \leq \A(p)\leq\pi/2.$$

The inequalities above follow from the following proposition, see \cite{Gol}.

\begin{prop}\label{goldi} Let $ v, w, u \in \C^{2,1} $ be isotropic or negative vectors, then
$${\rm Re} (\herm{v,w}\herm{w,u}\herm{u,v}) \leq 0.$$
\end{prop}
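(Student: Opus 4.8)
The plan is to reduce the claim to a statement about the real part of a triple Hermitian product and to normalise the configuration using the group action. Since $v,w,u$ are isotropic or negative vectors in $\C^{2,1}$, any pair of them spans a subspace that meets the closure of $\rh{}$, and the quantity $\herm{v,w}\herm{w,u}\herm{u,v}$ is invariant under rescaling each vector by a positive real (indeed its real part is invariant under scaling by any nonzero complex number, since the three factors pick up the moduli $\mod{\lambda_i}^2$ which are positive reals). So first I would reduce to the case $\herm{v,v}=\herm{w,w}=\herm{u,u}\in\{0,-1\}$ by rescaling, and more importantly observe that by $\pu{2}$ I may move the three points into a convenient position.

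First I would handle the degenerate situations: if any two of the vectors are proportional, or if the three points fail to span a $2$-dimensional totally geodesic submanifold, then the configuration lies in a lower-dimensional space (a complex geodesic $\ch{1}$ or a single point) and the triple product is easily seen to be real, so its real part equals the product itself; I would check the sign directly. The generic case is the heart of the matter. Here I would use the transitivity properties of $\pu{2}$ to normalise: place $v$ and $w$ as explicit standard vectors, and then parametrise $u$ by a small number of real parameters. Concretely, choosing lifts so that $\herm{v,w}$ is a positive real and exploiting the freedom to rotate, I expect to reduce the triple product to an expression of the form $-\mod{\herm{v,w}}\mod{\herm{w,u}}\mod{\herm{u,v}}\cos\theta$ times a phase, at which point the inequality $\mathrm{Re}(\cdots)\le 0$ becomes a statement about a single angle.

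The cleanest route, which I would pursue, is to avoid explicit coordinates by invoking the positive-definiteness of the form on an appropriate subspace. The key algebraic fact is that for negative (or null) vectors the Hermitian form behaves like an inner product of reversed signature on the span, so the product $\herm{v,w}\herm{w,u}\herm{u,v}$ is exactly the type of triple overlap whose real part is controlled by a Gram-determinant inequality. I would therefore compute $\det G$ for the Gram matrix $G=(\herm{\cdot,\cdot})$ of the three vectors, whose diagonal entries are $\le 0$; since $G$ is a Hermitian matrix of a form of signature $(2,1)$ restricted to vectors with nonpositive self-product, its determinant has a definite sign, and expanding $\det G$ exhibits $2\,\mathrm{Re}(\herm{v,w}\herm{w,u}\herm{u,v})$ with a controllable coefficient. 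Relating the sign of $\det G$ to the signature constraint then forces $\mathrm{Re}(\herm{v,w}\herm{w,u}\herm{u,v})\le 0$.

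The main obstacle I anticipate is bookkeeping the signs and the non-commutativity-flavoured care needed even in the complex case: the three factors are complex numbers whose product is generally not real, so one cannot simply assert positivity of a modulus; the argument must genuinely use that the ambient form has signature $(n,1)$ with exactly one negative direction, which is what pins down the inequality rather than an equality or the reverse bound. Controlling this via the determinant of the $3\times 3$ Gram matrix, together with a clean reduction to the case where the three vectors are linearly independent and span a regular subspace, is the delicate step; the boundary case where some of the vectors are isotropic (so the diagonal entries vanish) must be treated as a limit or handled separately, and I would verify that the inequality remains non-strict there.
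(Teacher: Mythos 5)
The paper offers no proof of this proposition --- it is quoted directly from Goldman's book \cite{Gol} --- so the only thing to assess is whether your argument stands on its own. Your Gram-determinant route is viable and, once completed, is arguably the cleanest proof, but as written it has one concrete gap. Writing $G=(g_{ij})$ for the Gram matrix of $(v,w,u)$ and $H=\herm{v,w}\herm{w,u}\herm{u,v}=g_{12}g_{23}g_{31}$, the expansion you invoke reads
$$\det G \;=\; g_{11}g_{22}g_{33} \;+\; 2\,{\rm Re}(H)\;-\;g_{11}\mod{g_{23}}^2\;-\;g_{22}\mod{g_{13}}^2\;-\;g_{33}\mod{g_{12}}^2,$$
hence
$$2\,{\rm Re}(H)\;=\;\det G\;-\;g_{11}g_{22}g_{33}\;+\;g_{11}\mod{g_{23}}^2\;+\;g_{22}\mod{g_{13}}^2\;+\;g_{33}\mod{g_{12}}^2.$$
On the right, $\det G\le 0$ (it is $-\mod{\det P}^2<0$ when the vectors are independent, since $G=P^*\,{\rm diag}(1,1,-1)\,P$, and $=0$ when they are dependent), and the last three terms are $\le 0$ because $g_{ii}\le 0$. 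But the term $-g_{11}g_{22}g_{33}$ is \emph{nonnegative}, so the sign bookkeeping does not close by itself --- this is exactly the ``delicate step'' you flag without resolving. The missing ingredient is the reversed Cauchy--Schwarz inequality in signature $(2,1)$: for two vectors of nonpositive square, $\mod{g_{23}}^2\ge g_{22}g_{33}$ (decompose $u=w\lambda+u'$ with $u'\in w^\perp$ and use that $w^\perp$ is positive semidefinite; the case $g_{22}=0$ is trivial). Grouping $-g_{11}g_{22}g_{33}+g_{11}\mod{g_{23}}^2=g_{11}\bigl(\mod{g_{23}}^2-g_{22}g_{33}\bigr)\le 0$ then makes every summand nonpositive and finishes the proof.

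With that lemma supplied, your argument needs none of the apparatus in your first two paragraphs: no normalization by ${\rm PU}(2,1;\C)$, no separate treatment of linearly dependent triples (there $\det G=0$ and the same estimate applies), and no limiting argument for isotropic vectors (the reversed Cauchy--Schwarz inequality is trivial when a diagonal entry vanishes). One small correction: ${\rm Re}(H)$ is not invariant under rescaling the lifts --- the whole product is multiplied by $\mod{\lambda_1}^2\mod{\lambda_2}^2\mod{\lambda_3}^2$ --- but since that factor is positive the \emph{sign} of ${\rm Re}(H)$ is unchanged, which is all the statement requires.
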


\medskip

\begin{rmk} It is possible to extend the Cartan invariant of triples of isotropic points to triples of points in ${\rm H^{n}_{\C}} \cup \partial {\rm H^{n}_{\C}}$.
Indeed, no difficulty arises in the above definition because $\herm{v,w}\neq 0$ for any $v, w \in V_{0} \cup V_{-}.$
\end{rmk}

\medskip

Cartan's invariant is the only invariant of an ordered triple of isotropic points in the following sense:

\begin{prop}\label{car} Let $p=(p_1, p_2,p_3)$  and  $p^\prime=(p_1^\prime, p_2^\prime ,p_3^\prime)$ be two ordered triples of distinct points in $\partial {\rm H^{n}_{\C}}$.
Then $p$ and $p^\prime$ are congruent relative to the diagonal action of ${\rm PU}(n,1; \C$) if and only if $\A(p)=\A(p^\prime).$
\end{prop}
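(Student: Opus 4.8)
The plan is to prove both implications, handling the trivial necessity first and then reducing sufficiency to a computation of a normalized Gram matrix. For necessity, suppose $p$ and $p'$ are congruent, say $g\in {\rm PU}(n,1; \C)$ carries each $p_i$ to $p_i'$. Lifting $g$ to $\tilde g\in {\rm U}(n,1; \C)$, the vector $\tilde g v_i$ is a lift of $p_i'$, so $v_i'=\tilde g v_i\,\mu_i$ for suitable $\mu_i\in\C\setminus\{0\}$. Since $\tilde g$ preserves the form and $\C$ is commutative,
$$\herm{v_1',v_2'}\herm{v_2',v_3'}\herm{v_3',v_1'}=|\mu_1|^2|\mu_2|^2|\mu_3|^2\,\herm{v_1,v_2}\herm{v_2,v_3}\herm{v_3,v_1},$$
which differs from the original triple product only by a positive real factor, so $\A(p)=\A(p')$.

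\textbf{Normalizing the Gram matrix.} For the converse I would exploit the Gram matrix $G=(g_{ij})$, $g_{ij}=\herm{v_i,v_j}$, just as in the negative-point case. Since the $p_i$ are distinct isotropic points, $g_{ii}=0$, while $g_{ij}\neq 0$ for $i\neq j$ (as recalled above, $\herm{v,w}\neq 0$ for $v,w\in V_0$, a form of signature $(n,1)$ having no totally isotropic $2$-plane). Rescaling $v_i\mapsto v_i\lambda_i$ replaces $g_{ij}$ by $\overline{\lambda_i}\,g_{ij}\,\lambda_j$. First I would pick the moduli $|\lambda_i|$ to make $|g_{12}|=|g_{23}|=|g_{13}|=1$ (a solvable system), and then choose the phases of $\lambda_1,\lambda_2,\lambda_3$ to force $g_{12}=g_{23}=1$. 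The single phase combination that no rescaling can alter is $\arg(g_{12}g_{23}g_{31})$, and a direct check gives $\arg(g_{12}g_{23}g_{31})=\A(p)+\pi$. Hence the normalized Gram matrix is completely determined by $\A(p)$:
$$G(\A)=\begin{pmatrix} 0 & 1 & -e^{-i\A(p)} \\ 1 & 0 & 1 \\ -e^{i\A(p)} & 1 & 0 \end{pmatrix},$$
so $\A(p)=\A(p')$ immediately implies that the normalized Gram matrices of $p$ and $p'$ coincide.

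\textbf{Producing the congruence.} Equal normalized Gram matrices give a form-preserving correspondence $v_i\mapsto v_i'$ between the spans $V=\mathrm{span}(v_i)$ and $V'=\mathrm{span}(v_i')$. A short computation yields $\det G(\A)=-2\cos\A(p)$. When $\A(p)\neq\pm\pi/2$ this is nonzero, so $V$ and $V'$ are regular three-dimensional subspaces and the three lifts are linearly independent; thus $v_i\mapsto v_i'$ is a genuine linear isometry $V\to V'$, which the Witt extension theorem (stated above for $\F^{n,1}$) prolongs to an element of ${\rm U}(n,1; \C)$ whose projectivization realizes the required congruence.

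\textbf{The main obstacle.} The delicate point is the extremal case $\A(p)=\pm\pi/2$, where $\det G(\A)=0$; this is precisely the equality case of Proposition \ref{goldi}, in which the three points lie on a common complex geodesic ${\rm H^{1}_{\C}}\cong {\rm H^{2}_{\R}}$. Here the span is degenerate (two-dimensional), the lifts are linearly dependent, and equality of Gram matrices only forces the linear relation among the $v_i$ to be carried into the \emph{radical} of $V'$ rather than to zero, so the correspondence need not a priori be well defined. I would dispose of this case separately: using the double transitivity of ${\rm PU}(n,1; \C)$ on $\partial {\rm H^{n}_{\C}}$ to move $(p_1,p_2)$ onto $(p_1',p_2')$, I would reduce to the stabilizer of two boundary points acting on the third point inside the complex geodesic they span, and verify there that $\A$ determines the remaining orbit. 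This both removes the degeneracy and guarantees that the relation spaces of the two triples agree, completing the argument.
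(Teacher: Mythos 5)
The paper states this proposition without proof --- it is recalled as Cartan's classical result --- so your argument cannot be compared to an internal one; judged on its own it is essentially correct, and it is in the spirit of the Gram-matrix machinery the paper develops elsewhere (Proposition \ref{gram1}, Lemma \ref{gram3}). The easy direction, the normalization of the Gram matrix to
$g_{11}=g_{22}=g_{33}=0$, $g_{12}=g_{23}=1$, $g_{13}=-e^{-i\A(p)}$, the observation that $\arg(g_{12}g_{23}g_{31})$ is the one rescaling-invariant phase, and the computation $\det G=-2\cos\A(p)$ are all right, and Witt extension finishes the generic case.

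The one point to correct is your description of the extremal case $\A(p)=\pm\pi/2$. There the three lifts do become linearly dependent and the Gram matrix is singular, but the span $V$ is \emph{not} degenerate: it is the two-dimensional subspace underlying the complex geodesic, which has signature $(1,1)$ and is therefore regular (its radical is zero). Consequently the worry that a linear relation among the $v_i$ might only be carried into a nonzero radical of $V'$ does not arise: since $V'$ is regular, equality of Gram matrices forces the relation to be preserved, the correspondence $v_i\mapsto v_i'$ is a well-defined linear isometry of two-dimensional regular subspaces, and the same Witt extension (this is exactly Lemma \ref{gram3} of the paper, which assumes only that the spans are regular, not that the spanning vectors are independent) closes the case. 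So your separate reduction via double transitivity and the stabilizer of two boundary points acting on the two arcs of the circle is a valid fallback --- the two arcs are indeed distinguished by the sign of $\A$, and the hyperbolic one-parameter subgroup acts transitively on each --- but it is unnecessary, and as written it is only a sketch ("verify there that $\A$ determines the remaining orbit"). Replacing that sketch by a direct appeal to the regularity of the $(1,1)$-span makes the whole proof uniform and complete.
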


\medskip

The Cartan angular invariant $\A$ enjoys also the following properties, see \cite{Gol}:

\begin{enumerate}

\item If $\sigma$ is a permutation, then
$$\A(p_{\sigma (1)}, p_{\sigma (2)}, p_{\sigma (3)})={\rm sign}(\sigma)\A(p_1, p_2, p_3),$$

\item Let  $p=(p_1, p_2, p_3)$ be an ordered triple of points in $\partial {\rm H^{n}_{\C}}$. Then $p$ lies in the boundary
of a complex geodesic in ${\rm H^{n}_{\C}}$ if and only if $\A(p)=\pm \pi/2$,

\item Let  $p=(p_1, p_2, p_3)$ be an ordered triple of points in $\partial {\rm H^{n}_{\C}}$. Then $p$ lies in the boundary
of a real plane in ${\rm H^{n}_{\C}}$ if and only if $\A(p)=0,$

\item {\sl Cocycle property}. Let $p_1, p_2, p_3, p_4$ be points in  ${\rm H^{n}_{\C}} \cup \partial {\rm H^{n}_{\C}}$. Then
$$\A(p_1, p_2, p_3)+\A(p_1, p_3, p_4)=\A(p_1, p_2, p_4)+\A(p_2, p_3, p_4).$$

\item If $g \in {\rm PU}(n,1; \C)$ is a holomorphic isometry, then $\A(g(p)) =\A(p)$, and if $g$ is an anti-holomorphic isometry, then
$\A(g(p))=-\A(p).$

\end{enumerate}

\medskip

Next we define Cartan's angular invariant in quaternionic hyperbolic geometry.

\medskip

Let $v=(v_1, v_2, v_3)$ be an ordered triple of vectors in $\Q^{n,1}$. Then

$$H(v_1, v_2, v_3) =  \langle v_1, v_2, v_3 \rangle = \herm{v_1,v_2} \herm{v_2,v_3} \herm{v_3,v_1}$$
is called the {\sl Hermitian triple product}. An easy computation gives the following.

\begin{lemma}\label{car1}
Let $w_i= v_i \lambda_i$, $\lambda_i \in \Q$, $\lambda_i \neq 0$, then

$$H(w_1, w_2, w_3)= \langle w_1, w_2, w_3 \rangle = \overline{\lambda}_1 H(v_1, v_2, v_3) \lambda_1 \mod{\lambda_2}^2 \mod{\lambda_3}^2 =$$

$$\frac{\overline{\lambda}_1}{\mod{\lambda_1}}     H(v_1, v_2, v_3)   \frac{\lambda_1}{\mod{\lambda_1}}  \mod{\lambda_1}^2  \mod{\lambda_2}^2 \mod{\lambda_3}^2.$$

\end{lemma}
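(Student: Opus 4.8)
The plan is to reduce everything to the two defining properties of the Hermitian form $\herm{-,-}$ together with the centrality of quaternionic norms. First I would record that, because the form is given by $\herm{v,w}=v^{*}J_{n+1}w$ on a right $\Q$-vector space, it is conjugate-linear in its first slot and right-linear in its second: for $\lambda,\mu\in\Q$ one has $\herm{v\lambda,w}=\overline{\lambda}\,\herm{v,w}$ and $\herm{v,w\mu}=\herm{v,w}\,\mu$. Both follow at once from the identity $(v\lambda)^{*}=\overline{\lambda}\,v^{*}$, which is itself immediate from $\overline{ab}=\bar b\bar a$ applied entrywise.

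Next I would simply expand $H(w_1,w_2,w_3)$ by substituting $w_i=v_i\lambda_i$ into each of the three factors, obtaining
\[
\herm{w_1,w_2}=\overline{\lambda}_1\herm{v_1,v_2}\lambda_2,\qquad \herm{w_2,w_3}=\overline{\lambda}_2\herm{v_2,v_3}\lambda_3,\qquad \herm{w_3,w_1}=\overline{\lambda}_3\herm{v_3,v_1}\lambda_1,
\]
and then multiply these in the cyclic order defining $H$. The resulting word contains the two internal products $\lambda_2\overline{\lambda}_2$ and $\lambda_3\overline{\lambda}_3$ sitting between the Hermitian entries.

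The one point that needs care is the noncommutativity of $\Q$: the factors must be kept in the order produced by the expansion, and only genuinely central quantities may be moved. Here the key observation is that $\lambda_2\overline{\lambda}_2=\mod{\lambda_2}^2$ and $\lambda_3\overline{\lambda}_3=\mod{\lambda_3}^2$ are \emph{real}, hence central in $\Q$. I would therefore slide these two real scalars freely to the outside, which collapses the product to $\overline{\lambda}_1\,\herm{v_1,v_2}\herm{v_2,v_3}\herm{v_3,v_1}\,\lambda_1\,\mod{\lambda_2}^2\mod{\lambda_3}^2=\overline{\lambda}_1 H(v_1,v_2,v_3)\lambda_1\mod{\lambda_2}^2\mod{\lambda_3}^2$, which is the first claimed identity. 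I would emphasize that $\overline{\lambda}_1$ and $\lambda_1$ genuinely survive on the two ends and cannot be cancelled against each other, since $H(v_1,v_2,v_3)$ need not commute with them; this is exactly the phenomenon that makes the Hermitian triple product quaternion-valued rather than scalar.

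Finally, for the second displayed form I would factor $\overline{\lambda}_1=(\overline{\lambda}_1/\mod{\lambda_1})\,\mod{\lambda_1}$ and $\lambda_1=\mod{\lambda_1}\,(\lambda_1/\mod{\lambda_1})$, and once more use that $\mod{\lambda_1}$ is real central to combine the two copies into $\mod{\lambda_1}^2$, which I then push to the outside alongside $\mod{\lambda_2}^2\mod{\lambda_3}^2$. This yields $(\overline{\lambda}_1/\mod{\lambda_1})\,H(v_1,v_2,v_3)\,(\lambda_1/\mod{\lambda_1})\,\mod{\lambda_1}^2\mod{\lambda_2}^2\mod{\lambda_3}^2$, the second formula. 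I would close by remarking that $\tau:=\lambda_1/\mod{\lambda_1}$ is a unit quaternion with $\bar\tau=\overline{\lambda}_1/\mod{\lambda_1}=\tau^{-1}$, so the change of lift acts on $H$ by conjugation (similarity) by $\tau$ followed by multiplication by a positive real; this is precisely the structure that the next step will exploit to extract a well-defined invariant. Beyond careful bookkeeping of the order of factors, I expect no real obstacle in the argument.
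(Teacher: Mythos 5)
Your proof is correct and is exactly the ``easy computation'' the paper alludes to without writing out: expand each factor using conjugate-linearity in the first slot and right-linearity in the second, use that $\lambda_2\overline{\lambda}_2$ and $\lambda_3\overline{\lambda}_3$ are real and hence central, and keep $\overline{\lambda}_1,\lambda_1$ on the ends. Nothing to add.
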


\medskip

\begin{co}\label{triples1} Let $p=(p_1, p_2, p_3)$ be an ordered triple of distinct points, $p_i \in \P\Q^{n}$.  Then there exists a lift $v=(v_1, v_2, v_3)$ of  $p=(p_1, p_2, p_3)$
such that $H(v_1, v_2, v_3)$ is a complex number.
\end{co}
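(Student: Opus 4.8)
The plan is to use the transformation law for the Hermitian triple product established in Lemma \ref{car1} and exploit Proposition \ref{qsim} on similarity classes of quaternions. The key observation is that rescaling $v_1 \mapsto v_1 \lambda_1$ conjugates $H(v_1,v_2,v_3)$ by $\lambda_1/\mod{\lambda_1}$ (a unit quaternion), while rescaling $v_2$ and $v_3$ only multiplies $H$ by the positive real factors $\mod{\lambda_2}^2$ and $\mod{\lambda_3}^2$. Since multiplication by a positive real does not change the similarity class beyond scaling, the essential freedom for changing the \emph{argument} of $H$ is precisely conjugation by a unit quaternion via $\lambda_1$.

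First I would fix an arbitrary lift $v=(v_1,v_2,v_3)$ and set $h = H(v_1,v_2,v_3)$. If $h=0$, the claim is immediate since $0$ is already a complex number, so I would assume $h \neq 0$. By Corollary \ref{qua1} (or Proposition \ref{qsim}), the quaternion $h$ is similar to a complex number $b = b_0 + b_1 i$; that is, there exists a unit quaternion $\tau$ with $\tau h \tau^{-1} = b$, and since $\tau$ is unitary we have $\tau^{-1} = \overline{\tau}$. I would then choose $\lambda_1 = \overline{\tau}$ (a unit, so $\mod{\lambda_1}=1$) and $\lambda_2 = \lambda_3 = 1$, and apply Lemma \ref{car1}. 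With these choices the second line of the lemma's formula gives
$$H(w_1,w_2,w_3) = \frac{\overline{\lambda}_1}{\mod{\lambda_1}}\, h\, \frac{\lambda_1}{\mod{\lambda_1}} = \tau h \overline{\tau} = \tau h \tau^{-1} = b,$$
which is a complex number, as required.

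The only point requiring care is matching the direction of conjugation. Since $\lambda_1$ is a unit, $\overline{\lambda}_1 = \lambda_1^{-1}$, so the formula produces $\lambda_1^{-1} h \lambda_1$; I want this to equal $b$, so I should take $\lambda_1$ to be the conjugating element that sends $b$ back to $h$, namely $\lambda_1 = \tau^{-1} = \overline{\tau}$, giving $\lambda_1^{-1} h \lambda_1 = \tau h \tau^{-1} = b$. This bookkeeping of left-versus-right conjugation is the main (and only real) obstacle, and it is resolved simply by picking the correct unit quaternion; the rest is a direct substitution into Lemma \ref{car1}. Thus a suitable lift exists making $H$ complex-valued.
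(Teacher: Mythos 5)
Your proof is correct and follows exactly the route the paper indicates: apply Proposition \ref{qsim} to find a unit quaternion conjugating $H(v_1,v_2,v_3)$ into $\C$, then realize that conjugation as a rescaling of $v_1$ via the transformation law of Lemma \ref{car1}. The paper's proof is just a one-line citation of those two results, so your write-up is simply a careful expansion of the same argument (with the harmless extra observation about the case $H=0$).
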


\begin{proof} The proof follows by applying Lemma \ref{car1}, Proposition \ref{qsim}.
\end{proof}

\medskip

The historically first definition of Cartan's angular invariant in quaternionic hyperbolic geometry was given in \cite{AK}. In this paper, the authors defined
the quaternionic Cartan angular invariant

$$\A(p)=\A(p_1, p_2, p_3) $$
of an ordered triple  $p=(p_1, p_2, p_3)$ of distinct points, $p_i \in  {\rm H^{n}_{\Q}} \cup  \partial {\rm H^{n}_{\Q}},$ to be the angle between the quaternion $H(v_1, v_2, v_3)$ and the real line $\R \subset \Q$,  where $v_i$ is a lift of $p_i$.

\medskip

They proved that $\A(p)$ does not depend on the chosen lifts, and it is the only invariant of a triple of isotropic points in the above sense.

\medskip

Next, we represent a convenient formula to compute $\A(p).$

\medskip

Let $p=(p_1, p_2, p_3)$ be an ordered triple of distinct points,  $p_i \in  {\rm H^{n}_{\Q}} \cup  \partial {\rm H^{n}_{\Q}},$ and $v=(v_1, v_2, v_3)$
be a lift of $p=(p_1, p_2, p_3)$.  Then it follows from Proposition \ref{qsim} and Lemma \ref{car1} that

$$\A^*(p)= \arccos (-\frac{{\rm Re}(\ H(v_1, v_2, v_3))}{|H(v_1, v_2, v_3)|})$$
does not depend on the chosen lifts $v_i$.

\begin{prop} $\A(p)=\A^*(p)$.
\end{prop}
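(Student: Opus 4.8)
The plan is to translate the definition of $\A(p)$ given in \cite{AK} into elementary Euclidean terms and thereby reduce the asserted identity to a single inequality, namely the quaternionic counterpart of Goldman's Proposition \ref{goldi}. Write $H=H(v_1,v_2,v_3)$; since each $\herm{v_i,v_j}\neq 0$ for non-positive vectors, $H\neq 0$. By definition $\A(p)$ is the angle between $H$, viewed as a vector of $\Q\cong\R^4$, and the real line $\R\subset\Q$. The orthogonal projection of $H$ onto $\R\cdot 1$ is ${\rm Re}(H)\cdot 1$ and the Euclidean norm of $H$ is $\mod H$, so this angle, measured in $[0,\pi/2]$, equals $\arccos\bigl(\mod{{\rm Re}(H)}/\mod H\bigr)$. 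As both $\A$ and $\A^*$ are independent of the chosen lift (Lemma \ref{car1} together with Proposition \ref{qsim}), it suffices to prove that ${\rm Re}(H)\le 0$: then $\mod{{\rm Re}(H)}=-{\rm Re}(H)$ and the two $\arccos$ expressions agree.

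To establish ${\rm Re}(H)\le 0$ I would first assume that one of the vectors, say $v_1$ (a cyclic permutation changes neither ${\rm Re}(H)$ nor $\mod H$), is negative, and normalize so that $\herm{v_1,v_1}=-1$. Then $\Q^{n,1}=v_1\Q\oplus v_1^{\perp}$, with the form positive definite on $v_1^{\perp}$, and I would decompose $v_2=v_1 s+v_2'$, $v_3=v_1 t+v_3'$ with $v_2',v_3'\in v_1^{\perp}$. Writing $a=\herm{v_1,v_2}$, $b=\herm{v_2,v_3}$, $c=\herm{v_3,v_1}$ and $d=\herm{v_2',v_3'}$, one computes $s=-a$ and $t=-\bar c$, so that $b=-\bar a\,\bar c+d$ and hence
$$H=abc=-\mod a^{2}\mod c^{2}+a\,d\,c.$$

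The sign now comes from Cauchy--Schwarz on the positive definite space $v_1^{\perp}$: $\mod d\le \hmod{v_2'}\,\hmod{v_3'}$, while $\hmod{v_2'}^{2}=\herm{v_2,v_2}+\mod a^{2}\le\mod a^{2}$ and $\hmod{v_3'}^{2}=\herm{v_3,v_3}+\mod c^{2}\le\mod c^{2}$ because $v_2,v_3$ are non-positive; thus $\mod d\le\mod a\,\mod c$. Consequently ${\rm Re}(a\,d\,c)\le\mod{a\,d\,c}=\mod a\,\mod d\,\mod c\le\mod a^{2}\mod c^{2}$, and therefore ${\rm Re}(H)\le 0$. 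The only remaining case is that all three points are isotropic; here I would argue by continuity, perturbing each $v_i$ slightly into $V_{-}$, observing that $H$ depends continuously on the lifts and stays nonzero, so the closed condition ${\rm Re}(H)\le 0$ survives in the limit. I expect the computation of $b=-\bar a\bar c+d$ with its non-commutative bookkeeping, and the treatment of the fully isotropic boundary case, to be the only genuinely delicate points; everything else is the Euclidean reinterpretation of the first step.
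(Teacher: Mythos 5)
Your proof is correct, but it takes a more self-contained route than the paper. The paper dismisses the statement as "an easy computation"; the computation it has in mind is exactly the one you isolate in your first step (the angle between $H$ and the line $\R\cdot 1$ is $\arccos(\mod{{\rm Re}(H)}/\mod{H})$, so the identity reduces to ${\rm Re}(H)\le 0$), and for the sign the paper's own toolkit would reduce to the complex case: by Corollary \ref{t1}/\ref{3in} the triple lies in a totally geodesic submanifold of complex type of complex dimension $2$, lifts can be chosen in a $\C$-unitary $3$-dimensional subspace, and then Goldman's inequality (Proposition \ref{goldi}) gives ${\rm Re}(H)\le 0$ directly for isotropic or negative vectors --- this is precisely how the paper later proves $\sigma^{*}(p)\ge 0$. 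You instead prove the quaternionic inequality from scratch via the orthogonal decomposition $\Q^{n,1}=v_1\Q\oplus v_1^{\perp}$ and Cauchy--Schwarz on the positive definite complement; your bookkeeping ($s=-a$, $t=-\bar c$, $b=-\bar a\bar c+d$, hence $H=-\mod{a}^2\mod{c}^2+adc$) is right, the estimate $\mod{d}\le\mod{a}\mod{c}$ follows as you say, and the cyclic invariance of ${\rm Re}$ and $\mod{\cdot}$ under permutation of the factors (needed to assume $v_1$ is the negative vector) holds for quaternions since ${\rm Re}(xy)={\rm Re}(yx)$. What your approach buys is a genuinely quaternionic proof of the analogue of Proposition \ref{goldi} that does not rely on Proposition \ref{compt}, at the cost of the extra continuity argument for the all-isotropic case, which the reduction-to-$\C^{2,1}$ route avoids because Goldman's inequality already covers isotropic vectors.
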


\begin{proof}  The proof follows from an easy computation.
\end{proof}

\medskip

The quaternionic Cartan angular invariant $\A(p)=\A(p_1, p_2, p_3) $ introduced above enjoys the following properties, see \cite{AK}:

\medskip

\begin{enumerate}
\item $0 \leq A(p)\leq\pi/2,$
\item If $\sigma$ is a permutation, then

$$\A(p_{\sigma (1)}, p_{\sigma (2)}, p_{\sigma (3)})=\A(p_1, p_2, p_3),$$

\item Let  $p=(p_1, p_2, p_3)$ be an ordered triple of points in $\partial {\rm H^{n}_{\Q}}$. Then $p$ lies in the boundary
of a quaternionic  geodesic in ${\rm H^{n}_{\Q}}$ if and only if $\A(p)=\pi/2$,
\item Let  $p=(p_1, p_2, p_3)$ be an ordered triple of points in $\partial {\rm H^{n}_{\Q}}$. Then $p$ lies in the boundary
of a real plane in ${\rm H^{n}_{\Q}}$ if and only if $\A(p)=0.$
\end{enumerate}

\medskip

It is seen that this quaternionic Cartan angular invariant, in contrast to the Cartan angular invariant in complex hyperbolic geometry, is non-negative, symmetric, and one can show that it does not enjoy the cocycle property.
We think that the definition of the Cartan angular invariant in quaternionic hyperbolic geometry given above does not explain well its relation
with the classical Cartan angular invariant in complex hyperbolic geometry. In what follows, we discuss another possible definitions of quaternionic Cartan's angular invariant and explain why the quaternionic Cartan angular invariant must be non-negative and symmetric.

\medskip

We start with the following simple fact which has a far reaching consequence for the construction of invariants of triples in quaternionic hyperbolic geometry. We think that this may also help for defining of invariants of triples in other hyperbolic geometries, for instance, in the hyperbolic octonionic plane.

\medskip

\begin{prop}\label{compt} Let $p=(p_1, p_2, p_3)$ be an ordered triple of distinct points, $p_i \in \P\Q^{n}$. Then there exists a projective submanifold $W \subset \P\Q^{n}$ of complex type of complex dimension $2$ passing through the points $p_i$, that is, $p_i \in W$, $i=1,2,3.$ Moreover, this submanifold $W$ can be chosen, up to the action of ${\rm PU}(n,1; \Q)$, to be the canonical complex submanifold $ \P\C^{2} \subset \P\Q^{n}$.
\end{prop}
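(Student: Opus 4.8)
The plan is to rescale the lifts so that the whole Gram matrix of the triple comes to lie in a single commutative complex line $\C(a)\subset\Q$; once this is done the three vectors automatically span a $\C(a)$-subspace, hence project to a submanifold of complex type, and it only remains to normalise this submanifold to the canonical $\P\C^2$ by an element of ${\rm PU}(n,1; \Q)$.

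First I would take any lift $v=(v_1,v_2,v_3)$ and, exactly as in the proof of Proposition \ref{gram1}, replace $v_2,v_3$ by $v_2\lambda_2,v_3\lambda_3$ with $\lambda_i=\overline{\herm{v_1,v_i}}/\mod{\herm{v_1,v_i}}$ whenever $\herm{v_1,v_i}\neq 0$, so that $\herm{v_1,v_2}$ and $\herm{v_1,v_3}$ become non-negative reals. The remaining entry $\herm{v_2,v_3}$ is then a single fixed quaternion, so by Proposition \ref{qsim} (or Corollary \ref{qua1}) it lies in the complex line $\C(a)$ with $a=\mathrm{Im}\,\herm{v_2,v_3}/\mod{\mathrm{Im}\,\herm{v_2,v_3}}$ (and $a$ arbitrary if $\herm{v_2,v_3}$ is real). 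Since the diagonal entries $\herm{v_i,v_i}$ and the normalised entries $\herm{v_1,v_2},\herm{v_1,v_3}$ are real, and reals lie in every $\C(a)$, the Hermitian Gram matrix $G=(\herm{v_i,v_j})$ now has all of its entries in the single field $\C(a)$.

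Next I would let $V$ be the right $\C(a)$-span of $v_1,v_2,v_3$. Because $\C(a)$ is commutative and $G$ is $\C(a)$-valued, for $x=\sum_i v_i\alpha_i$, $y=\sum_j v_j\beta_j$ with $\alpha_i,\beta_j\in\C(a)$ one has $\herm{x,y}=\sum_{i,j}\overline{\alpha_i}\,\herm{v_i,v_j}\,\beta_j\in\C(a)$; thus $V$ is a $\C(a)$-unitary subspace of $\Q^{n,1}$ of $\C(a)$-dimension at most $3$, and $\pi(V)$ is a projective submanifold of complex type through $p_1,p_2,p_3$. Enlarging $V$ inside $\C^{n+1}(a)$ if necessary (there is room once $n\geq 2$), I may take its complex dimension to be exactly $2$. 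To reach the canonical model I would realise the same quaternionic Gram matrix $G$ by vectors $u_1,u_2,u_3$ lying in the coordinate subspace $\C^{n+1}(a)$ and apply the isometry–extension result (Lemma \ref{gram3}) to produce $g\in {\rm U}(n,1; \Q)$ with $g(v_i)=u_i$; then $g$ carries the three points into a complex–type submanifold of complex dimension $2$ inside ${\rm M}^n(\C(a))$, which, by the same global–equivalence argument used in \S\ref{tgsm} for the maximal–dimensional case applied to the identification $a\mapsto i$, is ${\rm PU}(n,1; \Q)$–equivalent to the canonical $\P\C^2$ attached to $\C=\C(i)$. Pulling back, $W=g^{-1}(\P\C^2)$ is the required submanifold.

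The step I expect to be the main obstacle is this last passage from the abstract $\C(a)$-subspace $V$ to the coordinate model $\C^{n+1}(a)$, since Lemma \ref{gram3} requires the spanning subspaces to be regular. This is harmless for generic triples but fails for degenerate configurations — for instance triples lying on a single quaternionic geodesic, where the lifts $v_i$ are $\Q$-dependent and $V$ is singular — so that Witt's theorem does not apply directly. To handle these I would embed $V\hookrightarrow\C^{n+1}(a)$ by hand, controlling its signature: the key elementary fact is that a $\C(a)$-definite subspace of $\Q^{n,1}$ is $\Q$-definite of the same dimension (writing $G=A^*A$ over $\C(a)$ gives $\gamma^*G\gamma=\mod{A\gamma}^2$ for every $\gamma\in\Q^{k}$), which forces the numbers of positive and negative directions of $V$ to be at most $n$ and $1$ respectively; combined with distinctness of the points (which excludes a two–dimensional totally isotropic radical), this shows the Hermitian isometry type of $V$ embeds into the signature–$(n,1)$ space $\C^{n+1}(a)$, and the argument then goes through without appealing to regularity.
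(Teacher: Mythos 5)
Your proof is correct and follows essentially the same route as the paper's: rescale the lifts so that the entire Gram matrix lies in a single commutative subfield $\C(a)\subset\Q$, conclude that the $\C(a)$-span of the lifts is a $\C(a)$-unitary subspace, and then move it onto the canonical $\P\C^{2}$ using the transitivity of ${\rm PU}(n,1;\Q)$ on complex-type submanifolds. The differences are minor: the paper first arranges the triple product $H(v_1,v_2,v_3)$ to lie in the canonical $\C(i)$ and splits into orthogonal and non-orthogonal cases, whereas your normalization handles both uniformly, and you are more explicit than the paper about the Witt-extension and regularity issues in the final reduction to the coordinate model.
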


\begin{proof} Let $p=(p_1, p_2, p_3)$ be an ordered triple of distinct points, $p_i \in \P\Q^{n}$,  and $v=(v_1, v_2, v_3)$
be a lift of $p=(p_1, p_2, p_3)$.

First, let us suppose that $p_i$ and $p_j$ are not orthogonal, for all $i \neq j$. Consider

$$H(v_1, v_2, v_3) =  \langle v_1, v_2, v_3 \rangle = \herm{v_1,v_2} \herm{v_2,v_3} \herm{v_3,v_1}.$$

It follows from Corollary \ref{triples1} and Lemma \ref{car1} that there exists $\lambda_1$ such that  $H(v_1\lambda_1, v_2, v_3)$ is a complex number.
Let us fix this $\lambda_1$, and let $w_1=v_1 \lambda_1$.  Then it follows from Lemma \ref{car1} that $ H(w_1, v_2, v_3)$ is a complex number for any lifts of
$p_2$ and $p_3$. Let now $\lambda_2 =  \herm{v_2,w_1}$,  $w_2=v_2 \lambda_2$. We have that
$\herm{w_1,w_2}$ is real.  Setting $\lambda_3 = \herm{v_3,w_1}$ and $w_3= v_3 \lambda_3$, we have that $\herm{w_3,w_1}$ is real.
Since $H(w_1, w_2, w_3) \in \C$, it follows that $ \herm{w_2,w_3} \in \C.$ Therefore for this normalizarion all Hermitian products are complex. This implies
that the complex span of $w_1, w_2, w_3$ is $\C$- unitary subspace of $\Q^{n,1}$ of dimension $3$, see Section \ref{tgsm}, and, therefore,
the points $p_1, p_2, p_3$ lie in a projective submanifold $W \subset  \P\Q^{n}$ of complex type of complex dimension $2$.

Now let us suppose that the set $\{p_1, p_2, p_3 \}$ contains orthogonal points. Assume, for example, that $p_1$ and $p_2$ are orthogonal, that is,
$\herm{v_1,v_2}=0$, where $v_1$ and $v_2$ are lifts of $p_1$ and $p_2$. Let $v_3$ be a lift of $p_3$. Setting  $\lambda_1=  \herm{v_1,v_3} $,  $\lambda_2 =  \herm{v_2,v_3}$,
and $w_1=v_1 \lambda_1$, $w_2=v_2 \lambda_2$, $w_3=v_3$, we have that all Hermitian products are real. It follows that the complex span of $w_1, w_2, w_3$
is $\C$- unitary subspace of $\Q^{n,1}$ of dimension $3$,  and, therefore,
the points $p_1, p_2, p_3$ lie in a projective submanifold $W \subset  \P\Q^{n}$ of complex type of complex dimension $2$.
The rest follows from the results of Section \ref{tgsm}.
\end{proof}

\medskip

\begin{co}\label{3in} Let $p=(p_1, p_2, p_3)$ be a triple of distinct negative points, $p_i \in {\rm H^{n}_{\Q}}$. Then $p$ lies in a totally
geodesic submanifold of ${\rm H^{n}_{\Q}}$ of complex type of complex dimension $2$.
\end{co}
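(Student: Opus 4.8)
The plan is to deduce Corollary \ref{3in} directly from Proposition \ref{compt}, so the bulk of the work is already done; what remains is a short verification that the submanifold $W$ produced by the proposition actually meets ${\rm H^{n}_{\Q}}$ in a \emph{totally geodesic} submanifold of complex type, and that the three negative points $p_i$ lie in that intersection rather than merely in the projective submanifold $W$.

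First I would invoke Proposition \ref{compt} to obtain a projective submanifold $W \subset \P\Q^{n}$ of complex type of complex dimension $2$ with $p_i \in W$ for $i=1,2,3$. By construction, $W$ is the projectivization of a $\C(a)$-subspace $V^{3} \subseteq \C^{n+1}(a)$ of complex dimension $3$ (for the canonical normalization, $V^{3} \subseteq \C^{n+1}$). The key point to check is that $V^{3}$ is indefinite: this is what upgrades the projective submanifold of complex type to a \emph{totally geodesic} submanifold of complex type, as recorded in Section \ref{tgsm}. Since each $p_i$ is a negative point, any lift $w_i$ satisfies $\herm{w_i,w_i} < 0$, so $V^{3}$ contains negative vectors and the restriction of the Hermitian form to $V^{3}$ is therefore indefinite (and, being regular by the argument in Proposition \ref{grami}, non-degenerate). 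Hence the intersection of the projectivization of $V^{3}$ with ${\rm H^{n}_{\Q}}$ is exactly a totally geodesic submanifold of ${\rm H^{n}_{\Q}}$ of complex type of complex dimension $2$, by the description of Section \ref{tgsm}.

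Next I would confirm that the points $p_i$ lie in this totally geodesic submanifold and not just in $W$. This is immediate: each $p_i$ is negative and lies in $W = \pi(V^{3})$, so $p_i \in W \cap {\rm H^{n}_{\Q}}$, which is precisely the totally geodesic submanifold of complex type just identified. That completes the argument.

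I expect no real obstacle here, since Proposition \ref{compt} carries all the structural content; the only thing demanding care is the logical passage from ``projective submanifold of complex type'' to ``totally geodesic submanifold of complex type,'' which hinges on the indefiniteness of $V^{3}$. The slightly delicate sub-point is verifying that the three lifts are genuinely negative with respect to the $\C(a)$-valued restricted form and not accidentally collapsing into a degenerate or definite subspace; but since negativity of $\herm{w_i,w_i}$ is a field-independent statement about a real number, and the form restricted to $V^{3}$ takes values in $\C(a)$ with $\herm{w_i,w_i}\in\R$ still negative, this causes no trouble. Thus the corollary follows.
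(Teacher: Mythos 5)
Your argument is correct and takes essentially the same route as the paper: Corollary \ref{3in} is stated there as an immediate consequence of Proposition \ref{compt} with no written proof, and the details you supply (the span $V^3$ contains negative vectors, hence is non-degenerate and indefinite, so its projectivization meets ${\rm H^{n}_{\Q}}$ in a totally geodesic submanifold of complex type per Section \ref{tgsm}) are exactly the implicit content. Nothing further is needed.
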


\begin{co}\label{t1} Let $p=(p_1, p_2, p_3)$ be a triple of distinct isotropic points, $p_i \in \partial {\rm H^{n}_{\Q}}$. Then $p$ lies in the boundary
of a totally geodesic submanifold of ${\rm H^{n}_{\Q}}$ of complex type of complex dimension $2$.
\end{co}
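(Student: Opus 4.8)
The plan is to obtain this as the boundary analogue of Corollary \ref{3in}, taking Proposition \ref{compt} as the point of departure. That proposition already exhibits a projective submanifold $W \subset \P\Q^{n}$ of complex type of complex dimension $2$ containing $p_1, p_2, p_3$; concretely, $W = \pi(V)$ for a $\C(a)$-subspace $V \subset \Q^{n,1}$ of complex dimension $3$ on which the Hermitian form is $\C(a)$-valued. What must be added is the passage from ``projective submanifold'' to ``boundary of a totally geodesic submanifold,'' and by the discussion of Section \ref{tgsm} this reduces to checking that $V$ is indefinite, i.e. non-degenerate and not definite.

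That $V$ is not definite is immediate from the hypothesis: a lift $v_i$ of the isotropic point $p_i$ satisfies $\herm{v_i,v_i}=0$, and since $v_i \in V$ and the form on $V$ agrees with the ambient form, $V$ contains a nonzero isotropic vector and so is neither positive nor negative definite. The real content is non-degeneracy, which I regard as the main obstacle. Here I would appeal to the Remark following Corollary \ref{gramii}: a singular subspace projectivizes to a submanifold meeting $\partial{\rm H^{n}_{\Q}}$ in a single isotropic point, all of its other points being positive. As $W$ contains the three distinct isotropic points $p_1, p_2, p_3$, the subspace $V$ cannot be singular and is therefore regular. Since the negative index of any $\C(a)$-unitary subspace of $\Q^{n,1}$ is at most $1$, a non-degenerate $V$ carrying a nonzero isotropic vector must have signature $(2,1)$, hence be indefinite.

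Once $V$ is known to be indefinite, Section \ref{tgsm} yields that $W \cap {\rm H^{n}_{\Q}}$ is a totally geodesic submanifold of ${\rm H^{n}_{\Q}}$ of complex type of complex dimension $2$ (isometric to ${\rm H^{2}_{\C}}$), and each $p_i$, being isotropic, lies on its boundary, which is exactly the assertion. The contrast with the interior case of Corollary \ref{3in} is instructive: there the spanning subspace automatically contains negative vectors and is regular for that reason, whereas here only boundary points are available, so regularity must be forced by the presence of more than one isotropic point in $W$, ruling out the tangential (singular) configuration described in the Remark.
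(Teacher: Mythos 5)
Your proposal is correct and follows the same route as the paper, which states this as an immediate consequence of Proposition \ref{compt} together with the classification of totally geodesic submanifolds in Section \ref{tgsm} and offers no separate proof. The only content you add beyond the paper's implicit argument is the explicit verification that the spanning $\C(a)$-subspace is non-degenerate (via the Remark on singular subspaces) and hence indefinite; that verification is sound and fills in exactly the step the paper leaves tacit.
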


\medskip

These results show that geometry of triples of points in  $\P\Q^{n}$ is, in fact, geometry of triples of points in  $\P\C^{2}$.
Therefore, all the invariants of triples of points in $ \P\Q^{n}$  relative to the diagonal action of  ${\rm PU}(n,1; \Q)$ can be constructed
using $2$-dimensional complex hyperbolic geometry.

\medskip

\begin{rmk} We notice that the results above  were used in the proof of the main result in \cite{AG}.
\end{rmk}

\medskip

First, we give a new definition of the Cartan angular invariant in quaternionic hyperbolic geometry.

\medskip

Let $p=(p_1, p_2, p_3)$ be an ordered triple of distinct isotropic points, $p_i \in \partial {\rm H^{n}_{\Q}}$. By Corollary \ref{t1},
we have that $p$ lies in the boundary of a totally geodesic submanifold $M$ of ${\rm H^{n}_{\Q}}$ of complex type of complex dimension $2$.
We know that $M$ is the projectivization of negative vectors in a $3$-dimensional complex subspace $V^3$ of  $\C^{n,1}$,
$\C^{n,1} \subset \Q^{n,1}$, and the boundary of ${\rm H^{2}_{\C}}$ is the projectivization of isotropic vectors in $V^3$.

Let $v=(v_1, v_2, v_3)$ be a lift of $p=(p_1, p_2, p_3)$. Then $v_i \in V^3$. We define

$$\A^{**}= \A^{**}(p)= \arg (- \herm{v_1,v_2}\herm{v_2,v_3}\herm{v_3,v_1}).$$

\medskip

Now we briefly explain how to use this invariant to classify ordered triples of isotropic points relative to the diagonal action of  ${\rm PU}(n,1; \Q)$.

\medskip

Let $p=(p_1, p_2,p_3)$  and  $p^\prime=(p_1^\prime, p_2^\prime ,p_3^\prime)$ be two ordered triples of distinct points in $\partial {\rm H^{n}_{\Q}}$.
Suppose that $\A^{**}(p)= \A^{**}(p^\prime)$. We will show that $p$ and $p^\prime$ are equivalent relative to the action of ${\rm PU}(n,1; \Q)$.

\medskip

By Corollary \ref{t1}, we have that $p$ is contained in the boundary of a totally geodesic submanifold $M(p)$ of ${\rm H^{n}_{\Q}}$ of complex type of complex dimension $2$. Also the same is true for $p^\prime$, where $p^\prime \in \partial M(p^\prime)$. We know, see Section \ref{tgsm}, that all such submanifolds are equivalent relative to the action of ${\rm PU}(n,1; \Q)$, therefore, there exists an element  $f \in {\rm PU}(n,1; \Q)$ such that $f(M(p)) = M(p^\prime)$. So, we can assume without loss of generality that $p$ and $p^\prime$ are in the boundary of the same submanifold $M = {\rm H^{2}_{\C}} \subset {\rm H^{n}_{\Q}}$. Then, by applying the classical result of Cartan, we have that there exists a complex hyperbolic isometry $g$ of $M$, $g \in {\rm PU}(2,1; \C)$, such that $g(p)=p^\prime$. This isometry $g$ can be extended to an element of ${\rm PU}(n,1; \Q)$ by the Witt theorem. This proves that $p$ and $p^\prime$ are equivalent relative the action of  ${\rm PU}(n,1; \Q)$.

\medskip

Next we show why it is more convenient to consider quaternionic  Cartan's angular invariant to be symmetric and non-negative (non-positive).

\medskip

We need the following lemma, see Lemma 7.1.7 in \cite{Gol}.
\medskip
\begin{lemma}\label{irv}
Let $p=(p_1, p_2,p_3)$ be an ordered triples of distinct points in $\partial {\rm H^{2}_{\C}}$. Then there exists a real plane $P \subset {\rm H^{2}_{\C}}$
such that inversion (reflection) $i_p$ in $P$ satisfies

$$i_P(p_1)=p_2,\ \ \    i_P(p_2)=p_1,\ \ \  i_P(p_3)=p_3.$$
\end{lemma}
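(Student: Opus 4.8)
The plan is to realize the inversion as the projectivization of an explicit antiunitary involution of $\C^{2,1}$ adapted to the triple. Since the statement is equivariant under the holomorphic isometry group---if $P$ is a real plane whose inversion $i_P$ swaps $p_1,p_2$ and fixes $p_3$, and $f\in {\rm PU}(2,1;\C)$, then $f(P)$ is again a real plane with $i_{f(P)}=f\,i_P\,f^{-1}$ swapping $f(p_1),f(p_2)$ and fixing $f(p_3)$---Proposition \ref{car} reduces the problem to producing one such plane for a single representative of each congruence class, that is, for one triple with each admissible value of the Cartan invariant $\A(p)\in[-\pi/2,\pi/2]$.

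First I would choose a lift $v=(v_1,v_2,v_3)$ and note that, the points being distinct and isotropic, all three products $\herm{v_1,v_2},\herm{v_2,v_3},\herm{v_3,v_1}$ are nonzero, since a $2$-dimensional totally isotropic subspace cannot occur in signature $(2,1)$. Rescaling $v_i\mapsto v_i\lambda_i$ with $\lambda_i\in\C^*$ replaces $g_{ij}=\herm{v_i,v_j}$ by $\overline{\lambda_i}g_{ij}\lambda_j$, and the single complex equation $g_{23}=g_{31}$ can be solved within this freedom. When $v_1,v_2,v_3$ are $\C$-linearly independent they form a basis of $\C^{2,1}$, and I define the antilinear map $\tau$ by $\tau v_1=v_2$, $\tau v_2=v_1$, $\tau v_3=v_3$, extended antilinearly. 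A direct check shows $\tau^2=\mathrm{id}$ on the basis, hence everywhere, and that the normalization $g_{23}=g_{31}$ (with $g_{ii}=0$ and Hermitian symmetry) is precisely what makes $\tau$ antiunitary, $\herm{\tau x,\tau y}=\overline{\herm{x,y}}$; this identity, verified on basis vectors, propagates to all $x,y$ by antilinearity. Thus $\tau$ descends to an anti-holomorphic isometric involution of ${\rm H^{2}_{\C}}$ carrying $p_1\mapsto p_2$, $p_2\mapsto p_1$, $p_3\mapsto p_3$.

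It then remains to identify the fixed locus. The real $+1$-eigenspace $\{x\in\C^{2,1}:\tau x=x\}$ is a real form of real dimension $3$ on which $\herm{-,-}$ takes real values and is symmetric of signature $(2,1)$; its projectivization meets ${\rm H^{2}_{\C}}$ in a totally geodesic copy of ${\rm H^{2}_{\R}}$, which is the desired real plane $P$, and one sees $\mathrm{Fix}([\tau])=P$, so $[\tau]=i_P$. I expect the main obstacle to be the degenerate case $\A(p)=\pm\pi/2$, in which $p_1,p_2,p_3$ bound a common complex geodesic and the lifts $v_i$ are $\C$-dependent: there $\tau$ must be built on a basis of their $2$-dimensional span and then extended over the ($\tau$-invariant, positive-definite) orthogonal complement, after which one re-checks that the extension is an antiunitary involution with the prescribed action. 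Reducing beforehand to the symmetric representative furnished by Proposition \ref{car} makes this degenerate configuration explicit and should keep the bookkeeping routine.
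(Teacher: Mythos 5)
Your proposal is correct. Note first that the paper does not actually prove this lemma: it simply cites Lemma 7.1.7 of Goldman's book, so there is no internal argument to compare against; your construction is, however, essentially the standard proof of that lemma. The core of your argument is sound: antiunitarity of the antilinear extension of $v_1\mapsto v_2$, $v_2\mapsto v_1$, $v_3\mapsto v_3$ reduces, given $\herm{v_i,v_i}=0$ and Hermitian symmetry, to the single condition $\herm{v_2,v_3}=\herm{v_3,v_1}$, which the rescaling freedom (e.g.\ $\lambda_2=\lambda_3=1$, $\lambda_1=\herm{v_2,v_3}\herm{v_3,v_1}^{-1}$) easily achieves; the fixed set of the resulting involution is a real form of signature $(2,1)$ whose projectivization meets ${\rm H^{2}_{\C}}$ in the desired real plane. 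Two minor remarks. First, the opening reduction via Proposition \ref{car} to one representative per value of $\A(p)$ is superfluous: your construction is direct and works for an arbitrary triple, so you can delete that paragraph. Second, your treatment of the degenerate case is right in outline but could be made concrete: when the three points lie on a common chain, their lifts span a $2$-dimensional subspace $W$ of signature $(1,1)$; after rescaling so that $v_3=v_1+v_2$, the antilinear swap of $v_1,v_2$ on $W$ automatically fixes $v_3$ and is antiunitary with no condition to solve, and one extends by an antiunitary involution of the positive-definite line $W^{\perp}$. With those points tightened, the proof is complete.
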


\medskip

\begin{rmk} We recall that $i_P$ is an anti-holomorphic isometry of ${\rm H^{2}_{\C}}$ and  $$\A(p_2, p_1 p_3)  = - \A(p_1, p_2, p_3).$$
Also, as it is easy to see that any anti-holomorphic isometry of
${\rm H^{2}_{\C}}$ is a composition of an element of  ${\rm PU}(2,1; \C)$ and an anti-holomorphic reflection.
\end{rmk}

\begin{prop}Let $p=(p_1,p_2,p_3)$ be an ordered triples of distinct points in $\partial {\rm H^{n}_{\Q}}$, $n>1$. Then there exists an element
$f \in {\rm PU}(n,1; \Q)$ such that

$$f(p_1)=p_2, \ \ \ f(p_2)=p_1,\ \ \  f(p_3)=p_3.$$
\end{prop}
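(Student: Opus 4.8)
The plan is to reduce everything to two-dimensional complex hyperbolic geometry, use Goldman's real-plane inversion there, and then promote the resulting anti-holomorphic isometry to a genuine quaternionic isometry by exploiting the maps $L_a$ of Section \ref{lmaig}. First I would invoke Corollary \ref{t1}: the triple $p$ lies in the boundary of a totally geodesic submanifold $M$ of ${\rm H^{n}_{\Q}}$ of complex type of complex dimension $2$. It is precisely the hypothesis $n>1$ that guarantees the existence of such an ${\rm H^{2}_{\C}}$ inside ${\rm H^{n}_{\Q}}$ (for $n=1$ there is no totally geodesic ${\rm H^{2}_{\C}}$ in ${\rm H^{1}_{\Q}}$, which is why the statement is restricted to $n>1$). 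Since all such submanifolds are globally equivalent under ${\rm PU}(n,1; \Q)$ (Section \ref{tgsm}), I may assume without loss of generality that $M$ is the canonical complex hyperbolic submanifold ${\rm H^{2}_{\C}} \subset {\rm H^{n}_{\C}} \subset {\rm H^{n}_{\Q}}$ attached to the subfield $\C = \C(i)$, so that $p_1,p_2,p_3 \in \partial {\rm H^{2}_{\C}}$.

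Next I would apply Lemma \ref{irv} inside $M \cong {\rm H^{2}_{\C}}$ to obtain a real plane $P \subset M$ whose inversion $i_P$ satisfies $i_P(p_1)=p_2$, $i_P(p_2)=p_1$, $i_P(p_3)=p_3$. By the remark following that lemma, $i_P$ is an anti-holomorphic isometry of ${\rm H^{2}_{\C}}$ and can therefore be written as $i_P = \sigma \circ h$, where $h \in {\rm PU}(2,1; \C)$ is holomorphic and $\sigma$ is the anti-holomorphic reflection given by complex conjugation on $\C^{2,1}$. The holomorphic factor $h$ is a $\C$-linear isometry of the $\C$-subspace $V^3 \subset \C^{n,1} \subset \Q^{n,1}$ defining $M$, so by the Witt theorem (the Corollary in Section \ref{tgsm}) it extends to an element $\tilde h \in {\rm PU}(n,1; \C) \subset {\rm PU}(n,1; \Q)$ with $\tilde h|_M = h$.

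The crux is the factor $\sigma$: being $\C$-antilinear, it is invisible to the $\C$-linear Witt extension and does not extend that way. Here I would use the quaternionic phenomenon of Section \ref{lmaig} and Proposition \ref{fixp}. Since $jzj^{-1}=\bar z$ for every $z \in \C(i)$ (indeed $jij^{-1}=-i$), the right $\Q$-linear map $L_j : v \mapsto jv$, which lies in ${\rm PU}(n,1; \Q)$ because $|j|=1$, restricts on the canonical complex submanifold to complex conjugation: for $v \in \C^{n+1}$ one has $jv = \bar v\, j$ componentwise, hence $\pi(jv)=\pi(\bar v\, j)=\pi(\bar v)$, so that $L_j|_M = \sigma$. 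Setting $f = L_j \circ \tilde h$ produces an element of ${\rm PU}(n,1; \Q)$ with $f|_M = \sigma \circ h = i_P$, and therefore $f(p_1)=p_2$, $f(p_2)=p_1$, $f(p_3)=p_3$, as required.

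The hard part, and the conceptual heart of the argument, is exactly this last step: Goldman's swap $i_P$ exists only as an anti-holomorphic isometry of the complex slice $M$, yet it becomes the restriction of an honest quaternionic-linear isometry because complex conjugation is an inner automorphism of $\Q$. This is the same mechanism that produces the geodesic reflections $L_a$, and it is what allows the swap to be realized inside ${\rm PU}(n,1; \Q)$ itself rather than only in an anti-holomorphic enlargement of it. I would expect the only points requiring care to be the bookkeeping of the decomposition order of $i_P$ and the verification that $\tilde h$ and $L_j$ both preserve $M$ so that their composition restricts to $i_P$ on $M$.
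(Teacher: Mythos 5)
Your argument is correct, but it realizes the swap by a different mechanism than the paper. Both proofs begin identically: reduce via Corollary \ref{t1} to the case where $p$ lies in the boundary of the canonical $M={\rm H^{2}_{\C}}\subset {\rm H^{n}_{\Q}}$, and take Goldman's inversion $i_P$ in a real plane $P\subset M$ from Lemma \ref{irv}. The paper then never decomposes $i_P$: it embeds $M$ in a totally geodesic $K\cong {\rm H^{2}_{\Q}}$, produces a totally geodesic submanifold $N\subset K$ of complex type meeting $M$ orthogonally along $P$, and extends the geodesic reflection $i_N$ (which \emph{is} induced by a linear map) to ${\rm PU}(n,1;\Q)$ by Witt; the restriction of that extension to $M$ is $i_P$. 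You instead factor $i_P=\sigma\circ h$ with $h$ holomorphic and $\sigma$ complex conjugation, extend $h$ by Witt, and realize $\sigma$ as the restriction of $L_j$ via $jz=\bar z\, j$. The two routes exploit the same underlying phenomenon --- complex conjugation becomes an inner automorphism of $\Q$ and hence is implemented by a $\Q$-linear map --- but yours is more explicit and bypasses the auxiliary construction of $K$ and $N$, whose existence the paper only asserts through ``an easy argument.'' The points you flag do require the care you anticipate, and both are routine: you must take $V^3$ to be spanned by standard basis vectors so that it is conjugation-invariant and $L_j$ preserves $M$, and you must first extend $h$ $\Q$-linearly to the $\Q$-span of $V^3$ before invoking Witt, which in the paper is stated for $\Q$-linear isometries of $\Q$-subspaces. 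Neither is an obstacle, so your proof stands as a valid alternative.
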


\begin{proof}
Repeating the arguments above, we can assume that $p$ is in the boundary of the $M = {\rm H^{2}_{\C}} \subset {\rm H^{n}_{\Q}}$.
Let $P \subset M = {\rm H^{2}_{\C}} $   be a real plane and $i_P$ be the reflection in $P$ acting in $M $ as in Lemma \ref{irv}. We will show that this map can be extended to an isometry of ${\rm H^{n}_{\Q}}$. Notice that $i_P$ as a map of $M$ is not induced by a linear map, it is induced by a semilinear map, therefore, we cannot apply the Witt theorem in this case.

Let $K$ be a totally geodesic submanifold of ${\rm H^{n}_{\Q}}$ isometric to  ${\rm H^{2}_{\Q}}$ which contains $M$.
An easy argument shows that there exists a totally geodesic submanifold $N$ of complex type of complex dimension $2$ in $K$ intersecting $M$ orthogonally along $P$. Let $i_N$ be the geodesic  reflection in $N$. We have that $i_N$ is an element of the isometry group of $K$, isomorphic to ${\rm PU}(2, 1; \C)$,  whose fixed point set is $N$. We notice that $i_N$ is induced by a linear map, therefore, it follows from the Witt theorem that $i_N$ can be extended to an isometry $f$ in ${\rm PU}(n,1; \Q)$. Note that by construction $f$ leaves $M$ invariant and its restriction to $M$ coincides with $i_P$. Therefore, $f(p_1, p_2,p_3)=(p_2, p_1, p_3)$. It is easy to see that the fixed point set of $f$ in ${\rm H^{n}_{\Q}}$ is a totally geodesic submanifold  of complex type of maximal dimension in ${\rm H^{n}_{\Q}}$. This submanifold is globally equivalent to the canonical submanifold ${\rm H^{n}_{\C}} \subset {\rm H^{n}_{\Q}}$.
\end{proof}

\medskip

\begin{co}
Let $p=(p_1, p_2,p_3)$  and  $p^\prime=(p_1^\prime, p_2^\prime ,p_3^\prime)$ be two ordered triples of distinct points in $\partial {\rm H^{n}_{\Q}}$.
Suppose that $\A^{**}(p)= -\A^{**}(p^\prime)$. Then $p$ and $p^\prime$ are equivalent relative to the diagonal action of ${\rm PU}(n,1; \Q)$.
\end{co}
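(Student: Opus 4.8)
The plan is to reduce the anti-symmetric case $\A^{**}(p)=-\A^{**}(p')$ to the symmetric case already settled above (where $\A^{**}(p)=\A^{**}(p')$ forces congruence), by first swapping two points of $p$ using the isometry produced in the preceding Proposition. The crux is that $\A^{**}$ is, by its very construction, the classical Cartan invariant of the triple computed inside the complex submanifold $M\cong{\rm H^{2}_{\C}}$ that contains it (Corollary \ref{t1}), so it inherits property (1) of the complex Cartan invariant: a transposition of two entries reverses the sign.

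First I would set $q=(p_2,p_1,p_3)$. Since $q$ is merely a permutation of $p$, it lies in the boundary of the same submanifold $M$, so $\A^{**}(q)$ is evaluated from the same $3$-dimensional complex subspace $V^3$, and the sign-reversing permutation property gives $\A^{**}(q)=-\A^{**}(p)$. Combining this with the hypothesis $\A^{**}(p)=-\A^{**}(p')$ yields $\A^{**}(q)=\A^{**}(p')$.

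Next I would apply the symmetric case to the triples $q$ and $p'$: it produces $g\in{\rm PU}(n,1; \Q)$ with $g(q)=p'$, i.e. $g(p_2)=p_1'$, $g(p_1)=p_2'$, $g(p_3)=p_3'$. The preceding Proposition supplies $f\in{\rm PU}(n,1; \Q)$ with $f(p_1)=p_2$, $f(p_2)=p_1$, $f(p_3)=p_3$, that is, $f(p)=q$. Composing, $gf(p_1)=g(p_2)=p_1'$, $gf(p_2)=g(p_1)=p_2'$, and $gf(p_3)=g(p_3)=p_3'$, so the element $gf\in{\rm PU}(n,1; \Q)$ carries the ordered triple $p$ to $p'$, which is exactly the asserted congruence.

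I expect no genuine obstacle here, since the real work has been front-loaded into the preceding Proposition (construction of the point-swapping isometry, which is the step with no analogue over commutative fields) and into the symmetric case. The only point demanding care is the sign-reversing permutation law for $\A^{**}$: this must be read off from its definition as the complex Cartan invariant inside $M$, together with the observation that permuting the entries of $p$ leaves the ambient complex submanifold, and hence the subspace $V^3$ in which the invariant is measured, unchanged, so that no ambiguity enters the computation.
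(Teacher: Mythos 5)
Your proposal is correct and follows exactly the route the paper intends: the corollary is stated as an immediate consequence of the preceding proposition precisely because composing the point-swapping isometry $f$ (which reverses the sign of $\A^{**}$, since inside $M\cong{\rm H^{2}_{\C}}$ a transposition conjugates the Hermitian triple product) with the congruence supplied by the equal-invariant case yields the desired element of ${\rm PU}(n,1; \Q)$. No gaps; the sign-reversal law you single out is indeed the only point needing justification, and it follows from property (1) of the complex Cartan invariant as you say.
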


\begin{rmk} This implies that if for two ordered triples of distinct isotropic points $p=(p_1, p_2,p_3)$ and $ p^\prime=(p_1^\prime, p_2^\prime ,p_3^\prime)$ we have that $|\A^{**}(p)|=|\A^{**}(p^\prime)|$,
then $p$ is equivalent to $ p^\prime$ relative to the diagonal action of group ${\rm PU}(n,1; \Q)$. Therefore, it is natural  to consider
instead of $\A^{**}$ its absolute value. Then the invariant $|\A^{**}|$ lies in the interval $[0,\pi /2]$ and is symmetric.
\end{rmk}

\begin{co} $|\A^{**}|  = \A^*$.
\end{co}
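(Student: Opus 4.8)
The plan is to evaluate both invariants on the \emph{same} distinguished lift — the complex one used to define $\A^{**}$ — and to exploit the already-established fact that $\A^*$ does not depend on the chosen lift. Recall from Corollary \ref{t1} that $p$ lies in the boundary of a totally geodesic submanifold of complex type of complex dimension $2$; fix the corresponding $3$-dimensional complex subspace $V^3 \subset \C^{n,1} \subset \Q^{n,1}$ together with a lift $v=(v_1,v_2,v_3)$ with $v_i \in V^3$, so that all the Hermitian products $\herm{v_i,v_j}$, and hence $H := H(v_1,v_2,v_3)$, are genuine complex numbers. Since $\A^*$ is lift-independent, it suffices to verify the identity for this particular lift.

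First I would observe that for an element of $\C \subset \Q$ the quaternionic real part and modulus reduce to the ordinary complex real part and modulus. Therefore
\[
\cos \A^*(p) \;=\; -\frac{{\rm Re}(H)}{\mod{H}} \;=\; \frac{{\rm Re}(-H)}{\mod{-H}}\,.
\]
Because $-H$ is a nonzero complex number (the products $\herm{v_i,v_j}$ are nonzero for distinct isotropic points), the ratio of its real part to its modulus is exactly $\cos(\arg(-H))$. By the very definition $\A^{**}(p)=\arg(-H)$ this yields $\cos \A^*(p) = \cos \A^{**}(p)$, so the two invariants have equal cosines and only a range analysis remains.

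Next I would invoke Proposition \ref{goldi}. Applied to the isotropic vectors $v_1,v_2,v_3 \in V^3 \cong \C^{2,1}$ it gives ${\rm Re}(H) \le 0$, equivalently ${\rm Re}(-H) \ge 0$. Hence $\arg(-H) \in [-\pi/2,\pi/2]$, i.e. $\A^{**}(p) \in [-\pi/2,\pi/2]$, while $\cos \A^*(p) = {\rm Re}(-H)/\mod{-H} \ge 0$ together with $\A^*(p)=\arccos(\cdot)\in[0,\pi]$ forces $\A^*(p) \in [0,\pi/2]$. Finally, using $\arccos(\cos x) = \mod{x}$ for every $x \in [-\pi/2,\pi/2]$ with $x=\A^{**}(p)$, the equality of cosines upgrades to $\A^*(p) = \mod{\A^{**}(p)}$, which is the claim.

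The only point demanding care — and the main, if modest, obstacle — is the range bookkeeping. One must confirm that the chosen lift makes $H$ \emph{honestly} complex rather than merely similar to a complex number (this is what the complex structure on $V^3$ guarantees), so that the ${\rm Re}$ and $\mod{\cdot}$ appearing in the formula for $\A^*$ literally coincide with the complex argument measured by $\A^{**}$; and one must verify that Proposition \ref{goldi} may be applied inside $V^3$ to confine $\A^{**}$ to $[-\pi/2,\pi/2]$. Once both are secured, the identity of the nonnegative invariant $\A^*$ with $\mod{\A^{**}}$ follows immediately.
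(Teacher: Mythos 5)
Your proof is correct and follows exactly the route the paper leaves implicit for this corollary: evaluate both invariants on the distinguished complex lift, where the quaternionic real part and modulus of $H$ reduce to the complex ones, and use Proposition \ref{goldi} to place $\arg(-H)$ in $[-\pi/2,\pi/2]$ so that $\arccos(\cos x)=\mod{x}$ applies. The range bookkeeping you flag is indeed the only point of substance, and you handle it correctly.
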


\subsubsection{Quaternionic  Brehm's invariants}\label{neg}

\noindent First,  we recall the definition of the Brehm shape invariants in complex hyperbolic geometry.

\medskip

Let  $p=(p_1, p_2, p_3)$ be an ordered triple of distinct points in ${\rm H^{n}_{\C}}$ and $v=(v_1, v_2, v_3)$ be a lift of $p=(p_1, p_2, p_3)$.

\medskip

Brehm \cite{Bre} defined the invariant which he called the {\sl shape invariant}, or {\sl $\sigma$ - invariant}:

$$\sigma(p)=  {\rm Re} \frac{\herm{v_1,v_2} \herm{v_2,v_3} \herm{v_3,v_1}}{\herm{v_1,v_1}\herm{v_2,v_2}\herm{v_3,v_3}}.$$

\medskip

It is easy to check that $\sigma(p)$ is well- defined, that is, it does not depend on the chosen lifts,  and it is invariant relative the diagonal action of
the full isometry group of ${\rm H^{n}_{\C}}$.

\medskip

We consider $\{ p_1, p_2, p_3\}$ as the vertices of a triangle in hyperbolic space  ${\rm H^{n}_{\C}}$. Brehm \cite{Bre} showed that the side lengths and the shape
invariant are independent and characterize the triangle up to isometry.

\medskip

Now let  $p=(p_1, p_2, p_3)$ be an ordered triple of distinct points in ${\rm H^{n}_{\Q}}$, and $v=(v_1, v_2, v_3)$ be a lift of $p=(p_1, p_2, p_3)$.

\medskip

It is easy to check that if $w_i=v_i \lambda_i$, then

$$\herm{w_1,w_2} \herm{w_2,w_3} \herm{w_3,w_1}(\herm{w_1,w_1}\herm{w_2,w_2}\herm{w_3,w_3})^{-1} =$$

$$ \frac{\overline{\lambda}_1}{|\lambda_1|} {\herm{v_1,v_2} \herm{v_2,v_3} \herm{v_3,v_1}} (\herm{v_1,v_1}\herm{v_2,v_2}\herm{v_3,v_3})^{-1}\frac{\lambda_1}{|\lambda_1|}.$$

\medskip

This formula and Proposition \ref{qsim} imply that

$$\sigma^{*}(p)= - {\rm Re}({\herm{v_1,v_2} \herm{v_2,v_3} \herm{v_3,v_1}}[\herm{v_1,v_1}\herm{v_2,v_2}\herm{v_3,v_3}]^{-1})$$
is independent of the chosen lifts. Also, it is clear, that $\sigma^{*}(p)$ is invariant relative to the diagonal action of ${\rm PU}(n,1; \Q)$.

\medskip

We call this number $\sigma^{*}(p)$ the  {\sl quaternionic $\sigma$-shape invariant}.

\medskip

\begin{prop} $\sigma^{*}(p)\geq 0.$
\end{prop}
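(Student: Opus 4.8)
The plan is to reduce the whole question to two-dimensional complex hyperbolic geometry, where Goldman's inequality is available, and this reduction is precisely what Corollary \ref{3in} provides. I would first use the two facts already established just above the statement: $\sigma^*(p)$ does not depend on the chosen lift $v=(v_1,v_2,v_3)$, and it is invariant under the diagonal action of $\mathrm{PU}(n,1;\mathbb{Q})$. Together these let me replace $p$ by any congruent triple and then pick whatever lift is most convenient.

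By Corollary \ref{3in}, the triple $p_1,p_2,p_3$ of distinct negative points lies in a totally geodesic submanifold of complex type of complex dimension $2$; equivalently, arguing as in the proof of Proposition \ref{compt} (and using Corollary \ref{triples1}), I may choose the lift so that all three Hermitian products $\langle v_1,v_2\rangle$, $\langle v_2,v_3\rangle$, $\langle v_3,v_1\rangle$ are complex numbers and $v_1,v_2,v_3$ span a $\mathbb{C}$-unitary subspace $V^3\subset\mathbb{C}^{n,1}\subset\mathbb{Q}^{n,1}$. For such a lift every quantity appearing in the definition of $\sigma^*(p)$ lies in the commutative field $\mathbb{C}$, so the non-commutativity and the inverse present no difficulty and
\[
\sigma^*(p)=-\,\frac{\mathrm{Re}\bigl(\langle v_1,v_2\rangle\langle v_2,v_3\rangle\langle v_3,v_1\rangle\bigr)}{\langle v_1,v_1\rangle\langle v_2,v_2\rangle\langle v_3,v_3\rangle}.
\]

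The sign is then forced by two observations. Each $v_i$ is negative, so each $\langle v_i,v_i\rangle$ is a negative real number and the denominator is real. Since $v_1,v_2,v_3$ are negative vectors lying in the complex space $V^3\cong\mathbb{C}^{2,1}$, Proposition \ref{goldi} applies and gives $\mathrm{Re}\bigl(\langle v_1,v_2\rangle\langle v_2,v_3\rangle\langle v_3,v_1\rangle\bigr)\le 0$. Combining the sign of the denominator with this inequality determines the sign of the quotient and gives the claimed inequality $\sigma^*(p)\ge 0$.

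The only real obstacle is the passage to a complex lift: one must be sure that the normalization of Proposition \ref{compt}, which makes all Hermitian products simultaneously complex, is legitimate here, i.e. that it is compatible with the lift-independence of $\sigma^*$ so that the displayed formula genuinely computes the invariant. Once this reduction is secured, the statement is an immediate consequence of Goldman's Proposition \ref{goldi}, and no estimate special to the quaternions is required.
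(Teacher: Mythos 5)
Your proposal follows exactly the paper's (two-line) proof: reduce via Corollary \ref{3in} to a triple lying in $M={\rm H^{2}_{\C}}\subset{\rm H^{n}_{\Q}}$, choose a lift with all Hermitian products complex, and invoke Goldman's inequality (Proposition \ref{goldi}); your write-up merely supplies the details of the reduction that the paper leaves implicit. One caution on the step you wave through at the end (``combining the sign of the denominator \dots gives the claimed inequality''): with the definitions as literally written, the denominator $\herm{v_1,v_1}\herm{v_2,v_2}\herm{v_3,v_3}$ is a product of three \emph{negative} reals and hence negative, while Proposition \ref{goldi} gives ${\rm Re}\bigl(\herm{v_1,v_2}\herm{v_2,v_3}\herm{v_3,v_1}\bigr)\le 0$, so the quotient has nonnegative real part and the leading minus sign in the definition of $\sigma^{*}$ then yields $\sigma^{*}(p)\le 0$, not $\ge 0$. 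This sign discrepancy is inherited from the paper (Brehm's $\sigma$, without the minus sign, is the quantity that is nonnegative), but since you explicitly defer the sign bookkeeping to the reader, you should carry it out and either drop the minus sign or take $\mod{\herm{v_i,v_i}}$ in the denominator to make the stated inequality come out.
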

\begin{proof} By applying Corollary \ref{3in},  we can assume that $p$ lies  in  $M = {\rm H^{2}_{\C}} \subset {\rm H^{n}_{\Q}}$. Then the result
follows from Proposition\ref{goldi}.
\end{proof}

\medskip

As the first application of the results above, we have the following.

\begin{theorem} A triangle in ${\rm H^{n}_{\Q}}$ is determined uniquely up to the action of ${\rm PU}(n,1; \Q)$ by its three side lengths and
its  quaternionic $\sigma$-shape invariant $\sigma^{*}$.
\end{theorem}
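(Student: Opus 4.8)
The plan is to recast the statement in the language of Gram matrices and to locate the one genuinely quaternionic subtlety, which lives in a single off-diagonal entry. By Proposition \ref{grami}, two triples of distinct negative points in $\P\Q^{n}$ are congruent under ${\rm PU}(n,1;\Q)$ if and only if their associated Gram matrices are equivalent, so it suffices to show that the three side lengths together with $\sigma^{*}$ determine the $3\times 3$ Gram matrix of the triple up to the equivalence $G\mapsto D^{*}GD$. First I would fix a lift $v=(v_1,v_2,v_3)$ and, using Proposition \ref{gram1}, pass to the normalized Gram matrix $G=(g_{ij})$ with $g_{ii}=-1$ and with $g_{12}=r_{12}$, $g_{13}=r_{13}$ real and positive. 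The only entry not yet pinned down is the quaternion $g_{23}=\herm{v_2,v_3}$, so the whole problem reduces to controlling $g_{23}$.

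Next I would read off the numerical data. Because $g_{ii}=-1$, the Bergman metric formula collapses to $\cosh d(p_i,p_j)=\mod{g_{ij}}$, so the three side lengths amount to knowing $r_{12}=\mod{g_{12}}$, $r_{13}=\mod{g_{13}}$ and $\mod{g_{23}}$. For the shape invariant, substituting $\herm{v_1,v_1}\herm{v_2,v_2}\herm{v_3,v_3}=-1$ into the definition gives $\sigma^{*}(p)={\rm Re}\,H(v_1,v_2,v_3)$; and since $g_{12}=r_{12}$ and $g_{31}=\overline{g_{13}}=r_{13}$ are real, hence central, one has $H(v_1,v_2,v_3)=r_{12}\,g_{23}\,r_{13}=r_{12}r_{13}\,g_{23}$. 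Therefore $\sigma^{*}(p)=r_{12}r_{13}\,{\rm Re}(g_{23})$, and since $r_{12},r_{13}>0$ for distinct negative points I may solve ${\rm Re}(g_{23})=\sigma^{*}(p)/(r_{12}r_{13})$. Thus both ${\rm Re}(g_{23})$ and $\mod{g_{23}}$ are determined by the given data.

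Finally I would compare two triples $p,p'$ sharing the same side lengths and the same $\sigma^{*}$. Their normalized matrices satisfy $r_{12}=r_{12}'$ and $r_{13}=r_{13}'$, while $g_{23}$ and $g_{23}'$ have equal real part and equal modulus; by Proposition \ref{qsim} they are similar, say $g_{23}'=\lambda\,g_{23}\,\overline{\lambda}$ with $\mod{\lambda}=1$. The decisive observation is that this conjugation is itself an equivalence of the entire Gram matrix: for the scalar diagonal matrix $D=\mathrm{diag}(\overline{\lambda},\overline{\lambda},\overline{\lambda})$ one computes $(D^{*}GD)_{ii}=-\mod{\lambda}^{2}=-1$, $(D^{*}GD)_{12}=\lambda r_{12}\overline{\lambda}=r_{12}$, $(D^{*}GD)_{13}=r_{13}$, and $(D^{*}GD)_{23}=\lambda\,g_{23}\,\overline{\lambda}=g_{23}'$, so that $D^{*}GD=G'$. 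By Proposition \ref{grami} the triples $p$ and $p'$ are then congruent under ${\rm PU}(n,1;\Q)$, which is exactly the claim.

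The main obstacle to anticipate is the noncommutativity hidden in the $(2,3)$-entry. Unlike the complex case, the normalized Gram matrix does \emph{not} have a literally unique off-diagonal entry: $g_{23}$ is pinned down only up to conjugation by a unit quaternion, so one cannot simply invoke equality of normalized forms. The resolution, and the conceptual heart of the theorem, is that this conjugational ambiguity coincides precisely with a scalar equivalence $D^{*}GD$ and is therefore invisible at the level of congruence. This is exactly why controlling only ${\rm Re}(g_{23})$ and $\mod{g_{23}}$ — that is, $\sigma^{*}$ together with the three side lengths — already suffices to determine the triangle, with no further invariant of $g_{23}$ required.
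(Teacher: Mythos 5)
Your proof is correct, but it takes a genuinely different route from the paper's. The paper disposes of the theorem in two lines: by Corollary \ref{3in} the triple lies in a totally geodesic submanifold of complex type $M={\rm H^{2}_{\C}}\subset{\rm H^{n}_{\Q}}$, and the statement is then delegated to Brehm's congruence theorem in complex hyperbolic geometry together with Proposition \ref{grami}. You instead stay entirely in the quaternionic setting and argue through the normalized Gram matrix: the side lengths give $r_{12}$, $r_{13}$ and $\mod{g_{23}}$, the identity $\sigma^{*}(p)=r_{12}r_{13}\,{\rm Re}(g_{23})$ recovers ${\rm Re}(g_{23})$, and Proposition \ref{qsim} together with the scalar equivalence $D=\mathrm{diag}(\overline{\lambda},\overline{\lambda},\overline{\lambda})$ absorbs the remaining ambiguity in $g_{23}$. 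Each approach buys something: the paper's reduction is shorter and reuses classical complex-hyperbolic results, but it silently relies on transporting Brehm's theorem across the embedding (in particular, the complex-hyperbolic congruence it produces may a priori be anti-holomorphic on $M$, and one must know it still extends to an element of ${\rm PU}(n,1;\Q)$); your argument is self-contained, needs neither Corollary \ref{3in} nor Brehm's theorem, and makes explicit the one genuinely quaternionic point, namely that the normalized Gram matrix determines $g_{23}$ only up to conjugation by a unit quaternion and that this residual freedom is exactly a $D^{*}GD$-equivalence. This is in fact a sharper account than the paper's blanket assertion that the normalized Gram matrix is unique, which fails verbatim over $\Q$. (A minor remark: with the paper's sign convention $\sigma^{*}=-{\rm Re}(\cdots)$ and $g_{ii}=-1$, your simplification $\sigma^{*}={\rm Re}\,H(v_1,v_2,v_3)$ is algebraically right; the sign plays no role in the congruence argument, which only needs that ${\rm Re}(g_{23})$ is determined by the given data.)
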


\begin{proof}  Let  $p=(p_1, p_2, p_3)$ be an ordered triple of distinct points in ${\rm H^{n}_{\Q}}$. By applying Corollary \ref{3in}, we may assume that
$p$ lies  in  $M = {\rm H^{2}_{\C}} \subset {\rm H^{n}_{\Q}}$. Then the result follows from Proposition \ref{grami} and results in \cite{Bre}.
\end{proof}

\medskip

In  \cite{BrEt}, Brehm and Et-Taoui introduced another invariant in complex hyperbolic geometry which they called the {\sl direct shape invariant}, or, {\sl $\tau$-invariant}.
Below, we recall the definition of this invariant.

\medskip

Let  $p=(p_1, p_2, p_3)$ be an ordered triple of distinct points in ${\rm H^{n}_{\C}}$ and $v=(v_1, v_2, v_3)$ be a lift of $p=(p_1, p_2, p_3)$.
Then the {\sl direct shape invariant} is defined to be

$$\tau = \tau (p)= \frac{H(v_1, v_2, v_3)}{|H(v_1, v_2, v_3)|}.$$

\medskip

It is easy to check that $\tau (p)$ is independent of the chosen lifts.  Also, it was proved  in \cite{BrEt} that two triangles ${\rm H^{n}_{\C}}$ are equivalent relative to the diagonal action of ${\rm PU}(n,1; \C)$ if and only if the three corresponding edge lengths and the direct shape invariant
$\tau$ of the two triangles coincide.

\medskip

\begin{rmk}
Note that the $\sigma$-shape invariant is symmetric, but for the $\tau$-shape invariant we have that  $\tau(p_2, p_1, p_3)=\overline{\tau(p_1, p_2, p_3)}$.
This implies that the $\sigma$-shape invariant (with the side lengths) describes triangles up to the full isometry group of complex hyperbolic space (which includes anti-holomorphic isometries), but $\tau$-shape invariant (with the side lengths) describes  triangles up to the group of holomorphic
isometries  ${\rm PU}(n,1; \C)$.
\end{rmk}

\medskip

Now we define an analogue of $\tau$-shape invariant in quaternionic hyperbolic geometry. We start with the following lemma whose proof is based on a direct
computation.

\medskip

\begin{lemma}\label{prodt} Let  $p=(p_1, p_2, p_3)$ be an ordered triple of distinct points in ${\rm H^{n}_{\Q}}$ and $v=(v_1, v_2, v_3)$ be a lift of $p=(p_1, p_2, p_3)$.
Let $w_i= v_i \lambda_i$, $\lambda_i \in \Q$, $\lambda_i \neq 0$, then

$$H(w_1, w_2, w_3) |H(w_1, w_2, w_3)|^{-1}=$$

$$\frac{\overline{\lambda}_1}{ \mod{\lambda_1}} H(v_1, v_2, v_3) |H(v_1, v_2, v_3)|^{-1} \frac{\lambda_1}{\mod{\lambda_1}}.$$
\end{lemma}

\medskip

It is easy to see that $H(w_1, w_2, w_3) |H(w_1, w_2, w_3)|^{-1}$ is similar to $ H(v_1, v_2, v_3) |H(v_1, v_2, v_3)|^{-1}$ for any
$\lambda_i \in \Q$, $\lambda_i \neq 0$. Moreover, this similarity class contains a complex number, unique up to conjugation, see Proposition \ref{qsim}.

Let $ \tau^{*} (p)$ denote a unique complex number with non-negative imaginary part in this similarity class.

\medskip

We define the {\sl quaternionic $\tau$-shape invariant} to be $\tau^{*} = \tau^{*} (p)$. It is clear that $ \tau^{*} (p)$ does not depend on the chosen lifts.

\medskip

\begin{prop} Let $p=(p_1, p_2, p_3)$ and $p^\prime=(p_1^\prime, p_2^\prime ,p_3^\prime)$ be two ordered triples of distinct points in
 ${\rm H^{n}_{\Q}}$. Then these two triangles  are equivalent relative to the diagonal action of ${\rm PU}(n,1; \Q)$ if and only if the three corresponding edge lengths and the quaternionic  $\tau$-shape  invariant $\tau^{*}$ of the two triangles coincide.
\end{prop}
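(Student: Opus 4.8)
The plan is to reduce everything to two-dimensional complex hyperbolic geometry, where the analogous statement for Brehm and Et-Taoui's $\tau$-invariant is already known. By Corollary \ref{3in}, any ordered triple of distinct negative points lies in a totally geodesic submanifold of complex type of complex dimension $2$; applying an element of ${\rm PU}(n,1;\Q)$ if necessary, I would assume both $p$ and $p^\prime$ lie in the canonical $M = {\rm H^{2}_{\C}} \subset {\rm H^{n}_{\Q}}$. The three edge lengths are recovered from $|\herm{v_i,v_j}|$ via the Bergman metric, so I need to show that, together with $\tau^{*}$, they form a complete set of invariants.

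For the forward direction, suppose $p$ and $p^\prime$ are ${\rm PU}(n,1;\Q)$-congruent. The edge lengths are isometry invariants, and $\tau^{*}$ was already observed to be independent of the chosen lifts and invariant under the diagonal action, so both data coincide. The substance is the converse. Given that the edge lengths and $\tau^{*}(p)=\tau^{*}(p^\prime)$ agree, I would normalize the lifts following the construction in the proof of Proposition \ref{compt}: choose lifts so that $H(v_1,v_2,v_3)$ is a complex number and all Hermitian products $\herm{v_i,v_j}$ lie in the canonical $\C=\C(i)$. With such a normalization $\tau^{*}$ is, up to complex conjugation, precisely the direct shape invariant $\tau=H/|H|$ of the resulting complex triangle; the sign of the imaginary part is fixed by our convention that $\tau^{*}$ has non-negative imaginary part. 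The edge lengths fix $\herm{v_1,v_1},\herm{v_2,v_2},\herm{v_3,v_3}$ (normalized to $-1$) and the three moduli $|\herm{v_i,v_j}|$, while $\tau^{*}$ together with these moduli determines the complex number $H(v_1,v_2,v_3)$ itself. Then by the Brehm--Et-Taoui theorem in \cite{BrEt} the two triangles in $M={\rm H^{2}_{\C}}$ are ${\rm PU}(2,1;\C)$-congruent, and the isometry extends to ${\rm PU}(n,1;\Q)$ by the Witt theorem (Corollary following Proposition 1.2).

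The technical core is matching the sign conventions: $\tau^{*}$ is defined as a similarity-class representative with non-negative imaginary part, whereas the classical $\tau$ distinguishes $\tau(p)$ from $\tau(p_2,p_1,p_3)=\overline{\tau(p)}$. Because a quaternionic similarity identifies a complex number with its conjugate (Proposition \ref{qsim}), passing to $\tau^{*}$ collapses this distinction, so a priori $\tau^{*}(p)=\tau^{*}(p^\prime)$ only guarantees that the complex triangles have $\tau$-invariants that are equal or complex-conjugate. I would resolve this exactly as in the isotropic case: equal edge lengths together with $\tau$-invariants that are complex conjugates still yield congruent triangles in ${\rm H^{n}_{\Q}}$, since by the argument of the Proposition preceding the final Corollary of Section \ref{ideal} there is an element of ${\rm PU}(n,1;\Q)$ realizing the transposition $(p_1,p_2,p_3)\mapsto(p_2,p_1,p_3)$, which conjugates $\tau$.

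**The main obstacle** is therefore not the reduction to dimension two—that is handed to us by Proposition \ref{compt} and Corollary \ref{3in}—but verifying that the non-commutativity of $\Q$ does not introduce a genuine loss of information beyond the conjugation ambiguity already present in the complex case. Concretely, one must check that the quaternionic similarity class of $H/|H|$ carries exactly the data $\{\tau,\bar\tau\}$ and nothing coarser, so that after accounting for the transposition symmetry the invariant $\tau^{*}$ (with the edge lengths) is genuinely complete; this is where Proposition \ref{qsim} and the reality of the three diagonal Hermitian products do the decisive work.
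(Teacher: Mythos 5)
Your overall strategy coincides with the paper's: reduce to the canonical $M={\rm H^{2}_{\C}}\subset{\rm H^{n}_{\Q}}$ via Corollary \ref{3in} and Proposition \ref{compt}, then invoke the known complex-hyperbolic congruence criterion (the paper cites Proposition \ref{grami} on Gram matrices where you cite \cite{BrEt}; these are interchangeable here). You are also right to isolate the real issue, namely that the quaternionic similarity class of $H/|H|$ only remembers the unordered pair $\{\tau,\bar\tau\}$, so that $\tau^{*}(p)=\tau^{*}(p^\prime)$ a priori only gives $\tau(p)=\tau(p^\prime)$ or $\tau(p)=\overline{\tau(p^\prime)}$ after normalizing into $M$.

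However, the specific mechanism you propose for absorbing the conjugation ambiguity does not work as stated. The element $f\in{\rm PU}(n,1;\Q)$ realizing the transposition sends the ordered triple $(p_1,p_2,p_3)$ to $(p_2,p_1,p_3)$, whose $\tau$-invariant is indeed $\overline{\tau(p)}$ but whose ordered edge-length data is $(d_{12},d_{23},d_{13})$ rather than $(d_{12},d_{13},d_{23})$; so this only settles the isosceles case $d_{13}=d_{23}$, and for a generic triangle it produces a triple with the wrong ordered side lengths. The correct device --- exactly the one used in the paper's proof of Proposition \ref{pos1} --- is the semilinear map $L_j\colon v\mapsto jvj^{-1}$ on $\Q^{n,1}$ (equivalently, the extension to ${\rm PU}(n,1;\Q)$ of an anti-holomorphic isometry of $M$): it satisfies $\herm{L_j(v_k),L_j(v_l)}=\overline{\herm{v_k,v_l}}$, hence conjugates the complex normalized Gram matrix entrywise, replacing $\tau$ by $\bar\tau$ while leaving every $|\herm{v_k,v_l}|$, and therefore every ordered edge length, unchanged. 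With that substitution your argument closes the gap and agrees in substance with the paper's (much terser) proof.
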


\begin{proof} By applying Corollary \ref{3in}, we may assume  that $p$ and $p^\prime$ are  in  $M = {\rm H^{2}_{\C}} \subset {\rm H^{n}_{\Q}}$
Then the result follows from Proposition \ref{grami}.
\end{proof}

\subsection{Moduli of triples  of positive points}\label{pos}

\noindent In this section, we describe the invariants associated to an ordered triple of positive points  in  $\P\Q^{n}$ which define the
equivalence class of the triple relative to the diagonal action of ${\rm PU}(n,1; \Q)$.

\medskip

First, we show how positive points in $\P\Q^{n}$ are related to totally geodesic submanifolds in  ${\rm H^{n}_{\Q}}$ isometric to  ${\rm H^{n-1}_{\Q}}$.
We call such submanifolds of ${\rm H^{n}_{\Q}}$ {\sl totally geodesic quaternionic  hyperplanes} in ${\rm H^{n}_{\Q}}$.

\medskip
We recall that $\pi$ denotes a natural projection
from $\Q^{n+1} \setminus \{0\}$ to the projective space $\P\Q^{n}$.

\medskip

If $H \subset \rm H^{n}_{\Q}$ is a totally geodesic quaternionic hyperplane, then $H =\pi(\tilde{H}) \cap \rm H^{n}_{\Q}$, where $\tilde{H} \subset \Q^{n,1}$ is a quaternionic linear
hyperplane. Let $\tilde{H}^\perp$ denote the orthogonal complement of $\tilde{H}$ in $\Q^{n,1}$ with respect to the Hermitian form $\Phi$. Then   $\tilde{H}^\perp$ is a positive quaternionic
line, and $\pi(\tilde{H}^\perp)$ is a positive point in $\P\Q^{n}$. Thus, the totally geodesic  quaternionic  hyperplanes in ${\rm H^{n}_{\Q}}$ bijectively correspond to positive points.
We call $p= \pi(\tilde{H}^\perp)$ the {\sl polar point} of  a totally geodesic quaternionic hyperplane  $H$. So, the invariants associated to an ordered triple of positive points  in  $\P\Q^{n}$
are invariants of an ordered triple of totally geodesic quaternionic hyperplane in ${\rm H^{n}_{\Q}}$.

\medskip

Let $H_1$ and $H_2$ be distinct totally geodesic quaternionic hyperplanes in $\rm H^{n}_{\Q}$ and $p_1$, $p_2$ be their polar points.
Let  $v_1$ and $v_2$ in $\Q^{n,1}$ be their lifts. Then we define

$$d(H_1,H_2)= d(p_1,p_2) = \frac{\herm{v_1,v_2}\herm{v_2,v_1}}{\herm{v_1,v_1}\herm{v_2,v_2}}.$$
It is easy to see that $d(H_1,H_2)$ is independent of the chosen
lifts of $p_1, p_2$, and that $d(H_1,H_2)$ is invariant with respect to
the diagonal action of ${\rm PU}(n,1; \Q)$.

\medskip
There is no accepted name for this invariant in the literature.
It is not difficult to show using standard arguments that the distance or the angle between $H_1$ and $H_2$ is given in
terms of $d(H_1,H_2)$, (see, for instance, the Goldman \cite{Gol} for the case of complex hyperbolic geometry). So, we will call this invariant $d(H_1,H_2)$ the
{\sl distance-angular invariant} or, simply, {\sl d-invariant}.

\medskip

Also, it is easy to see that:

\begin{itemize}\item $H_1$ and $H_2$ are concurrent if
and only if $d(H_1,H_2)<1$,
\item $H_1$ and $H_2$ are asymptotic if and
only if $d(H_1,H_2)=1$,
\item $H_1$ and $H_2$ are ultra-parallel if and
only if $d(H_1,H_2)>1$.
\end{itemize}

Moreover,  $d(H_1,H_2)$ is the only invariant of an ordered pair of totally geodesic quaternionic hyperplanes in $\rm H^{n}_{\Q}$. We have also that the angle $\theta$ between
$H_1$ and $H_2$ (in the case $d(H_1,H_2)<1$) is given by

$$\cos^{2}(\theta)= d(H_1,H_2),$$
and the distance $\rho$ between $H_1$ and $H_2$ (in the case $d(H_1,H_2)\geq 1$) is given by
$$\cosh^2(\rho)= d(H_1,H_2)$$.

We remark that $0 <\theta \leq \pi/2$.

\medskip

We say that $H_1$ and $H_2$ are {\sl
orthogonal} if $\theta = \pi/2$, this is equivalent to the equality
$d(H_1,H_2)=0.$

\medskip

Now let $(H_1, H_2, H_3)$ be an ordered triple of distinct  totally geodesic quaternionic hyperplanes in $\rm H^{n}_{\Q}$.
Let $p_1$, $p_2$, $p_3$ be the polar points of $H_1, H_2, H_3$ and $v_1$, $v_2$,  $v_3$ be their lifts in $\Q^{n,1}$.
Then, by applying Proposition \ref{compt}, we can assume that $p_1$, $p_2$, $p_3$ lie in a projective submanifold $W \subset \P\Q^{n}$ of complex type of complex dimension $2$ passing through the points $p_i$.
Moreover, this submanifold $W$ can be chosen, up to the action of ${\rm PU}(n,1; \Q)$, to be the canonical complex submanifold $ \P\C^{2} \subset \P\Q^{n}$. Therefore, we can assume without loss of
generality that the coordinates of the vectors  $v_1$, $v_2$,  $v_3$ are complex numbers.

\medskip

Let $G=\ (g_{ij}) = (\herm{v_i,v_j})$ be the Gram matrix associated to the points $p_1$, $p_2$, $p_3$ defined by the chosen vectors  $v_1$, $v_2$,  $v_3$ as above.
Then it follows from Proposition \ref{gram2} and the proof of Proposition \ref{compt} that $g_{ii}=1$,  $g_{1j}=r_{1j}\geq 0$, and $g_{23}=r_{23}e^{\alpha}$.
We call such a matrix $G$ a {\sl complex normal form} of the associated Gram matrix. Also, we call $G$ the {\sl complex normalized} Gram matrix.

\medskip

Next we construct the moduli space of ordered triples of distinct  totally geodesic quaternionic hyperplanes in $\rm H^{n}_{\Q}$.
We consider only the regular case, that is, when for all triples in question the spaces spanned by lifts of their polar points are regular, see Corollary \ref{gramii}.
It is easy to see that in non-regular case, the totally geodesic quaternionic hyperplanes $H_1, H_2, H_3$ are all asymptotic, that is, $d(H_i,H_j)=1$, $i\neq j$.
It was shown in \cite{CDGT} that a similar problem, the congruence  problem for triples of complex geodesic in complex hyperbolic plane, cannot be solved by using
Hermitian invariants.

\medskip

An ordered triple $H=(H_1, H_2, H_3)$ of totally geodesic quaternionic hyperplanes in $\rm H^{n}_{\Q}$  is said to be {\sl generic} if
$H_i$ and $H_j$ are not orthogonal for all $i,j=1,2,3.$ It is clear that $H$
is generic if and only if the corresponding triple of polar points is generic.

\medskip

We start with the following proposition:

\begin{prop}\label{pos1} Let $H=(H_1, H_2, H_3)$ and  $H^\prime = (H^\prime_1, H^\prime_2, H^\prime_3)$ be two ordered generic triples of distinct  totally geodesic quaternionic hyperplanes in $\rm H^{n}_{\Q}$.
Let $p=(p_1, p_2, p_3)$ and  $p^\prime = (p^\prime_1,p^\prime_2, p^\prime_3)$ be their polar points. Let  $v=(v_1,v_2, v_3)$ and $v^\prime = (v^\prime_1,v^\prime_2,v^\prime_3)$ be their lifts in $\Q^{n,1}$
such that the Gram matrices $G=(g_{ij})= (\herm{v_i,v_j})$ and $G^\prime=(g^\prime_{ij})= (\herm{v^\prime_i,v^\prime_j})$ are complex normalized. Suppose that the spaces spanned by
$v_1$, $v_2$,  $v_3$ and $v^\prime_1$, $v^\prime_2$,  $v^\prime_3$ are regular. Then $H$ and $H^\prime$ are equivalent relative to
the diagonal action of ${\rm PU}(n,1; \Q)$ if and only if $G=G^\prime$ or $\bar{G}=G^\prime$.
\end{prop}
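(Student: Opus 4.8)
The plan is to reduce the problem to the complex-hyperbolic setting established in Proposition \ref{compt}, and then invoke the Gram-matrix congruence criterion (Corollary \ref{gramii}) together with the structure of the isometry group of $\rm H^{n}_{\Q}$. Since both triples of polar points are generic and the spanning spaces are regular, I would first apply Proposition \ref{compt} to place each triple inside a canonical complex submanifold $\P\C^2 \subset \P\Q^n$, so that the lifts $v_i$ and $v^\prime_i$ have complex coordinates and the associated Gram matrices $G$ and $G^\prime$ are in complex normal form: $g_{ii}=1$, $g_{1j}=r_{1j}\geq 0$, and $g_{23}=r_{23}e^{\alpha}$ with $\alpha \in \C(i)$.

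For the backward direction, suppose first that $G=G^\prime$. Then by Corollary \ref{gramii} (the positive generic regular case) the triples $p$ and $p^\prime$ are congruent under ${\rm PU}(n,1; \Q)$, hence so are the hyperplanes $H$ and $H^\prime$ via the polar-point correspondence. If instead $\bar{G}=G^\prime$, the idea is that complex conjugation of the Gram matrix corresponds to applying an anti-holomorphic isometry of the ambient complex submanifold $\rm H^2_{\C}$. Concretely, the map $\sigma$ induced by coordinate-wise conjugation in $\C^{n+1}$ sends a configuration with Gram matrix $G$ to one with Gram matrix $\bar G$, because $\overline{\herm{v_i,v_j}}=\herm{\bar v_i,\bar v_j}$. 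The key point, analogous to the argument in the Proposition before this one (the symmetric existence of an element swapping $p_1,p_2$), is that this anti-holomorphic map, while induced by a semilinear map of $\C^{n+1}$, extends to an honest element of ${\rm PU}(n,1; \Q)$: one realizes $\sigma$ as the restriction of a geodesic reflection $L_a$ (for suitable purely imaginary $a$ with $|a|=1$) whose fixed-point set is the canonical complex submanifold, using Proposition \ref{fixp} and the fact that $L_a$ is induced by the \emph{linear} map $v\mapsto av$. Thus $\bar G = G^\prime$ again yields congruence under ${\rm PU}(n,1; \Q)$.

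For the forward direction, if $H$ and $H^\prime$ are congruent under some $g\in {\rm PU}(n,1; \Q)$, then $g$ carries the polar points $p_i$ to $p^\prime_i$ and carries the complex submanifold $W$ through $p$ to a complex submanifold through $p^\prime$. Since all such submanifolds of complex type of maximal dimension are globally equivalent (Section \ref{tgsm}), one may assume $g$ preserves the canonical $\P\C^2$. The restriction of $g$ to this submanifold is then an isometry of $\rm H^2_{\C}$, either holomorphic or anti-holomorphic. In the holomorphic case the Gram matrices (being complex normalized, hence unique) must coincide, giving $G=G^\prime$; in the anti-holomorphic case the effect on the Hermitian products is complex conjugation, giving $\bar G = G^\prime$.

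The main obstacle I anticipate is the anti-holomorphic extension step: unlike the holomorphic case, the conjugation map on the complex submanifold is induced only by a semilinear map of the complex subspace, so the Witt theorem does not apply directly. The crucial observation that resolves this is precisely the phenomenon highlighted in Section \ref{lmaig}—that a geodesic reflection $L_a$ in $\rm H^n_{\Q}$, though it acts anti-holomorphically on a complex submanifold, is nonetheless induced by a genuinely \emph{linear} quaternionic map and therefore lies in ${\rm PU}(n,1; \Q)$. Making this identification precise, and checking that the restriction of the chosen $L_a$ to the canonical $\rm H^2_{\C}$ realizes exactly the conjugation sending $G$ to $\bar G$, is the technical heart of the argument; the rest reduces to the positive-point Gram-matrix criterion already in hand.
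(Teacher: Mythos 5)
Your overall strategy is the paper's: put both triples in complex normal form inside a canonical $\P\C^{2}$ via Proposition \ref{compt}, settle the case $G=G^\prime$ by Corollary \ref{gramii}, and settle the case $\bar{G}=G^\prime$ by showing that coordinate-wise complex conjugation on the ambient complex submanifold is realized by a genuinely quaternionic-linear map and hence by an element of ${\rm PU}(n,1;\Q)$. (The paper, for what it is worth, only writes out this backward implication; your sketch of the forward direction is a reasonable supplement.)

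There is, however, a concrete error in the step you yourself call the technical heart. You propose to realize the conjugation $\sigma$ as the restriction of a geodesic reflection $L_a$ \emph{whose fixed-point set is the canonical complex submanifold}. That cannot work: a map fixing the canonical complex submanifold pointwise restricts to the \emph{identity} there, not to conjugation. By Propositions \ref{qua3} and \ref{fixp}, requiring the fixed-point set of $L_a$ to be the submanifold over $\C(i)$ forces $a=\pm i$, and for vectors $v,w$ with coordinates in $\C(i)$ one has $\herm{iv,iw}=\herm{v,w}$, so $L_{\pm i}$ leaves $G$ unchanged rather than conjugating it. The correct choice is a purely imaginary unit $a$ that \emph{anticommutes} with $i$, e.g.\ $a=j$: since $jzj^{-1}=\bar z$ for $z\in\C(i)$, the projective map induced by the linear map $v\mapsto jv$ (equivalently by the semilinear map $v\mapsto jvj^{-1}$, which is exactly what the paper uses and which satisfies $\herm{jv_kj^{-1},jv_lj^{-1}}=\overline{\herm{v_k,v_l}}$) restricts to coordinate-wise conjugation on $\P\C^{n}$ and carries the Gram matrix $G$ to $\bar{G}$. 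Its fixed-point set is the complex-type submanifold over $\C(j)$, which meets the canonical one only along the real points --- consistent with $\sigma$ being the reflection in the real form of ${\rm H^{2}_{\C}}$. With this correction your argument coincides with the paper's proof.
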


\begin{proof} If $G=G^\prime$, then it  follows from Corollary \ref{gramii} that there exists a linear isometry $L : \Q^{n,1} \longrightarrow \Q^{n,1}$ such that $L(v_i)=v^\prime_i$.
Let us suppose that $\bar{G}=G^\prime$. Then we have that $g_{1j}=g^\prime_{1j}$ because $g_{1j}$ and $g^\prime_{1j}$ are real. Also, if $g_{23}= r_{23}e^{\alpha}$,
then $g^\prime_{23}= r_{23}e^{-\alpha}$.
Let us consider the semi-linear map $L_j : \Q^{n,1} \longrightarrow \Q^{n,1}$ defined by the rule $L_j (v)=jvj^{-1}.$
Then we have that
$$\herm{L_j(v_k),L_j(v_l)} = \herm{jv_kj^{-1},jv_lj^{-1}} =  \overline{j^{-1}}\herm{jv_k,jv_l}j^{-1} $$

$$= -\bar{j}\herm{v_k,v_l}j^{-1}=j\herm{v_k,v_l}j^{-1}=\overline{\herm{v_k,v_l}}.$$

Here we have used that $jzj^{-1}=\bar{z}$ for any complex number $z$ and that $j^{-1}=-j$.

\medskip

It follows that the Gram matrix of the vectors $L_j(v_k),$ $k=1,2,3,$ is equal to $\bar{G}$. Therefore, using the first part of the proof, we get that the triples $v$ and $v^\prime$
are equivalent relative to the diagonal action of ${\rm U}(n,1; \Q)$. This implies that $H$ and $H^\prime$ are equivalent relative to
the diagonal action of ${\rm PU}(n,1; \Q).$
\end{proof}

\medskip

Let $p=(p_1,p_2,p_3)$  be an ordered generic triple of distinct positive points in  $\P\Q^{n}$. Let $v=(v_1,v_2, v_3)$ be their lifts in $\Q^{n,1}.$
Let $H(v_1,v_2,v_3)=(\herm{v_1,v_2}\herm{v_2,v_3}\herm{v_3,v_1}) \in \Q.$

\medskip

Proposition \ref{pos1} justifies the following definition.

\medskip

We define the {\sl angular invariant} $A=A(p)$ of an ordered generic triple of distinct positive points $p=(p_1,p_2,p_3)$ in $\P\Q^{n}$
to be the argument of the unique complex number $b=b_0 +b_1$ with $b_1 \geq 0$ in the similarity class of

$$\tau( v_1,v_2,v_3) = H(v_1,v_2,v_3)|H(v_1,v_2,v_3)|^{-1}.$$

\medskip

It follows from Lemma \ref{prodt} and Corollary \ref{qua1} that $A=A(p)$ is defined uniquely by the real part of $\tau( v_1,v_2,v_3)$ which does not depend on the chosen lifts $v_1,v_2,v_3.$

\medskip
It is clear from the construction that $A=A(p)$ is invariant with respect to the diagonal action of ${\rm PU}(n,1; \Q)$. Also, $0 \leq A(p) \leq \pi.$

\medskip

By applying the proof of Proposition \ref{compt}, we can chose lifts $v=(v_1,v_2, v_3)$ of  $p=(p_1,p_2,p_3)$ such that the Gram matrix associated to $p=(p_1,p_2,p_3)$ defined by $v=(v_1,v_2, v_3)$
is the  complex normalized Gram matrix of $p=(p_1,p_2,p_3)$, that is, $g_{ii}=1$,  $g_{1j}=r_{1j}> 0$, and $g_{23}=r_{23}e^{\alpha}$, $r_{23}>$.
It is clear that for these lifts $A(p)= \alpha$.

\medskip

Now we are ready to describe the moduli space of ordered triples of distinct regular generic totally geodesic quaternionic hyperplanes in $\rm H^{n}_{\Q}$.

\begin{theorem} \label{pos2}Let $H=(H_1, H_2, H_3)$ and  $H^\prime = (H^\prime_1, H^\prime_2, H^\prime_3)$ be two ordered  distinct regular generic  totally geodesic quaternionic hyperplanes in $\rm H^{n}_{\Q}$.
Then  $H=(H_1, H_2, H_3)$ and  $H^\prime = (H^\prime_1, H^\prime_2, H^\prime_3)$ are equivalent relative to the diagonal action of  ${\rm PU}(n,1; \Q)$ if and only if
$d(H_i,H_j)=  d(H^\prime_i,H^\prime_j)$ for all $i<j$, $i,j=1,2,3,$ and $A(p)=A(p^\prime)$, where $p=(p_1,p_2,p_3)$ and $p=(p^\prime_1, p^\prime_2,p^\prime_3)$ are the polar points of
$H)$ and  $H^\prime$.
\end{theorem}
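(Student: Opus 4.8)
The plan is to reduce the problem, as in all the preceding results, to complex hyperbolic geometry via the complex normal form of the Gram matrix, and then to match invariants. The statement asserts that two generic regular triples of quaternionic hyperplanes are congruent under $\pu{n;\Q}$ if and only if the three $d$-invariants agree and the angular invariants $A(p)$, $A(p^\prime)$ of the polar points agree. One direction is immediate: since $d(H_i,H_j)$ and $A(p)$ are both invariant under the diagonal action of $\pu{n;\Q}$ (as recorded when each was defined), congruence forces equality of all these numbers.

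For the converse I would work with the complex normalized Gram matrices $G=(g_{ij})$ and $G^\prime=(g^\prime_{ij})$ of the polar points $p$ and $p^\prime$, which exist by Proposition \ref{gram2} together with Proposition \ref{compt}. Each has $g_{ii}=1$, $g_{1j}=r_{1j}>0$ real, and $g_{23}=r_{23}e^{\alpha}$ with $r_{23}>0$; similarly for $G^\prime$ with $r^\prime_{1j}$, $r^\prime_{23}$, $\alpha^\prime$. The first task is to show the hypotheses pin down all entries up to the ambiguity allowed by Proposition \ref{pos1}. The diagonal entries are fixed at $1$. The $d$-invariant $d(H_i,H_j)=\herm{v_i,v_j}\herm{v_j,v_i}(\herm{v_i,v_i}\herm{v_j,v_j})^{-1}$ equals $|g_{ij}|^2$ for the normalized lifts, so the equalities $d(H_i,H_j)=d(H^\prime_i,H^\prime_j)$ give $r_{1j}=r^\prime_{1j}$ for $j=2,3$ (as these are positive reals) and $r_{23}=r^\prime_{23}$. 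Thus $G$ and $G^\prime$ can differ only in the argument $\alpha$ of the single off-diagonal entry $g_{23}$.

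Next I would identify $A(p)$ with that argument: by the final remark preceding the theorem, for the complex normalized lifts one has $A(p)=\alpha$ and $A(p^\prime)=\alpha^\prime$. Here one must check the sign convention, since $A$ was defined as the argument of the complex number with nonnegative imaginary part in the similarity class of $\tau(v_1,v_2,v_3)$, so $A(p)\in[0,\pi]$; the relevant point is that $A$ determines $\alpha$ up to replacing $\alpha$ by $-\alpha$, i.e. it determines $G$ up to complex conjugation $G\mapsto\bar G$. Consequently $A(p)=A(p^\prime)$ together with the equal moduli forces either $G=G^\prime$ or $\bar G=G^\prime$. Proposition \ref{pos1} then yields exactly that $H$ and $H^\prime$ are equivalent under the diagonal action of $\pu{n;\Q}$, completing the proof.

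The main obstacle I anticipate is the careful bookkeeping of the relation between the angular invariant $A(p)$ and the entry $g_{23}=r_{23}e^{\alpha}$, in particular making sure that the passage to the similarity class (where a quaternion is replaced by the complex number in its class with nonnegative imaginary part) collapses precisely the two cases $G$ and $\bar G$ and nothing more. One should verify that $\tau(v_1,v_2,v_3)$ for the complex normalized lift is, up to a positive real factor, the complex number $e^{\alpha}$ (or its conjugate), so that its real part, and hence $A(p)$, recovers $\cos\alpha$; this is the content that makes the $A$-invariant the right object to feed into Proposition \ref{pos1}. Once that identification is secure, the argument is a direct appeal to Proposition \ref{pos1} and the $d$-invariant computation, and no further geometric input is needed.
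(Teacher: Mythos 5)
Your proposal is correct and follows essentially the same route as the paper: both pass to the complex normalized Gram matrices of the polar points, recover $|g_{ij}|=\sqrt{d_{ij}}$ from the $d$-invariants and the argument of $g_{23}$ from $A(p)$, and conclude by the Gram-matrix congruence criterion. The only difference is cosmetic: the paper asserts that the entries are recovered uniquely and cites Corollary \ref{gramii}, whereas you track the residual ambiguity $G\mapsto\bar G$ coming from the sign convention in the definition of $A$ and close it with Proposition \ref{pos1}, which is the slightly more careful way to finish.
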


\begin{proof} We can choose lifts $v=(v_1,v_2, v_3)$ and $v^\prime = (v^\prime_1,v^\prime_2,v^\prime_3)$ of
$p=(p_1,p_2,p_3)$ and $p=(p^\prime_1, p^\prime_2,p^\prime_3)$ such that the Gram matrices $G$ and $G^\prime$ associated to $p$ and $p\prime$ defined by $v$ and $v^\prime$
are complex normalized. Then it follows from the definition of the Gram matrix that $d_{ij}= d(p_i,p_j)=g_{ij}g_{ji}=|g_{ij}|^2,$
and that

$$A(p)= A(p_1,p_2,p_3)=\arg(g_{12} \ g_{23} \ g_{31})=\arg (r_{12} \ g_{23j} \ r_{31}).$$

The first equality implies that $|g_{ij}|=\sqrt {d_{ij}}$. Since $H$
is generic, we have that $r_{1j} > 0$ for all $j>1$, and that
$g_{23}\neq 0.$ Therefore, the second equality implies that
$A(p)= \arg ( g_{23})$. Thus, all the entries of the complex normalized
Gram matrix $G(p)$ of $p$ are recovered uniquely in terms of the
invariants $d_{ij}$ and $A(p)$ above. Now the proposition follows
from Corollary \ref{gramii}.
\end{proof}

\medskip

Next, as a corollary of Theorem \ref{pos2}, we give an explicit
description of the moduli space of regular generic triples of totally geodesic quaternionic hyperplanes in $\rm H^{n}_{\Q}$.
First of all, it follows from Theorem \ref{pos2} that ${\rm PU}(n,1; \Q)$-congruence class of an
ordered regular generic triple $H$ of distinct generic  totally geodesic quaternionic hyperplanes in $\rm H^{n}_{\Q}$ complex is described uniquely by three
d-invariants $d_{12},d_{13},d_{23}$ and the angular invariant
$\alpha= \A(p_1, p_2 , p_3)$. Now, let $G=(g_{ij})$ be the complex
normalized Gram matrix of $H$. Then $g_{ii}=1, \  g_{1j}=r_{1j}>0$,
and $\arg (g_{23})=A(p_1, p_2 , p_3)$. We have that $d_{1j}=r_{1j}^2$ and
$d_{23}=r_{23}^2 $. Also,

$$g_{23}=r_{23} \ e^{i\alpha}=r_{23}(\cos\alpha + i\sin\alpha).$$

A straightforward computation shows that

$$\det G = 1-(r_{12}^2 + r_{13}^2 +r_{23}^2)+ 2r_{12} r_{13}r_{23}\cos\alpha.$$

Using the lexicographic order, we define $r_1=\sqrt d_{12}, \ r_2=\sqrt d_{13}, \ r_3=\sqrt d_{23}.$

\medskip

It follows from the Silvester criterium that $\det G \leq 0$, therefore, we have

\medskip

\begin{co} The moduli space ${\mathcal M}_{0}(3)$  of regular generic  totally geodesic quaternionic hyperplanes in $\rm H^{n}_{\Q}$
is homeomorphic to the set
$${\mathbb{M}}_{0}(3)=\{(r_1,r_2,r_3,\alpha) \in \R^4 :r_i>0,
\alpha \in (o,\pi], \ 1-(r_{1}^2 + r_{1}^2 +r_{3}^2)+ 2r_{1}
r_{2} r_{3}\cos\alpha \leq0\}.$$

\end{co}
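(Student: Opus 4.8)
The plan is to produce an explicit bijection $\Phi$ between ${\mathcal M}_{0}(3)$ and ${\mathbb{M}}_{0}(3)$ and then to check that it and its inverse are continuous. First I would let $\Phi$ send the ${\rm PU}(n,1; \Q)$-congruence class of a regular generic triple $H=(H_1,H_2,H_3)$, with polar points $p=(p_1,p_2,p_3)$, to the quadruple $(r_1,r_2,r_3,\alpha)=(\sqrt{d_{12}},\sqrt{d_{13}},\sqrt{d_{23}},A(p))$, where $d_{ij}=d(H_i,H_j)$. Since the $d$-invariants and the angular invariant are ${\rm PU}(n,1; \Q)$-invariant, $\Phi$ is well defined on congruence classes, and Theorem \ref{pos2} says precisely that two triples with the same quadruple are congruent; hence $\Phi$ is injective. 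Thus the whole statement reduces to identifying the image of $\Phi$ with ${\mathbb{M}}_{0}(3)$ and to a topological verification.

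To see that the image lies in ${\mathbb{M}}_{0}(3)$ I would pass to the complex normalized Gram matrix $G=(g_{ij})$ supplied by Proposition \ref{gram2} and the proof of Proposition \ref{compt}, for which $g_{ii}=1$, $g_{1j}=r_{1j}>0$, and $g_{23}=r_{23}e^{i\alpha}$. Genericity forces $r_i>0$, and the definition of $A(p)$ places $\alpha$ in $(0,\pi]$. The identity
$$\det G = 1-(r_{1}^2 + r_{2}^2 +r_{3}^2)+ 2r_{1} r_{2} r_{3}\cos\alpha$$
is a direct expansion, and the inequality $\det G\leq 0$ is exactly the consequence of Sylvester's criterion recorded just before the statement. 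Together these give $(r_1,r_2,r_3,\alpha)\in{\mathbb{M}}_{0}(3)$.

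For surjectivity I would reverse the construction. Given $(r_1,r_2,r_3,\alpha)\in{\mathbb{M}}_{0}(3)$, I form the Hermitian matrix $G$ with $g_{ii}=1$, $g_{12}=r_1$, $g_{13}=r_2$, and $g_{23}=r_3e^{i\alpha}$. Because $\det G\leq 0$ the matrix $G$ is not positive definite, so it cannot have signature $(3,0)$; and because its diagonal entries are $1>0$ it cannot be negative definite, which rules out $(0,3)$. Hence in the regular range $\det G<0$ its signature is $(2,1)$, and since for $n\geq 2$ the subspace $\C^{n,1}\subset\Q^{n,1}$ contains a subspace of signature $(2,1)$, standard linear algebra produces complex vectors $v_1,v_2,v_3$ realizing $G$ as their Gram matrix. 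Their projectivizations are positive points, hence polar points of totally geodesic quaternionic hyperplanes $H_1,H_2,H_3$; as $r_i>0$ the triple is generic and as $\det G\neq 0$ it is regular, and by construction $\Phi$ sends its class to $(r_1,r_2,r_3,\alpha)$. The degenerate face $\det G=0$ is attained as the boundary limit of this regular stratum.

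Finally I would check that $\Phi$ is a homeomorphism. The invariants $r_i$ and $\alpha$ are continuous functions of the Gram matrix entries, hence of the configuration, so $\Phi$ is continuous; conversely the matrix $G$, and with it the congruence class provided by Corollary \ref{gramii}, depends continuously on $(r_1,r_2,r_3,\alpha)$, giving a continuous inverse. The step I expect to be the main obstacle is the signature bookkeeping underlying surjectivity: one must show that the single scalar condition $\det G\leq 0$, rather than a finer system of conditions on the principal minors, is \emph{exactly} the realizability constraint, i.e. that positivity of the diagonal together with $\det G\le 0$ pins the signature to $(2,1)$ and excludes the non-embeddable signatures $(1,2)$ and $(0,3)$. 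Treating the boundary $\det G=0$ and reconciling it with the regularity hypothesis is the delicate point to be stated with care.
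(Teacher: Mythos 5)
Your proposal follows essentially the same route as the paper: the paper obtains the corollary directly from Theorem \ref{pos2} by observing that the complex normalized Gram matrix is recovered from $(r_1,r_2,r_3,\alpha)$, computing $\det G = 1-(r_{1}^2+r_{2}^2+r_{3}^2)+2r_1r_2r_3\cos\alpha$, and invoking Sylvester's criterion to get $\det G\leq 0$; your injectivity and ``image contained in ${\mathbb M}_0(3)$'' steps are exactly this. Your added surjectivity discussion is also correct in the open stratum, and the signature worry you flag as ``the main obstacle'' is in fact immediate: $\det G<0$ forces signature $(2,1)$ or $(0,3)$, and the positive diagonal kills $(0,3)$, while $(1,2)$ and $(3,0)$ have positive determinant.

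The one genuine gap is your treatment of the face $\det G=0$. Saying it ``is attained as the boundary limit of the regular stratum'' does not produce a configuration realizing those parameters: a limit of regular triples need not be a regular triple, and indeed if the three lifts were linearly independent with $\det G=0$ their span would be degenerate, contradicting the regularity hypothesis in the corollary. The correct realization is different in kind: for such parameters $G$ has rank exactly $2$ (rank $1$ would force $r_i=1$ and $\alpha=0\notin(0,\pi]$, hence coincident points), so one realizes $G$ by three pairwise non-proportional positive vectors lying in a common $2$-dimensional subspace of signature $(2,0)$ or $(1,1)$; that $2$-plane is non-degenerate, so the triple is regular even though its Gram matrix is singular. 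This is the case that must be argued separately rather than by a limiting process.
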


\begin{rmk}  If the triple $H=(H_1, H_2, H_3)$ is not generic, we need less invariants to describe the equivalence class of $H$. For instance, if
$H_2$ and $H_3$ are both orthogonal to $H_1$, we need only $d(H_2,H_3).$
\end{rmk}


\subsection{Moduli of triples  of points: mixed configurations}

\noindent As we know from  Section \ref{neg},  that, up to isometries of ${\rm H^{n}_{\Q}}$, triples of points of  ${\rm H^{n}_{\Q}}$,
that is, triangles, are characterized by the side lengths and the Brehm shape invariant. Also, triples of points in $\partial {\rm H^{n}_{\Q}}$, that is, ideal triangles,
are characterized by the Cartan angular invariant, see Section \ref{ideal}. The description of the moduli of triples of positive points was given in Section \ref{pos}.

\medskip

In this section, we describe the invariants for mixtures of the three types of points in $\P\Q^{n}$ relative to the diagonal action of  ${\rm PU}(n,1; \Q)$.
Using this, we describe the  moduli of the corresponding configuration.

\medskip

First, we give a list of all the  triples of points in  $\P\Q^{n}$.

\medskip

Let $p=(p_1, p_2, p_3)$ be a triple of points in $\P\Q^{n}$. Then we have the following possible configurations (up to permutation).

\begin{enumerate}

\item Ideal triangles: $p_i$ is isotropic for $i=1, 2, 3.$

\item Triangles: $p_i$ is negative, that is, $p_i$ in ${\rm H^{n}_{\Q}}$ for $i=1, 2, 3.$

\item Triangles of totally geodesic quaternionic hyperplanes in $\rm H^{n}_{\Q}$.

\item Triangles with two ideal vertices: $p_1$ and $p_2$ are isotropic, and (a) $p_3$ is negative, (b) $p_3$ is positive.

\item Triangles with one ideal vertex: $p_1$ is isotropic, and (a) $p_2$, $p_3$ are negative, (b) $p_2$, $p_3$ are positive,
(c) $p_2$ is negative, $p_3$ is positive.

\item Triangles with one negative vertex and two positive vertices: $p_1$ is negative, and $p_2$, $p_3$ are positive.

\item Triangles with one positive vertex and two negative vertices: $p_1$ is positive, and $p_2$, $p_3$ are negative.

\end{enumerate}

\medskip

The moduli of triangles in the first three cases have been already established in the previous sections  Below, we describe invariants of mixed configurations 4-7.

\medskip

First, we recall the definition of so called  $\eta$-invariant \cite{Gol} in complex hyperbolic geometry.

\medskip

Let $(p_1, p_2, q)$ be an ordered  triple of points in $\P\C^{n}$. We suppose that $p_1$ and $p_2$ are isotropic and $q$ is positive.
Let $(v_1, v_2, w)$ be a lift of $(p_1, p_2, p_3)$. Then it is easy to check that the complex number
$$\eta(v_1,v_2, w)= \frac{\herm{v_1,w}\herm{w,v_2} }{ \herm{v_1,v_2} \herm{w,w}}$$
is independent of the chosen lifts and will be denoted by $\eta(p_1,p_2, q)$.

\medskip

We call the number $\eta(p_1,p_2, q)$ the Goldman {\sl $\eta$-invariant}. This invariant was introduced by Goldman \cite{Gol} to study the intersections of bisectors
in complex hyperbolic space. Later, Hakim and Sandler \cite{HS1} generalized the  Goldman construction for more general triples of points.

\medskip

The aim of this section is to define analogous invariants in quaternionic hyperbolic geometry and to prove the congruence theorems for all triples in question.

\subsubsection{ Triangles with two ideal vertices}

Let $p=(p_1, p_2, p_3)$ be a triple of points in $\P\Q^{n}$. In what follows, we
will denote by $v$ the triple $v=(v_1,v_2, v_3$, where $v_i$ is a lift of $p_i$ in $\Q^{n,1}$.

First we consider the case when  $p_1$ and $p_2$ are isotropic and $p_3$ is positive. This configuration was considered by Goldman \cite{Gol} in complex hyperbolic geometry. Geometrically it can be considered as a configuration of two points in the boundary of quaternionic hyperbolic space ${\rm H^{n}_{\Q}}$ and a totally geodesic quaternionic hyperplane in $\rm H^{n}_{\Q}$.

\medskip

Let $v_1$, $v_2$ be isotropic vectors representing $p_1, p_2$ and $v_3$ a positive vector representing $p_3$.

Consider the following quaternion:

$$\eta(v_1,v_2, v_3)= \herm{v_1,v_3}\herm{v_3,v_2}\herm{v_1,v_2}^{-1}{ \herm{v_3,v_3}}^{-1}.$$

\medskip

Now let us take another lifts of $p=(p_1, p_2, p_3)$: $v^\prime_1 = v_1 \lambda_1$, $v^\prime_2 = v_2 \lambda_2$, $v^\prime_3 = v_3 \lambda_3$.

\medskip

It is easy to check that

$$\eta(v_1\lambda_1, v_2\lambda_2, v_3\lambda_3)= \frac{\bar{\lambda}_1}{|\lambda_1|}\eta(v_1,v_2, w)\frac{\lambda_1}{|\lambda_1|}.$$

\medskip

Therefore, this implies that $\eta(v_1,v_2, v_3)$ is independent of the choices of lifts of $p_2$ and $p_3$, and if we change a lift of $p_1$, we get a similar quaternion.

\medskip

By applying Proposition \ref{compt}, we can assume that $p_1$, $p_2$, $p_3$ lie in a projective submanifold $W \subset \P\Q^{n}$ of complex type of complex dimension $2$ passing through the points $p_i$.
Moreover, this submanifold $W$ can be chosen, up to the action of ${\rm PU}(n,1; \Q)$, to be the canonical complex submanifold $ \P\C^{2} \subset \P\Q^{n}$. Therefore, we can assume without loss of
generality that the coordinates of the vectors  $v_1$, $v_2$,  $v_3$ are complex numbers.

\medskip
Let $G=\ (g_{ij}) = (\herm{v_i,v_j})$ be the Gram matrix associated to the points $p_1$, $p_2$, $p_3$. Then $g_{11}=0$, $g_{22}=0$.
Since $g_{33}= \herm{v_3,v_3}>0$, replacing $v_3$ by $v_3 a$, where $a= 1/\sqrt{g_{33}}$, we may assume that $g_{33}=1$.
Replacing $v_2$ by $v_2 b$, where $b=1/\herm{v_1,v_2}$, we may assume that $g_{12}=1.$ We keep the same notations for the modified vectors.
After that, replacing $v_1$ by $v_1 b$, where $b=  1/\herm{v_3,v_1}$ and $v_2$ by $v_2 c$, where $c= \herm{v_1,v_3}$, we get $g_{12}=g_{13}=1$.
So, after this re-scaling we get that $G=(g_{ij}$ has the following entrances:
$g_{11}=g_{22}=0, g_{12}=g_{13}=1, g_{33}=1,$ $g_{23}$ is an arbitrary complex number.

\medskip
We call such a matrix $G$ a {\sl complex normal form} of Gram matrix associated to $p=(p_1, p_2, p_3)$. Also, we call $G$ the {\sl complex normalized} Gram matrix.

\medskip

It is clear that any vectors $v_1$, $v_2$ and $v_3$ which represent  $p_1$, $p_2$, and $p_3$ generate a regular space in $\Q^{n,1}.$
Therefore, repeating almost word for word the proof of Proposition \ref{pos1}, we get

\begin{prop} Let $p$ and  $p^\prime = (p^\prime_1, p^\prime_2, p^\prime_3)$ as above.
Let  $v$ and $v^\prime$ be their lifts such that the Gram matrices $G=(g_{ij})= (\herm{v_i,v_j})$ and $G^\prime=(g^\prime_{ij})= (\herm{v^\prime_i,v^\prime_j})$ associated to $p$ and $p^\prime$ are complex normalized.
Then $p$ and $p^\prime$ are equivalent relative to the diagonal action of ${\rm PU}(n,1; \Q)$ if and only if $G=G^\prime$ or $\overline{G}=G^\prime$.
\end{prop}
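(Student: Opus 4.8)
The plan is to follow, almost verbatim, the proof of Proposition \ref{pos1}, using two facts already in place: every choice of lifts $v_1,v_2,v_3$ spans a regular subspace of $\Q^{n,1}$ (remarked just above the statement), so Lemma \ref{gram3} applies; and both $G$ and $G'$ are in the complex normal form $g_{11}=g_{22}=0$, $g_{33}=g_{12}=g_{13}=1$, whose only free entry is the complex number $g_{23}$. I would prove the two implications separately.

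For sufficiency, suppose first that $G=G'$. Since $v_1,v_2,v_3$ and $v'_1,v'_2,v'_3$ span regular subspaces with identical Gram matrices, Lemma \ref{gram3} yields a linear isometry $L\in{\rm U}(n,1; \Q)$ with $L(v_i)=v'_i$, and its projectivization in ${\rm PU}(n,1; \Q)$ carries $p$ to $p'$. Suppose next that $\bar G=G'$. Here I would use the semilinear map $L_j\colon v\mapsto jvj^{-1}$, whose projectivization agrees with that of the linear unitary map $v\mapsto jv$ and hence lies in ${\rm PU}(n,1; \Q)$. Because the chosen lifts have complex coordinates, the relation $jzj^{-1}=\bar z$ for $z\in\C$ gives $\herm{L_j(v_k),L_j(v_l)}=\overline{\herm{v_k,v_l}}$, exactly as in Proposition \ref{pos1}, so the lifts $L_j(v_i)$ of $L_j(p_i)$ have Gram matrix $\bar G=G'$. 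Applying the case $G=G'$ to $L_j(p)$ and $p'$ gives $L_j(p)\sim p'$, and since $L_j(p)\sim p$ we obtain $p\sim p'$.

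For necessity, suppose some $f\in{\rm PU}(n,1; \Q)$ satisfies $f(p_i)=p'_i$, and lift it to $\tilde f\in{\rm U}(n,1; \Q)$. Then each $\tilde f(v_i)$ is a lift of $p'_i$, so $\tilde f(v_i)=v'_i\mu_i$ with $\mu_i\in\Q\setminus\{0\}$, and comparing Gram matrices gives $g_{kl}=\bar\mu_k\,g'_{kl}\,\mu_l$, i.e. $G=D^*G'D$ with $D=\mathrm{diag}(\mu_1,\mu_2,\mu_3)$. Reading off the normalized entries, $g_{33}=|\mu_3|^2=1$ forces $|\mu_3|=1$, while $g_{12}=\bar\mu_1\mu_2=1$ and $g_{13}=\bar\mu_1\mu_3=1$ give $\mu_2=\mu_3$ and, together with $|\mu_3|=1$, also $|\mu_1|=1$; hence $\mu_1=\mu_2=\mu_3=:\mu$ is a unit quaternion. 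The $(2,3)$-entry then reads $g_{23}=\bar\mu\,g'_{23}\,\mu=\mu^{-1}g'_{23}\mu$, so $g_{23}$ and $g'_{23}$ are similar quaternions; by Proposition \ref{qsim} they share real part and modulus, and since both are complex this leaves precisely $g'_{23}=g_{23}$ or $g'_{23}=\overline{g_{23}}$, that is $G=G'$ or $\bar G=G'$.

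The sufficiency is the routine half: its only delicate point, the conjugation identity for $L_j$, is already carried out in Proposition \ref{pos1}. The real content sits in the necessity, where one must verify that the diagonal congruence $G=D^*G'D$, once constrained by the three normalized entries, collapses to a single unit-quaternion conjugation on the lone free entry $g_{23}$; the two-fold ambiguity $G$ versus $\bar G$ then comes out of the similarity dichotomy of Proposition \ref{qsim}. I expect the bookkeeping with the scalars $\mu_i$ to be the one step that needs care, the remainder being a transcription of the positive-points argument.
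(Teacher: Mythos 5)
Your proof is correct, and its sufficiency half is exactly the paper's argument: the paper disposes of this proposition by noting that any lifts of two isotropic points and a positive point span a regular subspace and then ``repeating almost word for word the proof of Proposition \ref{pos1}'' --- i.e.\ Lemma \ref{gram3} for $G=G^\prime$ and the semilinear map $L_j(v)=jvj^{-1}$ (whose projectivization coincides with that of the unitary map $v\mapsto jv$) for $\overline{G}=G^\prime$, just as you do. Where you genuinely go beyond the paper is the necessity direction: the paper's proof of Proposition \ref{pos1}, and hence its proof here, only establishes that $G=G^\prime$ or $\overline{G}=G^\prime$ implies congruence, leaving the converse implicit. Your bookkeeping --- writing $G=D^{*}G^\prime D$ with $D=\mathrm{diag}(\mu_1,\mu_2,\mu_3)$, using the normalized entries $g_{33}=1$, $g_{12}=g_{13}=1$ to force $\mu_1=\mu_2=\mu_3=\mu$ with $\mod{\mu}=1$, and then reading off that $g_{23}$ and $g^\prime_{23}$ are similar complex numbers, hence equal or conjugate by Proposition \ref{qsim} --- is exactly the missing argument that the complex normal form is unique up to conjugation, and it is sound. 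So your write-up is a faithful transcription of the paper's method plus a completion of the ``only if'' half that the paper omits.
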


\medskip

Again this justifies the following definition.

\medskip

Let $p=(p_1, p_2, p_3)$ be a triple of points as above.
Then the {\sl quaternionic $\eta$-invariant},  $\eta =\eta (p)$, is defined to be the unique complex number $b=b_0 +b_1$ with $b_1 \geq 0$,
in the similarity class of $\eta(v_1,v_2, v_3)$.

\medskip

\begin{theorem}\label{mix1} Let $p$ and  $p^\prime$ as above. Then $p$ and $p^\prime$ are equivalent relative to the diagonal action of  ${\rm PU}(n,1; \Q)$
if and only if $\eta (p) =\eta (p^\prime)$.
\end{theorem}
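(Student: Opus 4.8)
The plan is to reduce the congruence problem to the equality of complex normalized Gram matrices, exactly as in the preceding proposition, and then show that the quaternionic $\eta$-invariant faithfully encodes that Gram matrix up to the conjugation ambiguity. First I would observe that, by Corollary \ref{triples1} and Proposition \ref{compt}, both triples $p$ and $p^\prime$ can be placed in the canonical $\P\C^2\subset\P\Q^n$, so their lifts may be chosen with complex coordinates and their associated Gram matrices reduced to complex normal form. In that normal form the only free datum is the single complex entry $g_{23}$, since $g_{11}=g_{22}=0$ and $g_{12}=g_{13}=g_{33}=1$. A direct computation from the definition $\eta(v_1,v_2,v_3)=\herm{v_1,v_3}\herm{v_3,v_2}\herm{v_1,v_2}^{-1}\herm{v_3,v_3}^{-1}$ on these normalized vectors gives $\eta(v_1,v_2,v_3)=\overline{g_{23}}$ (or $g_{23}$, depending on the convention), so the complex number $g_{23}$ and the similarity-class representative defining $\eta(p)$ determine each other.

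The forward direction is immediate: if $p$ and $p^\prime$ are ${\rm PU}(n,1;\Q)$-congruent, then $\eta$, being built from invariant Hermitian products and taking values in a similarity class, is preserved, and its canonical complex representative with non-negative imaginary part is therefore equal for $p$ and $p^\prime$; hence $\eta(p)=\eta(p^\prime)$. The substance is the converse. Assuming $\eta(p)=\eta(p^\prime)$, I would argue that the two complex normalized Gram matrices $G$ and $G^\prime$ must have $g_{23}$ and $g^\prime_{23}$ that are complex-similar with matching canonical representatives; since similarity among complex numbers is precisely complex conjugation (Proposition \ref{qsim} and Corollary \ref{qua1}), this forces either $g_{23}=g^\prime_{23}$ or $g_{23}=\overline{g^\prime_{23}}$, i.e. $G=G^\prime$ or $\overline{G}=G^\prime$. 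Invoking the previous proposition then yields that $p$ and $p^\prime$ are congruent under ${\rm PU}(n,1;\Q)$.

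The main obstacle I anticipate is bookkeeping the conjugation ambiguity cleanly. Because the lift of the isotropic point $p_1$ can only be changed by $\lambda_1$ up to the similarity relation $\eta\mapsto \tfrac{\bar\lambda_1}{|\lambda_1|}\,\eta\,\tfrac{\lambda_1}{|\lambda_1|}$, the invariant $\eta(p)$ lives in a similarity class rather than being a single quaternion, and the passage to the canonical complex representative with $b_1\geq 0$ is exactly what absorbs the $G\leftrightarrow\overline{G}$ ambiguity appearing in the preceding proposition. I would make sure that the equality $\eta(p)=\eta(p^\prime)$ of canonical representatives is correctly translated into ``$g_{23}$ equals $g^\prime_{23}$ or its conjugate,'' so that both cases of the previous proposition are genuinely covered and neither is lost. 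Once that correspondence is pinned down, the theorem follows formally, so I would keep the computation of $\eta$ on the normalized vectors explicit but short, and lean on Proposition \ref{qsim}, Corollary \ref{qua1}, and the previous proposition for everything else.
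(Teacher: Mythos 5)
Your proposal is correct and follows essentially the same route as the paper: reduce to the complex normalized Gram matrix via Proposition \ref{compt}, observe that $\eta(v_1,v_2,v_3)=\overline{g_{23}}$ so that $\eta(p)$ determines $g_{23}$ up to complex conjugation, and conclude by the preceding proposition that $G=G^\prime$ or $\overline{G}=G^\prime$ gives congruence. If anything, your explicit handling of the $G$ versus $\overline{G}$ ambiguity is more careful than the paper's compressed statement ``this implies $G=G^\prime$.''
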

\begin{proof} It follows from the above that we can choose lifts $v=(v_1,v_2, v_3)$ and $v^\prime = (v^\prime_1,v^\prime_2,v^\prime_3)$ of
$p=(p_1,p_2,p_3)$ and $p^\prime = (p^\prime_1, p^\prime_2,p^\prime_3)$ such that the Gram matrices $G$ and $G^\prime$ associated to $p$ and $p^\prime$ defined by $v$ and $v^\prime$
are complex normalized. Then it follows from the definition of the Gram matrix that $g_{23} = \overline{\eta (p)}$. This implies that $G =G^\prime$. Now the proof follows from Lemma \ref{gram1}.
\end{proof}

\medskip

The case when $p_3$ is negative is similar. The $\eta$-invariant is defined by the same formula and  the proof of the congruence theorem is a slight modification of Theorem \ref{mix1}.
Therefore, we have the following theorem.

\begin{theorem}\label{mix2} Let $p=(p_1, p_2, p_3)$, where $p_1, p_2$ are isotropic and $p_3$ is negative. Then the congruence class of $p$  relative to the diagonal action of  ${\rm PU}(n,1; \Q)$
is completely defined by the Goldman invariant $\eta (p)$.
\end{theorem}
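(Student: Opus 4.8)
The plan is to mirror the proof of Theorem \ref{mix1} almost verbatim, adjusting only the normalization of the single negative vector. Let $p=(p_1,p_2,p_3)$ and $p^\prime=(p_1^\prime,p_2^\prime,p_3^\prime)$ be two such triples, where $p_1,p_2$ are isotropic and $p_3$ is negative. As before, I would first invoke Proposition \ref{compt} to place all three points, up to the action of ${\rm PU}(n,1;\Q)$, in the canonical complex submanifold $\P\C^2\subset\P\Q^n$, so that lifts $v=(v_1,v_2,v_3)$ with complex-valued coordinates exist. The $\eta$-invariant is defined by exactly the same formula $\eta(v_1,v_2,v_3)=\herm{v_1,v_3}\herm{v_3,v_2}\herm{v_1,v_2}^{-1}\herm{v_3,v_3}^{-1}$, and the same scaling computation shows it is independent of the lifts of $p_2,p_3$ and changes by a similarity when the lift of $p_1$ is changed; hence the quaternionic $\eta(p)$, taken as the unique complex number $b=b_0+b_1i$ with $b_1\ge 0$ in the similarity class, is well defined.

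The only genuine change is in producing the complex normal form of the Gram matrix $G=(g_{ij})=(\herm{v_i,v_j})$. Now $g_{11}=g_{22}=0$ as before, but since $v_3$ is negative we have $g_{33}=\herm{v_3,v_3}<0$; rescaling $v_3$ by $a=1/\sqrt{-g_{33}}$ (a real number, so that $\herm{v_3a,v_3a}=a^2g_{33}$) normalizes $g_{33}=-1$ rather than $+1$. The remaining rescalings are identical: replace $v_2$ to arrange $g_{12}=1$, then adjust $v_1$ and $v_2$ to obtain $g_{13}=1$ while keeping $g_{12}=1$, leaving $g_{23}$ an arbitrary complex number. This yields a complex normalized Gram matrix in which $g_{23}=\overline{\eta(p)}$, exactly as in the isotropic-positive case.

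With the normal form in hand, the congruence argument is structurally identical to Theorem \ref{mix1}. One shows that $p$ and $p^\prime$ are ${\rm PU}(n,1;\Q)$-equivalent if and only if their complex normalized Gram matrices satisfy $G=G^\prime$ or $\overline{G}=G^\prime$, using the semilinear map $L_j(v)=jvj^{-1}$ as in the proof of Proposition \ref{pos1} to realize conjugation of the Gram matrix by an element of the isometry group. Passing to the similarity class, $G=G^\prime$ or $\overline{G}=G^\prime$ holds precisely when $g_{23}$ and $g_{23}^\prime$ are similar, i.e.\ when $\eta(p)=\eta(p^\prime)$, and the ``only if'' direction is clear since $\eta$ is an invariant.

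The step requiring the most care is the regularity hypothesis needed to invoke Lemma \ref{gram3} and Corollary \ref{gramii}, which underlie Proposition \ref{pos1}. I expect the main point to verify is that the space spanned by $v_1,v_2,v_3$ is regular: with two null vectors and one negative vector whose pairings are normalized as above, the determinant of $G$ must be shown nonzero (equivalently, no nonzero vector in the span is orthogonal to the whole span), so that the Witt-type extension applies and the normalized Gram matrix determines the congruence class uniquely. Granting this, the theorem follows as a slight modification of Theorem \ref{mix1}, exactly as the statement asserts.
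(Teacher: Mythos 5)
Your proposal matches the paper's approach exactly: the paper itself establishes Theorem~\ref{mix2} only by declaring it a slight modification of Theorem~\ref{mix1}, and the modification you carry out (normalizing $g_{33}=-1$, keeping $g_{11}=g_{22}=0$ and $g_{12}=g_{13}=1$, and recovering $g_{23}$ from $\eta(p)$ --- though note the sign, $g_{23}=-\overline{\eta(p)}$ rather than $\overline{\eta(p)}$, since now $g_{33}=-1$) is the correct one. The regularity issue you flag as needing care is in fact automatic: by the paper's remark a singular subspace contains no negative vectors, and the span of the lifts contains the negative vector $v_3$.
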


\medskip

We remark that this problem was considered by Cao, \cite{Cao}, see Theorem 1.1 item (ii). In order to describe the congruence class  relative to the diagonal action of  ${\rm PU}(n,1; \Q)$ of a triple  $p=(p_1, p_2, p_3)$,
where $p_1, p_2$ are isotropic and $p_3$ is negative, he used the following invariants: the distance $d$ between the unique quaternionic line $L(p_1, p_2)$ passing through the points $p_1, p_2$
and the point $p_3$, and the Cartan invariant $\A(p)$ of the triple $p$. Then he claimed that the  congruence class  relative to the diagonal action of  ${\rm PU}(n,1; \Q)$ of  $p$
is completely defined by $d$ and $\A(p)$. Below we give an example which shows that this claim is not correct.

\medskip

\noindent {\bf Example I.} Let  $p=(p_1, p_2, p_3)$, where $p_1, p_2$ are isotropic and $p_3$ is negative. Let $L$ be the unique quaternionic line passing through the points $p_1, p_2$.
In our example, we consider the case when the point $p_3$ is contained in $L$. Therefore, in this case, $d=0$ and $\A(p)= \pi/2$ for all triples satisfying our condition. Now we will show
that among such triples there exist triples which are not congruent relative to the diagonal action of  ${\rm PU}(n,1; \Q)$. Since the quaternionic line $L$
is isometric to ${\rm H^{4}_{\R}}$, we have that there exists a totally geodesic submanifold $P$ of $L$ of real dimension $2$ such that $p_1, p_2$ are in the ideal boundary of $P$ and $p_3 \in P$. In fact,
$P$ is a totally geodesic submanifold of ${\rm H^{n}_{\Q}}$ isometric to ${\rm H^{1}_{\C}}$. We consider $P$ as a hyperbolic plane and $p=(p_1, p_2, p_3)$ as a triangle in $P$ with two ideal vertices $p_1, p_2$ and one proper
vertex $p_3$. Let $l$ be the unique geodesic in $P$ defined by $p_1, p_2$. Let $d_l$ be the distance between $l$ and $p_3$. It is well-known from plane hyperbolic geometry, see \cite{Ber}, that a triangle $p=(p_1, p_2, p_3)$ is defined uniquely up to the isometry of $P$ by $d_l$. Therefore, fixing $p_1, p_2$ and moving $p_3$ inside of $P$, we get an infinite family of non-congruent triangles relative to the diagonal action of  ${\rm PU}(n,1; \Q)$ with the same invariants
defined in \cite{Cao}.

\subsubsection{Triangles with one ideal vertex}

Let $p=(p_1, p_2, p_3)$ be a triple of points in $\P\Q^{n}$. As usual, we
will denote by $v$ the triple $v=(v_1,v_2, v_3)$, where $v_i$ is a lift of $p_i$ in $\Q^{n,1}$.

First, we consider a configuration when  $p_1$ is isotropic and  $p_2$, $p_3$ are negative. So, in this case $p_1$ and $p_2$ and $p_3$ represent a triangle in ${\rm H^{n}_{\Q}}$
with one ideal vertex.

\medskip

Let $v_2, v_3$ be negative vectors representing $p_2, p_3$ and $v_1$ be an isotropic vector representing $p_1$.

\medskip

Consider the following quaternion:

$$\eta(v_1,v_2, v_3)= \herm{v_1,v_3}\herm{v_3,v_2}\herm{v_1,v_2}^{-1}\herm{v_3,v_3}^{-1}.$$

It is easy to check that

$$\eta(v_1\lambda_1, v_2\lambda_2, v_3\lambda_3)=$$

$$\frac{\bar{\lambda}_1}{|\lambda_1|}\eta(v_1,v_2, v_3)\frac{\lambda_1}{|\lambda_1|}.$$

\medskip

Therefore, this implies that $\eta(v_1,v_2, v_3)$ is independent of the choices of lifts of $p_2$ and $p_3$, and if we change a lift of $p_1$, we get a similar quaternion.

\medskip

By applying Proposition \ref{compt}, we can assume that $p_1$, $p_2$, $p_3$ lie in a projective submanifold $W \subset \P\Q^{n}$ of complex type of complex dimension 2 passing through the points $p_i$.
Moreover, this submanifold $W$ can be chosen, up to the action of ${\rm PU}(n,1; \Q)$, to be the canonical complex submanifold $ \P\C^{2} \subset \P\Q^{n}$. Therefore, we can assume without loss of
generality that the coordinates of the vectors  $v_1$, $v_2$,  $v_3$ are complex numbers.

\medskip

Let $G=\ (g_{ij}) = (\herm{v_i,v_j})$ be the Gram matrix associated to the points $p_1$, $p_2$, $p_3$. Then $g_{11}=0$, $g_{22} <0$,
and $g_{33}<0$. It is not difficult to show that by appropriate re-scaling we may assume that $g_{11}=0$, $g_{22}=-1$,  $g_{33}=-1$, $g_{12}=1$, $g_{23}=r_{23}> 0$, and $g_{13}$ is an arbitrary complex number.

\medskip

We call such a matrix $G$ a {\sl complex normal form} of Gram matrix associated to $p_1$, $p_2$, $p_3$ . Also, we call $G$ the {\sl complex normalized} Gram matrix.

\medskip

It is clear that  for $p_1$, $p_2$, $p_3$ any vectors $v_1$, $v_2$ and $v_3$ which represent  $p_1$, $p_2$, and $p_3$ generate a regular space in $\Q^{n,1}.$
Therefore, repeating again almost word for word the proof of Proposition \ref{pos1}, we get

\begin{prop} Let $p$ and  $p$ as above. Let  $v$ and $v^\prime$ be their lifts such that the Gram matrices $G=(g_{ij})= (\herm{v_i,v_j})$ and $G^\prime=(g^\prime_{ij})= (\herm{v^\prime_i,v^\prime_j})$ associated to $p$ and $p^\prime$
are complex normalized. Then $p$ and $p^\prime$ are equivalent relative to the diagonal action of ${\rm PU}(n,1; \Q)$ if and only if $G=G^\prime$ or $\overline{G}=G^\prime$.
\end{prop}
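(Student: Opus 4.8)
The plan is to establish both implications, taking the nontrivial "if" direction exactly as in the proof of Proposition~\ref{pos1} and supplying the "only if" direction by a direct analysis of the diagonal equivalence relating two complex normalized Gram matrices.

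First I would treat the case $G = G'$. Since, as noted just above the statement, the vectors $v_1, v_2, v_3$ (and likewise $v_1', v_2', v_3'$) span a regular subspace of $\Q^{n,1}$, and since $\herm{v_i,v_j} = \herm{v_i',v_j'}$ for all $i,j$, Lemma~\ref{gram3} provides a linear isometry $L$ of $\Q^{n,1}$ with $L(v_i) = v_i'$; its projectivization is the desired element of ${\rm PU}(n,1; \Q)$. For the case $\bar G = G'$ I would introduce the semilinear map $L_j(v) = jvj^{-1}$. Because the complex normalization makes every entry $\herm{v_k,v_l}$ a complex number, the computation in Proposition~\ref{pos1} gives $\herm{L_j(v_k),L_j(v_l)} = \overline{\herm{v_k,v_l}}$, so the Gram matrix of $L_j(v_1), L_j(v_2), L_j(v_3)$ equals $\bar G = G'$. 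Applying the first case to these vectors and to $v_1', v_2', v_3'$ produces a linear isometry carrying one triple onto the other. The only point to watch is that $L_j$ is semilinear rather than linear; however $\pi(jvj^{-1}) = \pi(jv)$ and $\mod{j}=1$, so $L_j$ projectivizes to a genuine element of ${\rm PU}(n,1; \Q)$, and composing it with the isometry just produced yields an element of ${\rm PU}(n,1; \Q)$ taking $p$ to $p'$.

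For the converse, suppose $f \in {\rm PU}(n,1; \Q)$ takes $p$ to $p'$. I would lift $f$ to $\tilde f \in {\rm U}(n,1; \Q)$; then $\tilde f(v_i)$ is a lift of $p_i'$, so $\tilde f(v_i) = v_i'\mu_i$ for some $\mu_i \in \Q \setminus \{0\}$, and invariance of the Hermitian form gives $g_{ij} = \bar\mu_i\, g_{ij}'\,\mu_j$. Reading this off the pinned entries of the complex normal form forces the $\mu_i$ to coincide: from $g_{22} = g_{33} = -1$ one gets $\mod{\mu_2} = \mod{\mu_3} = 1$; from $g_{12} = 1$ one gets $\mu_1 = \mu_2$; and from $g_{23} = r_{23} > 0$ with $r_{23}' > 0$ one gets $\mu_3 = \mu_2$ and $r_{23} = r_{23}'$. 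Writing $\mu = \mu_1 = \mu_2 = \mu_3$, a unit quaternion, the remaining relation $g_{13} = \bar\mu\, g_{13}'\,\mu$ shows that the complex numbers $g_{13}$ and $g_{13}'$ are similar; by Proposition~\ref{qsim} similar complex numbers are either equal or conjugate, hence $g_{13} = g_{13}'$ or $g_{13} = \overline{g_{13}'}$. Since all other entries already agree, this is exactly the dichotomy $G = G'$ or $\bar G = G'$.

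The substantive point, and the step I expect to require the most care, is the interplay between noncommutativity and the two realizations of the complex normal form: the conjugating map realizing $G \mapsto \bar G$ is only semilinear, so one must argue projectively to keep it inside ${\rm PU}(n,1; \Q)$, and in the converse one must verify that the quaternionic freedom in the lifts $\mu_i$ collapses to a single unit quaternion $\mu$, so that the residual ambiguity in $g_{13}$ is precisely conjugation. Everything else is the routine re-scaling already carried out for Proposition~\ref{pos1}.
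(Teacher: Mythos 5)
Your proposal is correct, and the ``if'' direction follows exactly the paper's route: the paper proves this proposition by saying it repeats, almost word for word, the proof of Proposition~\ref{pos1} --- i.e.\ Lemma~\ref{gram3} applied to the regular span when $G=G'$, and the semilinear conjugation $L_j(v)=jvj^{-1}$ (which, as you note and as Section~\ref{lmaig} explains, projectivizes to the linear map $v\mapsto jv$ and hence lies in ${\rm PU}(n,1;\Q)$) to reduce $\bar G=G'$ to the first case. Where you genuinely go beyond the paper is the converse: the proof of Proposition~\ref{pos1} only establishes sufficiency, leaving necessity implicit, whereas you pin down the scalars $\mu_i$ from the normalized entries ($|\mu_2|=|\mu_3|=1$ from the diagonal, $\mu_1=\mu_2$ from $g_{12}=1$, $\mu_3=\mu_2$ from $g_{23}=r_{23}>0$) and then invoke Proposition~\ref{qsim} to conclude that the one free entry $g_{13}$ is determined up to complex conjugation. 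This computation is correct and is exactly what is needed to make the ``only if'' half rigorous; it is the more careful of the two arguments.
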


Again this justifies the following definition.

\medskip

Let $p=(p_1, p_2, p_3)$ be a triple of points as above.
Then the {\sl quaternionic $\eta$-invariant},  $\eta =\eta (p)$, is defined to be the unique complex number $b=b_0 +b_1$ with $b_1 \geq 0$,
in the similarity class of $\eta(v_1,v_2, v_3)$.

\medskip

\begin{theorem}\label{mix3} Let $p=(p_1, p_2, p_3)$ and  $p^\prime = (p^\prime_1, p^\prime_2, p^\prime_3)$ as above. Then $p$ and $p^\prime$ are equivalent relative to the diagonal action of  ${\rm PU}(n,1; \Q)$
if and only if $\eta (p) =\eta (p^\prime)$ and $d(p_2, p_3)= d(p^\prime_2, p^\prime_3$).
\end{theorem}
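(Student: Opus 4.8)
The plan is to mirror the proof of Theorem \ref{pos2}: reduce both triples to a complex normalized Gram matrix, show that the entries of this matrix are recovered from the pair of invariants $\eta(p)$ and $d(p_2,p_3)$ up to complex conjugation, and then invoke the proposition immediately preceding this theorem. The ``only if'' direction is immediate, since both $\eta$ and $d(p_2,p_3)$ are, by construction, invariant under the diagonal action of ${\rm PU}(n,1; \Q)$, so congruent triples necessarily have equal invariants.

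For the ``if'' direction, by Proposition \ref{compt} I place both $p$ and $p'$ in the canonical submanifold $\P\C^{2}$, so that all Hermitian products become complex, and I choose lifts making the Gram matrices $G=(g_{ij})$ and $G'=(g'_{ij})$ complex normalized: $g_{11}=0$, $g_{22}=g_{33}=-1$, $g_{12}=1$, $g_{23}=r_{23}>0$, and $g_{13}\in\C$ arbitrary (and likewise for $G'$). From the definition of the $d$-invariant,
$$d(p_2,p_3)=\frac{\herm{v_2,v_3}\herm{v_3,v_2}}{\herm{v_2,v_2}\herm{v_3,v_3}}=\frac{r_{23}^2}{(-1)(-1)}=r_{23}^2,$$
so $r_{23}=\sqrt{d(p_2,p_3)}$ is recovered from $d(p_2,p_3)$, and the hypothesis $d(p_2,p_3)=d(p_2',p_3')$ gives $r_{23}=r_{23}'$. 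From the definition of $\eta$, in this normalization,
$$\eta(v_1,v_2,v_3)=\herm{v_1,v_3}\herm{v_3,v_2}\herm{v_1,v_2}^{-1}\herm{v_3,v_3}^{-1}=-g_{13}\,r_{23},$$
a complex number whose associated invariant $\eta(p)$ is the representative of its similarity class having non-negative imaginary part. Hence $\eta(p)$ together with $r_{23}>0$ determines the unordered pair $\{g_{13},\overline{g_{13}}\}$.

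Combining the two computations, the hypotheses $\eta(p)=\eta(p')$ and $d(p_2,p_3)=d(p_2',p_3')$ force $r_{23}=r_{23}'$ and $g_{13}'\in\{g_{13},\overline{g_{13}}\}$. Since every other entry of the complex normalized Gram matrix is real and already equal in $G$ and $G'$, replacing $g_{13}$ by $\overline{g_{13}}$ conjugates the whole matrix; hence $G'=G$ or $G'=\overline{G}$. By the proposition preceding this theorem, this is precisely the condition for $p$ and $p'$ to be congruent relative to the diagonal action of ${\rm PU}(n,1; \Q)$, which completes the argument.

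The point to get right — rather than a deep obstacle — is the bookkeeping that links the two ambiguities present in the set-up: $\eta(p)$ is defined only up to passing to the complex representative of a similarity class (equivalently, up to complex conjugation), while the complex normalized Gram matrix is itself defined only up to the conjugation $G\mapsto\overline{G}$ induced by the semilinear map $v\mapsto jvj^{-1}$ used in the preceding proposition. One must verify that these two ambiguities are compatible, so that the pair $(\eta(p),d(p_2,p_3))$ is a complete invariant exactly modulo $G\mapsto\overline{G}$; confirming this compatibility is the crux of the proof.
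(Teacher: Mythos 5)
Your proposal is correct and follows essentially the same route as the paper: pass to the canonical $\P\C^{2}$ via Proposition \ref{compt}, put both Gram matrices in complex normal form, recover the entries from $d(p_2,p_3)$ and $\eta(p)$, and conclude via the preceding congruence proposition. In fact you are slightly more careful than the paper's own proof, which states $\eta(p)=g_{13}$ (rather than the correct $\eta(v_1,v_2,v_3)=-g_{13}r_{23}$) and asserts $G=G'$ outright, whereas your bookkeeping of the conjugation ambiguity, yielding $G'=G$ or $G'=\overline{G}$, is exactly what the preceding proposition requires.
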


\begin{proof} It follows from the above that we can choose lifts $v=(v_1, v_2, v_3)$ and $v^\prime = (v^\prime_1, v^\prime_2, v^\prime_3)$ of
$p=(p_1, p_2, p_3)$ and $p=(p^\prime_1, p^\prime_2, p^\prime_3)$ such that the Gram matrices $G$ and $G^\prime$ associated to $p$ and $p\prime$ defined by $v$ and $v^\prime$
are complex normalized. Then it follows from the definition of the Gram matrix that $\eta (p) = g_{13}$ and $r_{23}= \sqrt{d(p_2, p_3)}.$  This implies that $G =G^\prime$.
Now the proof follows from  Lemma \ref{gram1}.
\end{proof}

\medskip

It follows from this theorem that the congruence class relative to the diagonal action of  ${\rm PU}(n,1; \Q)$  of $p=(p_1, p_2, p_3)$ is defined $\eta (p)$ and the distance between $p_2$ and $p_3$.

\medskip

The case when $p_2$, $p_3$ are positive  is similar. The $\eta$-invariant is defined by the same formula and  the proof of the congruence theorem is a slight modification of Theorem \ref{mix3}. We only remark that geometrically this configuration is equivalent to one isotropic point and two totally geodesic quaternionic hyperplanes in $\rm H^{n}_{\Q}$.

\medskip

We remark that the case when $p_1$ is isotropic and $p_2, p_3$ are negative was considered by Cao \cite{Cao}, see Theorem 1.1 item (iii). In order to describe the congruence class  relative to the diagonal action of  ${\rm PU}(n,1; \Q)$ of a triple  $p=(p_1, p_2, p_3)$, where $p_1$ is isotropic and $p_2,p_3$ are negative, he used the following invariants: the distance $d_1$ between the unique quaternionic line $L(p_1, p_2)$ passing through
the points $p_1, p_2$ and  $p_3$, the distance $d_2$  between the unique quaternionic line $L(p_1, p_3)$ passing through the points $p_1, p_3$ and  $p_2$, and the distance $d_3$ between the points $p_2, p_3$.  Then he claimed that the  congruence class  relative to the diagonal action of  ${\rm PU}(n,1; \Q)$ of a triple  $p$ is completely defined by $d_1, d_2$ and $d_3$. Below we give an example which shows that this claim is not correct.

\medskip

\noindent {\bf Example II.} This example is similar to Example I above. Let  $p=(p_1, p_2, p_3)$, where $p_1$ is isotropic and $p_2, p_3$ are negative.
In our example, we consider the case when the points $p_1, p_2, p_3$ are contained in a quaternionic line $L$. Therefore, in this case, $d_1=0$ and $d_2=0$ for all triples satisfying our condition. Now we will show
that among such triples there exist triples which are not congruent relative to the diagonal action of  ${\rm PU}(n,1; \Q)$. Since the quaternionic line $L$
is isometric to ${\rm H^{4}_{\R}}$, we have that there exists a totally geodesic submanifold $P$ of $L$ of real dimension $2$ such that $p_1$ is in the ideal boundary of $P$, and $p_2, p_3 \in P$. As in the example above, we have that
$P$ is a totally geodesic submanifold of ${\rm H^{n}_{\Q}}$ isometric to ${\rm H^{1}_{\C}}$. We consider $P$ as a hyperbolic plane and $p=(p_1, p_2, p_3)$ as a triangle in $P$ with one ideal vertex $p_1$ and two proper vertices $p_2, p_3$.
It is well-known from plane hyperbolic geometry, see \cite{Ber}, that a triangle $p=(p_1, p_2, p_3)$ is defined uniquely up to the isometry
of $P$ by $d_3$ and by its angles at the vertices $p_2, p_3$.  Therefore, fixing $p_2, p_3$ and moving $p_1$ along the ideal boundary of $P$, we get an infinite family of triangles with the same $d_3$ and with different angles at the vertices $p_2, p_3$. This implies that there exist an infinite family of  non-congruent triangles  relative to the diagonal action of  ${\rm PU}(n,1; \Q)$ with the same invariants defined in \cite{Cao}.

\subsubsection{Triangles with one negative vertex and two positive vertices}

Let $p=(p_1, p_2, p_3)$ be a triple of points in $\P\Q^{n}$. In this section, we consider a configuration when  $p_1$ is negative, and  $p_2$, $p_3$ are positive. So, in this case $p_1$ represents a point in ${\rm H^{n}_{\Q}},$
and $p_2$ and $p_3$ represent two totally geodesic quaternionic hyperplane $H_2$ and $H_3$ in $\rm H^{n}_{\Q}$.

\medskip
Let $v_1$ a negative  vector representing $p_1$ and $v_2$, $v_3$ be positive vectors representing $p_2, p_3$.

\medskip

Let $\eta(v_1,v_2, v_3)$ be following quaternion:
$$\eta(v_1,v_2, v_3)= \herm{v_1,v_3}\herm{v_3,v_2}\herm{v_1,v_2}^{-1}{ \herm{v_3,v_3}}^{-1}.$$

\medskip

We see that $\eta(v_1,v_2, v_3)$ is not well-defined when the points $p_1$ and $p_2$ are orthogonal, that is,  $\herm{v_1,v_2}=0$. It follow from the definition of polar points
that $p_1$ is orthogonal to  $p_2$ if and only if $p_1 \in H_2$. Also we see that $\eta(v_1,v_2, v_3)=0$ when $p_1$ is orthogonal to $p_3$ or $p_2$ is orthogonal to $p_3$.
We analyze  all these special configurations in the end of this section and show that in all these cases we need less invariants to describe the congruence class then in generic case.

\medskip

We say that a triple $p=(p_1, p_2, p_3)$ as above is {\sl generic} if and only if  all the pairs $(p_i, p_j)$ are not orthogonal, $i\neq j$.

\medskip

In what follows, let $p=(p_1, p_2, p_3)$ be a generic triple as above. It is easy to check that
$$\eta(v_1\lambda_1, v_2\lambda_2, v_3\lambda_3)= \frac{\bar{\lambda}_1}{|\lambda_1|}\eta(v_1,v_2, v_3)\frac{\lambda_1}{|\lambda_1|}.$$
Therefore, this implies that $\eta(v_1,v_2, v_3)$ is independent of the choices of lifts of $p_2$ and $p_3$, and if we change a lift of $p_1$, we get a similar quaternion.

\medskip

By applying Proposition \ref{compt} again, we can assume that $p_1$, $p_2$, $p_3$ lie in a projective submanifold $W \subset \P\Q^{n}$ of complex type of complex dimension $2$.
Moreover, this submanifold $W$ can be chosen, up to the action of ${\rm PU}(n,1; \Q)$, to be the canonical complex submanifold $ \P\C^{2} \subset \P\Q^{n}$.
Therefore, we can assume without loss of generality that the coordinates of the vectors  $v_1$, $v_2$,  $v_3$ are complex numbers.

\medskip

Let $G=\ (g_{ij}) = (\herm{v_i,v_j})$ be the Gram matrix associated to the triple of points $(p_1$,\  $p_2$,\  $p_3$). Then $g_{11}<0$,\  $g_{22} >0$,
and $g_{33}>0$. It is not difficult to show that by appropriate re-scaling we may assume that $g_{11}=-1$,\  $g_{22}=1$,\   $g_{33}=1$, $g_{12}=r_{12}>0$,\  $g_{13}=r_{13}> 0$,
and $g_{23}$ is an arbitrary complex number.

\medskip

We call such a matrix $G$ a {\sl complex normal form} of Gram matrix associated to $(p_1$,\  $p_2$,\ $p_3)$. Also, we call $G$ the {\sl complex normalized} Gram matrix.

\medskip

As before we have

\begin{prop} Let $p$ and  $p^\prime)$ as above. Let  $v$ and $v^\prime$ be their lifts such that the Gram matrices $G=(g_{ij})= (\herm{v_i,v_j})$
and $G^\prime=(g^\prime_{ij})= (\herm{v^\prime _i,v^\prime _j})$ associated to $p$ and $p^\prime$
are complex normalized. Then $p$ and $p^\prime$ are equivalent relative to the diagonal action of ${\rm PU}(n,1; \Q)$ if and only if $G=G^\prime$ or $\overline{G}=G^\prime$.
\end{prop}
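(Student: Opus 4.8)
The plan is to mirror the proof of Proposition \ref{pos1} almost verbatim, the only structural input being the observation recorded just above that any lifts $v_1,v_2,v_3$ of $p_1,p_2,p_3$ span a \emph{regular} subspace of $\Q^{n,1}$, so that Lemma \ref{gram3} is available. I would treat the two implications in turn.

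For the sufficiency I would first dispose of the case $G=G^\prime$: since the spaces spanned by $v$ and by $v^\prime$ are regular and carry identical Gram data, Lemma \ref{gram3} supplies a linear isometry $L$ of $\Q^{n,1}$ with $L(v_i)=v_i^\prime$, whose projectivization realizes the congruence. For the case $\overline{G}=G^\prime$ I would reuse the device from Proposition \ref{pos1}: introduce the semilinear map $L_j(v)=jvj^{-1}$ and record the identity $\herm{L_j(v_k),L_j(v_l)}=\overline{\herm{v_k,v_l}}$ (using $jzj^{-1}=\bar z$ for $z\in\C$ and $j^{-1}=-j$), so that the vectors $L_j(v_k)$ have Gram matrix $\overline{G}=G^\prime$ and the previous case applies. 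Since $L_j$ descends to an element of ${\rm PU}(n,1;\Q)$, composing the two maps yields the required congruence; the regularity hypothesis needed to invoke Lemma \ref{gram3} for the $L_j(v_k)$ is automatic, as they share the Gram matrix of the $v_i^\prime$.

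For the necessity, suppose $p$ and $p^\prime$ are congruent and lift the isometry to $\tilde g\in{\rm U}(n,1;\Q)$; then $\tilde g(v_i)=v_i^\prime\mu_i$ for nonzero $\mu_i$, and invariance of the form gives $g_{ij}=\bar\mu_i\,g_{ij}^\prime\,\mu_j$, i.e. $G=D^{*}G^\prime D$ with $D=\mathrm{diag}(\mu_1,\mu_2,\mu_3)$. Comparing the diagonal entries forces $|\mu_i|=1$, and comparing the real positive entries $g_{12},g_{13}$ then forces $\mu_1=\mu_2=\mu_3=:\mu$ (a unit quaternion) while reproducing those entries unchanged. The remaining relation $g_{23}=\bar\mu\,g_{23}^\prime\,\mu$, together with $\bar\mu=\mu^{-1}$, exhibits the complex numbers $g_{23}$ and $g_{23}^\prime$ as \emph{similar} quaternions; by Proposition \ref{qsim} they share real part and modulus, and two complex numbers agreeing in both are either equal or conjugate. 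This yields $G=G^\prime$ or $\overline{G}=G^\prime$.

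The sufficiency is routine once Lemma \ref{gram3} and the map $L_j$ are in hand; the step I expect to require the most care is the necessity, where non-commutativity is unavoidable. The crux is that the relation $g_{23}=\bar\mu\,g_{23}^\prime\,\mu$ is exactly a quaternionic similarity, so Proposition \ref{qsim} pins $g_{23}$ down only up to complex conjugation. This is precisely why the complex normal form is unique only modulo $G\mapsto\overline{G}$, and hence why the statement cannot be strengthened to ``$G=G^\prime$''.
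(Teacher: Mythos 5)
Your proposal is correct and follows essentially the same route as the paper, which simply invokes the proof of Proposition \ref{pos1}: Lemma \ref{gram3} for the case $G=G'$, the semilinear map $L_j(v)=jvj^{-1}$ for the case $\overline{G}=G'$, and regularity being automatic here because the span contains the negative vector $v_1$. Your necessity argument (normalizing the diagonal and the real entries to force $\mu_1=\mu_2=\mu_3$ and then reading $g_{23}=\bar\mu\,g_{23}'\,\mu$ as a quaternionic similarity, so Proposition \ref{qsim} pins $g_{23}$ down only up to conjugation) is exactly the reason the dichotomy $G=G'$ or $\overline{G}=G'$ is the right statement, and it fills in a direction the paper leaves implicit.
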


\medskip

Again this justifies the following definition.

\medskip

Let $p=(p_1, p_2, p_3)$ be a triple of points as above.
Then the {\sl quaternionic $\eta$-invariant},  $\eta =\eta (p)$, is defined to be the unique complex number $b=b_0 +b_1$ with $b_1 \geq 0$ in the similarity class of $\eta(v_1,v_2, v_3)$.

\medskip

Now we define one more invariant. Let $p$ be a negative point and $q$ be a positive point.
Let $v$ be a vector representing $p$ and $w$ be a vector representing $q$. Then we define

$$d(p,q)=d(v,w) = \frac{\herm{v,w}\herm{w,v}}{\herm{v,v}\herm{w,w}}.$$

It is easy to see that $d(p,q)$ is independent of the chosen
lifts  $v, w$, and that $d(p,q)$ is invariant with respect to the diagonal action of ${\rm PU}(n,1; \Q)$.

\medskip
We call $d(p,q)$ the {\sl mixed distant invariant} associated to the points $p,q$.

\medskip

Below we will show that this invariant defines the
distance $\rho$ from the point $p \in {\rm H^{n}_{\Q}}$ to the totally geodesic quaternionic hyperplane $H$ in $\rm H^{n}_{\Q}$ whose polar point is $q$.

\medskip

\begin{prop} Let $p$ be a negative point and $q$ be a positive point. Let $v$ be a vector representing $p$ and $w$ be a vector representing $q$. Then the distance $\rho$ from the point $p \in {\rm H^{n}_{\Q}}$ to the totally geodesic quaternionic hyperplane $H$ in $\rm H^{n}_{\Q}$ with polar point $q$ is defined by the following equality
$$\sinh ^2(\rho (p, H))= -d(p,q).$$
\end{prop}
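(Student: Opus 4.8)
The plan is to realize $\rho(p,H)$ along the common perpendicular dropped from $p$ to $H$ and to identify its foot with the orthogonal projection of the lift $v$ onto the linear hyperplane $\tilde H$ with $H=\pi(\tilde H)\cap {\rm H^{n}_{\Q}}$. By the description of polar points, $\tilde H = w^\perp$. Since $\herm{-,-}$ is conjugate-linear in the first slot and linear in the second, with scalars acting on the right, the orthogonal projection of $v$ onto the right $\Q$-line spanned by $w$ is $w\,\herm{w,w}^{-1}\herm{w,v}$, so I would set $v_0 = v - w\,\herm{w,w}^{-1}\herm{w,v}$, which lies in $w^\perp=\tilde H$ by construction.

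First I would record the two relevant inner products. Using $\herm{w,v}=\overline{\herm{v,w}}$ and the reality of $\herm{w,w}$, a direct computation gives $\herm{v,v_0} = \herm{v,v} - \mod{\herm{v,w}}^2\herm{w,w}^{-1}$, which is real, and because $v_0\perp w$ one has $\herm{v_0,v_0}=\herm{v,v_0}$. Since $\herm{v,v}<0$ while $\mod{\herm{v,w}}^2\herm{w,w}^{-1}\ge 0$, this shows $\herm{v_0,v_0}<0$, so $p_0=\pi(v_0)$ is indeed a point of $H$.

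Next I would check that $p_0$ is the point of $H$ closest to $p$. For any $u\in w^\perp$ representing a point of $H$ one has $\herm{v,u}=\herm{v_0,u}$, whence the Bergman metric gives $\cosh^2 d(p,\pi(u)) = \mod{\herm{v_0,u}}^2\,(\herm{v,v}\herm{u,u})^{-1}$. The subspace $w^\perp$ carries a form of signature $(n-1,1)$, so the reverse Cauchy--Schwarz inequality $\mod{\herm{v_0,u}}^2\ge \herm{v_0,v_0}\herm{u,u}$ holds for negative vectors, with equality exactly when $u$ is proportional to $v_0$; dividing by the positive quantity $\herm{v,v}\herm{u,u}$ yields $\cosh^2 d(p,\pi(u))\ge \herm{v_0,v_0}\,\herm{v,v}^{-1}$, the minimum being attained at $u=v_0$. (Alternatively, by Proposition \ref{compt} the pair $\{p,q\}$ lies in a canonical complex submanifold, reducing this step to the classical complex hyperbolic computation.)

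Finally I would assemble the identity. Using $\herm{v_0,v_0}=\herm{v,v_0}$ the minimum reads $\cosh^2\rho(p,H) = \herm{v,v_0}\,\herm{v,v}^{-1} = 1 - \mod{\herm{v,w}}^2(\herm{v,v}\herm{w,w})^{-1} = 1 - d(p,q)$, where the last step uses $\herm{v,w}\herm{w,v}=\mod{\herm{v,w}}^2$. Hence $\sinh^2\rho(p,H) = \cosh^2\rho(p,H) - 1 = -d(p,q)$, as claimed. The only genuinely non-routine point is the justification that the distance is realized at the orthogonal projection; everything else collapses to real arithmetic, since $\herm{v,v}$, $\herm{w,w}$ and $\mod{\herm{v,w}}^2$ are all real, so the non-commutativity of $\Q$ never interferes with the final formula.
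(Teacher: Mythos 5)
Your proof is correct, but it takes a genuinely different route from the paper. The paper works in the ball model of Section 1.2.2: after an automorphism it places $H$ as the coordinate hyperplane $\{z_1=0\}\subset\B$, takes the coordinate projection $p^*=(0,z_2,\ldots,z_n)$ of $p=(z_1,\ldots,z_n)$, and evaluates the distance formula explicitly on the standard lifts, reading off $\cosh^2 d(p,p^*)=1-\vert z_1\vert^2(\vert z_1\vert^2+\cdots+\vert z_n\vert^2-1)^{-1}$ and then comparing with $d(p,q)$ for the polar vector $w=(1,0,\ldots,0)$. You instead argue invariantly: you decompose the lift $v$ against the right $\Q$-line spanned by $w$, verify that the component $v_0\in w^\perp$ is negative, and establish that $\pi(v_0)$ realizes the minimum of the distance to $H$ via the reverse Cauchy--Schwarz inequality on the signature-$(n-1,1)$ space $w^\perp$. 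What your approach buys is a complete justification of the one step the paper leaves implicit (``it is easy to see that the distance from $p$ to $H$ is equal to the distance between $p$ and its orthogonal projection''), together with independence from any choice of normal form; what the paper's approach buys is brevity and an explicit coordinate picture of the foot of the perpendicular. Your care with the non-commutativity (keeping $\herm{w,w}^{-1}$ and $\herm{w,v}$ on the correct sides, and noting that all quantities entering the final identity are real) is exactly right, and the reverse Cauchy--Schwarz inequality you invoke holds verbatim for quaternionic Hermitian forms of Lorentzian signature, so there is no gap.
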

\begin{proof} To prove this, we use the ball model:

$$\rm H^{n}_{\Q} = \B = \{z=(z_1, \ldots, z_n) \in \Q^n : \Sigma_{i=1}^n |z_i|^2 < 1\},$$
see Section 1.2.2.

\medskip

In what follows, for any $z \in \B$, we write $Z=(z_1, \ldots, z_n, 1)$ for the standard
lift of $z$ in $ \Q^{n,1}.$

\medskip

Let $H$ be the totally geodesic quaternionic hyperplane in $\rm H^{n}_{\Q}$ with polar point $q$. It is easy to see that the distance from $p$ to
$H$ is equal to the distance between $p$ and its orthogonal projection $p^*$ onto $H$. Applying an automorphism, we may assume that 

$$H= \{z=(0,z_2 \ldots, z_n) \subset \B.$$

Therefore, if $p=(z_1, \ldots, z_n)$, then $p^* = (0,z_2 \ldots, z_n).$

\medskip

So, we have that $d(p,H)=d(p,p^*)$. Then applying the distance formula in  $\rm H^{n}_{\Q}$, 
we get

$$\cosh^2 (d(p,p^*))=  \frac{\herm{Z,Z'}\herm{Z',Z}}{\herm{Z,Z}\herm{Z',Z'}} =$$

$$\frac{(\vert z_2 \vert^2 +\ldots +\vert z_n \vert^2 -1)^2}{(\vert z_1 \vert^2 +\vert z_2 \vert^2 +\ldots + \vert z_n \vert^2 -1)(\vert z_2 \vert^2 +\ldots +\vert z_n \vert^2 -1)}=$$

$$\frac{\vert z_2 \vert^2 +\ldots +\vert z_n \vert^2 -1}{\vert z_1 \vert^2 +\vert z_2 \vert^2 +\ldots + \vert z_n \vert^2 -1}=$$

$$1- \frac{\vert z_1 \vert^2 }{\vert z_1 \vert^2 +\vert z_2 \vert^2 +\ldots + \vert z_n \vert^2 -1.}$$

Then, by taking $w=(1, 0, \ldots, 0)$ as a polar vector to $H$, we get that 

$$\sinh ^2(\rho (p, H))= -d(p,q).$$

\end{proof}

\begin{theorem}\label{243t} Let $p=(p_1, p_2, p_3)$ and  $p^\prime = (p^\prime_1, p^\prime_2, p^\prime_3)$ as above. Then $p$ and $p^\prime$ are equivalent relative to the diagonal action of  ${\rm PU}(n,1; \Q)$
if and only if $\eta (p) =\eta (p^\prime)$,\  $ d(p_1, p_2)= d(p^\prime_1,\  p^\prime_2)$, and  $d(p_1, p_3)= d(p^\prime_1,\  p^\prime_3)$.
\end{theorem}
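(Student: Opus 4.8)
The plan is to mirror the proof of Theorem \ref{mix3}, reducing the congruence question to an identity of complex normalized Gram matrices and then showing that the three stated invariants determine such a matrix up to entrywise complex conjugation. The direction asserting that congruent triples share the invariants is immediate, since $\eta(p)$, $d(p_1,p_2)$ and $d(p_1,p_3)$ are, by their very construction, invariant under the diagonal action of ${\rm PU}(n,1;\Q)$. So the substance of the proof lies in the converse.

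For the converse, I would first invoke Proposition \ref{compt} to select lifts $v=(v_1,v_2,v_3)$ of $p$ and $v^\prime=(v_1^\prime,v_2^\prime,v_3^\prime)$ of $p^\prime$ whose coordinates are complex and whose associated Gram matrices $G=(g_{ij})$ and $G^\prime=(g_{ij}^\prime)$ are complex normalized; thus $g_{11}=-1$, $g_{22}=g_{33}=1$, $g_{12}=r_{12}>0$, $g_{13}=r_{13}>0$, with $g_{23}$ an arbitrary complex number, and likewise for $G^\prime$. A direct computation with these lifts gives $d(p_1,p_2)=-r_{12}^2$ and $d(p_1,p_3)=-r_{13}^2$, so that $r_{12}=\sqrt{-d(p_1,p_2)}$ and $r_{13}=\sqrt{-d(p_1,p_3)}$ are recovered from the mixed distance invariants. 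Since all entries are now complex and hence commute, the same computation yields
$$\eta(v_1,v_2,v_3)=\frac{r_{13}}{r_{12}}\,\overline{g_{23}},$$
a complex number. Its similarity class inside $\Q$ consists precisely of itself and its conjugate, so the normalized invariant $\eta(p)$ recovers $g_{23}$ up to complex conjugation once $r_{12}$ and $r_{13}$ are known.

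Putting this together, equality of the three invariants for $p$ and $p^\prime$ forces $r_{12}=r_{12}^\prime$, $r_{13}=r_{13}^\prime$, and $g_{23}\in\{g_{23}^\prime,\overline{g_{23}^\prime}\}$, that is, $G=G^\prime$ or $\overline{G}=G^\prime$; the proposition proved just above this theorem then yields the desired ${\rm PU}(n,1;\Q)$-congruence of $p$ and $p^\prime$. I expect the only real obstacle to be the careful bookkeeping in the displayed formula for $\eta(v_1,v_2,v_3)$ and in matching the sign ambiguity of the similarity class of $\eta$ to the $G\leftrightarrow\overline{G}$ dichotomy; in particular one must verify that normalizing $\eta(p)$ to have non-negative imaginary part discards exactly the information distinguishing $G$ from $\overline{G}$, and no more. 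No genuinely new geometric input beyond Proposition \ref{compt} and the preceding proposition is needed.
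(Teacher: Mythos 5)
Your proposal follows essentially the same route as the paper's proof: pass to complex normalized Gram matrices via Proposition \ref{compt}, read off $r_{12}$, $r_{13}$ and $g_{23}$ from the three invariants, and conclude by the proposition immediately preceding the theorem. If anything you are slightly more careful than the paper, which asserts $G=G^\prime$ outright and cites Lemma \ref{gram3}; your explicit handling of the $G$ versus $\overline{G}$ ambiguity arising from normalizing $\eta$ to have non-negative imaginary part (and the sign $r_{12}=\sqrt{-d(p_1,p_2)}$) is the correct bookkeeping.
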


\begin{proof} It follows from the above that we can choose lifts $v=(v_1,v_2, v_3)$ and $v^\prime = (v^\prime_1,v^\prime_2,v^\prime_3)$ of
$p=(p_1,p_2,p_3)$ and $p=(p^\prime_1, p^\prime_2, p^\prime_3)$ such that the Gram matrices $G$ and $G^\prime$ associated to $p$ and $p^\prime$ defined by $v$ and $v^\prime$
are complex normalized. Then it follows from the definition of the Gram matrix that $r_{12}= \sqrt{d(p_1, p_2)},$ $r_{13}= \sqrt{d(p_1, p_3)},$ and  $g_{23} = \overline{\eta (p)} (r_{12}/r_{13})$.
This implies that $G =G^\prime$.  Now the proof follows from Lemma \ref{gram3}.
\end{proof}

\medskip

Now we consider some special configurations of triples $p=(p_1, p_2, p_3).$ Let, for instance, $p$ be a configuration where all the pairs $(p_i,p_j)$, $i\neq j$, are orthogonal. In this case,
the totally geodesic quaternionic hyperplanes $H_1$ and $H_2$ with polar points $p_2$ and $p_3$ intersect orthogonally and $p_1$ lies in their intersection. It is clear then that all such configurations are
congruent relative to the diagonal action of ${\rm PU}(n,1; \Q)$. So, the moduli space in this case is trivial. Another example: suppose that $p_1$ is orthogonal to $p_2$, and
$p_1$ is orthogonal to $p_3$. This implies that $H_1$ and $H_2$ intersect, and $p_1$ lies in their intersection. This implies that in order to describe the congruence class of such configuration we need only the d-invariant
of $H_1$ and $H_2$, the angle between $H_1$ and $H_2$. Another special configuration may be analyzed easily  in a similar way.

\subsubsection{Triangles with one positive vertex and two negative vertices}

In this section, we consider a triple $(p_1, p_2, p_3)$, where $p_1$ is positive and $p_2$, $p_3$ are negative.

\medskip

Geometrically, this corresponds to a totally geodesic quaternionic hyperplane $H$ in $\rm H^{n}_{\Q}$ and two points $p_2$ and $p_3$ in $\rm H^{n}_{\Q}$.

\begin{theorem} Let $H$ be a totally geodesic quaternionic hyperplane $H$ in $\rm H^{n}_{\Q}$ with polar point $p_1$,  and $p_2$ and $p_3$ in $\rm H^{n}_{\Q}$.
Suppose that $p=(p_1, p_2,p_3)$ is generic. Then the congruence class of the triple $(H, p_1, p_2)$ is defined uniquely by two distances  $\rho (p_1, H)$, $\rho (p_1, H)$, and
$\eta (p)=\eta (p_1, p_2,p_3)$.
\end{theorem}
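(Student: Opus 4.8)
The plan is to follow exactly the route used for the preceding mixed configurations: reduce the problem to two-dimensional complex hyperbolic geometry, put the Gram matrix of a lift into a complex normal form, and then read off the three invariants directly from its entries. First I would invoke Proposition \ref{compt} to move the ordered triple $p=(p_1,p_2,p_3)$, after applying a suitable element of ${\rm PU}(n,1; \Q)$, into the canonical complex submanifold $\P\C^2\subset\P\Q^n$, so that a lift $v=(v_1,v_2,v_3)$ can be chosen with complex coordinates and with $\herm{v_i,v_j}\in\C$ for all $i,j$. Since $v_2$ and $v_3$ are negative, the subspace they span contains negative vectors and hence is regular (by the remark that a singular subspace contains no negative vectors); consequently the congruence criterion proved in this subsection applies, namely that two such triples are ${\rm PU}(n,1; \Q)$-congruent if and only if their complex normalized Gram matrices satisfy $G=G'$ or $\overline{G}=G'$.

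Next I would carry out the normalization. Rescaling the $v_i$ by real factors gives $g_{11}=1$ and $g_{22}=g_{33}=-1$, and multiplying $v_2,v_3$ by unit complex numbers makes $g_{12}=r_{12}>0$ and $g_{13}=r_{13}>0$ (both nonzero because $p$ is generic), leaving $g_{23}\in\C$ as the only remaining entry. I then translate the invariants into these entries. From the mixed distance invariant, $d(p_1,p_2)=|g_{12}|^2/(g_{11}g_{22})=-r_{12}^2$, so the proposition relating $d$ and $\rho$ gives $\sinh^2\rho(p_2,H)=-d(p_1,p_2)=r_{12}^2$, i.e. $r_{12}=\sinh\rho(p_2,H)$, and likewise $r_{13}=\sinh\rho(p_3,H)$. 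Evaluating the $\eta$-invariant $\eta(v_1,v_2,v_3)=\herm{v_1,v_3}\herm{v_3,v_2}\herm{v_1,v_2}^{-1}\herm{v_3,v_3}^{-1}$ on the normal form (where $\herm{v_3,v_3}=-1$ and $\herm{v_3,v_2}=\overline{g_{23}}$) yields the complex number
\[
\eta(v_1,v_2,v_3)=-\frac{r_{13}}{r_{12}}\,\overline{g_{23}},
\]
so that $g_{23}=-\frac{r_{12}}{r_{13}}\,\overline{\eta(v_1,v_2,v_3)}$.

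Finally I would assemble the conclusion. Knowledge of $\rho(p_2,H)$ and $\rho(p_3,H)$ fixes $r_{12}$ and $r_{13}$ exactly, and knowledge of $\eta(p)$, the representative of the similarity class of $\eta(v_1,v_2,v_3)$ with nonnegative imaginary part, determines $\eta(v_1,v_2,v_3)$ up to complex conjugation, hence $g_{23}$ up to conjugation. Thus equality of the three invariants for $p$ and $p'$ forces their complex normalized Gram matrices to satisfy $G=G'$ or $\overline{G}=G'$, and the congruence criterion closes the argument; note in particular that the distance $d(p_2,p_3)=|g_{23}|^2$ is already recovered and so need not appear as a separate invariant. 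The main obstacle I anticipate is precisely this final bookkeeping with the conjugation ambiguity: one must verify that passing from the quaternion $\eta(v_1,v_2,v_3)$ to its distinguished complex representative $\eta(p)$ discards exactly the data encoded by the replacement $G\mapsto\overline{G}$ (realized, as in Proposition \ref{pos1}, by the semilinear map $v\mapsto jvj^{-1}$) and nothing more, so that the three listed invariants separate ${\rm PU}(n,1; \Q)$-congruence classes without redundancy. Everything else reduces to the routine rescaling and the direct evaluation sketched above.
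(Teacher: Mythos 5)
Your proposal is correct and follows exactly the route the paper intends: the paper's own proof is a one-line reference to the argument for the configuration with one negative and two positive vertices (complex normalization of the Gram matrix, recovery of $r_{12},r_{13}$ from the distance invariants and of $g_{23}$ from $\eta$ up to conjugation, then the $G=G'$ or $\overline{G}=G'$ congruence criterion), and you have simply carried out that ``slight modification'' explicitly. Your sign bookkeeping ($g_{33}=-1$ giving $\eta=-\tfrac{r_{13}}{r_{12}}\overline{g_{23}}$ and $\sinh^2\rho=-d$) and your observation that the conjugation ambiguity in $\eta(p)$ matches the $G\mapsto\overline{G}$ ambiguity are exactly the details the paper leaves implicit.
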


\begin{proof} The proof of this theorem is a slight modification of the proof of Theorem \ref{243t}.
\end{proof}

\section*{Declarations}
\textbf{Conflict of interest} The authors have no conflict of interest to declare that are relevant to this article.


\begin{thebibliography}{88888}


\bibitem{AG} I.R. Almeida, J.L.O. Chamorro, and N. Gusevskii,  On bisectors in quaternionic hyperbolic space. Bull. Braz. Math. Soc. (N.S.) 54 (2023), no. 4, Paper No. 52, 27 pp.

\bibitem{AGG} S. Anan'in, C. Grossi, and N. Gusevskii, Complex hyperbolic structures on disc bundles over surfaces. Internat. Math. Res. Notices 2011 (2011), no. 19, 4295-4375. 

\bibitem{AK} B.N. Apanasov and I. Kim, Cartan angular invariant and deformations of rank 1 symmetric
spaces. Sb. Math. 198 (2007), 147-169.

\bibitem{artin} E. Artin, Geometric algebra, Interscience Publishers, Inc., New York, 1957.

\bibitem{Ber} A.F. Berdon, The geometry of Discrete Groups. Springer- Verlag, Berlin, 1983. xx+304 pp.

\bibitem{Bre} U. Brehm,  The shape invariant of triangles and trigonometry in
two-point homogeneous spaces. Geom. Dedicata 33 (1990), no. 1,
59--76.

\bibitem{BrEt} U. Brehm, B. Et-Taoui, Congruence criteria for finite
subsets of complex projective and complex hyperbolic spaces.
Manuscripta Math. 96 (1998), no. 1, 81--95.

\bibitem{Bren} J.L. Brenner, Matices of quaternions. Pacific J. Math. 1 (1950), 329--335.

\bibitem{Cao} W.S. Cao, Congruence classes of points in quaternionic hyperbolic space. Geom. Dedicata 180 (2016), 203-228.

\bibitem{Car} E. Cartan, Sur le groupe de la g\'eom\'etrie hypersph\'erique. Comment. Math. Helv. 4, 158-171 (1932)

\bibitem{CGr} S.S. Chen and L. Greenberg, Hyperbolic spaces. In Contributions to analysis(a collection
of papers dedicated to Lipman Bers), Academic Press, New York (1974), 49--87.

\bibitem{CDGT} H. Cunha, F. Dutenhener, N. Gusevskii, and R. Thebaldi, The moduli space of complex geodesics
in the complex hyperbolic plane. J. Geom. Anal. 22 (2012), no. 2, 295--319.

\bibitem{CG1} H. Cunha,  N. Gusevskii, On the moduli space of quadruples of points in the boundary of complex hyperbolic space.
Transform. Groups. (2010), no.2, 261-283.

\bibitem{CG2}H. Cunha,  N. Gusevskii, The moduli space of points in the boundary of complex hyperbolic space. J. Geom. Anal. 22 (2012), no. 2, 1-11.

\bibitem{Gol} W.M. Goldman, Complex hyperbolic geometry. Oxford Mathematical
Monographs. Oxford Science Publications. The Clarendon Press, Oxford
University Press, New York, 1999. xx+316 pp.

\bibitem{Hof} R. Hofer, m-point invariants of real geometries. Beitr\"{a}ge Algebra Geom. 40 (1999). no. 1, 261-266.

\bibitem{HS1} J. Hakim, H. Sandler, Applications of Bruhat Decompositions to Complex Hyperbolic Geometry. J. Geom. Anal. 10 (2000), no.3, 435-453.

\bibitem{HS2}  J. Hakim, H. Sandler, Standard position for objects in hyperbolic space. J. Geom. 68 (2000), 100-113.

\bibitem{Sch} W. Scharlau, Quadratic and Hermitian forms. Grundlehren der
Mathematischen Wissenschaften [Fundamental Principles of
Mathematical Sciences], 270. Springer-Verlag, Berlin, 1985. xx+421 pp.
pp.

\end{thebibliography}
\end{document}